\documentclass[12pt,reqno]{amsart}
\usepackage{amsmath,amsthm,amsfonts,amssymb,amscd,amstext}
\usepackage[ansinew]{inputenc}
\usepackage{graphicx}
\usepackage{euscript,color}
\usepackage{a4wide}

\usepackage[backref]{hyperref}

\usepackage{pxfonts}   
\numberwithin{equation}{section}

\newcommand{\qand}{\quad\text{and}\quad}

\theoremstyle{plain}
\newtheorem{maintheorem}{Theorem}

\newtheorem{maincorollary}[maintheorem]{Corollary}

\newtheorem{theorem}{Theorem}[section]
\newtheorem{proposition}[theorem]{Proposition}
\newtheorem{corollary}[theorem]{Corollary}
\newtheorem{lemma}[theorem]{Lemma}
\newtheorem{claim}[theorem]{Claim}
\newtheorem{remark}[theorem]{Remark}
\newtheorem{question}[theorem]{Question}
\newtheorem{example}[theorem]{Example}
\theoremstyle{definition}
\newtheorem{definition}{Definition}

\newcommand{\RR}{{\mathbb R}}

\newcommand{\NN}{{\mathbb N}}

\newcommand{\sS}{{\mathbb S}}

\newcommand{\SB}{{\mathcal B}}

\newcommand{\SD}{{\mathcal D}}
\newcommand{\SE}{{\mathcal E}}

\newcommand{\SK}{{\mathcal K}}

\newcommand{\SM}{{\mathcal M}}
\newcommand{\SN}{{\mathcal N}}

\newcommand{\SP}{{\mathcal P}}
\newcommand{\SQ}{{\mathcal Q}}

\newcommand{\SU}{{\mathcal U}}
\newcommand{\SV}{{\mathcal V}}
\newcommand{\SW}{{\mathcal W}}

\newcommand{\diam}{\operatorname{diam}}
\renewcommand{\epsilon}{\varepsilon}
\newcommand{\dist}{\operatorname{dist}}

\newcommand{\interior}{\operatorname{int}}

\newcommand{\esup}{\operatorname{ess\,sup}}

\newcommand{\Leb}{\operatorname{Leb}}

\newcommand{\supp}{\operatorname{supp}}

\title[Pesin's formula for $C^1$ maps]
{Pesin's Entropy Formula for $C^1$ non-uniformly expanding maps}

\date{January 10, 2019}

\author{Vítor Araujo}

 \address[V.A.]{ Instituto de Matem\'atica,
  Universidade Federal da Bahia Av. Ademar de Barros s/n,
  40170-110 Salvador, Brazil.}

\email{vitor.d.araujo@ufba.br}
\urladdr{www.sd.mat.ufba.br/$\sim$vitor.d.araujo}

\author{Felipe Santos}

\address[F.S.]{ Centro de Formação de Professores,
  Universidade Federal do Reconcavo da Bahia, Avenida Nestor
  de Mello Pita 535, 45300-000, Amargosa, BA-Brazil}

\email{felipefonsantos@gmail.com}

\thanks{V.A. was partially supported by CNPq and
  FAPESB-Brazil; and F.S. was partially supported by
  CAPES-Brazil.}


\begin{document}

\keywords{non-uniform expansion, SRB/physical-like measures,
  equilibrium states, Pesin's entropy formula, $C^1$ smooth
  maps, uniform expansion}

\subjclass[2010]
{Primary: 37D25; Secondary: 37D35, 37D20, 37C40.}

\begin{abstract}
  We prove existence of equilibrium states with special
  properties for a class of distance expanding local
  homeomorphisms on compact metric spaces and continuous
  potentials. Moreover, we formulate a C$^1$ generalization
  of Pesin's Entropy Formula: all ergodic weak-SRB-like
  measures satisfy Pesin's Entropy Formula for $C^1$
  non-uniformly expanding maps. We show that for
  weak-expanding maps such that $\Leb$-a.e $x$ has positive
  frequency of hyperbolic times, then all the necessarily
  existing ergodic weak-SRB-like measures satisfy Pesin's
  Entropy Formula and are equilibrium states for the
  potential $\psi=-\log|\det Df|$. In particular, this holds
  for any $C^1$-expanding map and, in this case, the set of
  invariant probability measures that satisfy Pesin's
  Entropy Formula is the weak$^*$-closed convex hull of the
  ergodic weak-SRB-like
  measures. \href{https://rdcu.be/bhnWQ}{Online paper.}
\end{abstract}

\maketitle
\tableofcontents

\section{Introduction}
\label{sec:introd}


Equilibrium states, a concept originating from statistical
mechanics, are special classes of probability measures on
compact metric spaces $X$ that are characterized by a
Variational Principle. In the classical setting, given a
continuous map $T:X\rightarrow X$ on a compact metric space
$X$ and a continuous function $\phi: X\rightarrow \RR$, then
a $T$-invariant probability measure $\mu$ is called
$\phi$-equilibrium state (or equilibrium state for the
potential $\phi$) if
\begin{eqnarray*}
P_{\textrm{top}}(T,\phi)=h_{\mu}(T)+\int\phi\,d\mu, \ \ \textrm{where}\ \ P_{\textrm{top}}(T,\phi)=\sup\limits_{\lambda\in \SM_T}\left\{h_{\lambda}(T)+\int\phi\,d\lambda\right\},
\end{eqnarray*}
and $\SM_T$ is the set of all $T$-invariant probability
measures. The quantity $P_{\textrm{top}}(T,\phi)$ is called
the \emph{topological pressure} and the identity on the
right hand side above is a consequence of the variational
principle, see e.g. \cite{walters2000introduction,PrzyUrb10}
for definitions of entropy $h_{\mu}(T)$ and topological
pressure $P_{\textrm{top}}(T,\phi)$.

Depending on the dynamical system, these measures can have
additional properties. Sinai, Ruelle, Bowen
\cite{Si72,BR75,Bo75,Ru76} were the forerunners of the
theory of equilibrium states of (uniformly hyperbolic)
smooth dynamical systems. They established an important
connection between equilibrium states and the soon to be
called physical/SRB measures. This kind of measures provides
asymptotic information on a set of trajectories that one
expects is large enough to be observable from the point of
view of the Lebesgue measure (or some other relevant
reference measure), that is, a measure $\mu$ is
\emph{physical} or $SRB$ if its \emph{ergodic basin}
\[
B(\mu)=\left\{x\in X:
  \frac{1}{n}\sum_{j=0}^{n-1}\phi(T^jx)\xrightarrow[n\rightarrow+\infty]{}\int\phi
  d\mu, \ \forall \phi\in C^0(X,\RR)\right\}
\]
has positive volume (Lebesgue measure) or other relevant
distinguished measure.

Several difficulties arise when trying to extend
this theory. Despite substantial progress by several
authors, a global picture is still far from complete. For
example, the basic strategy used by Sinai-Ruelle-Bowen was
to (semi)conjugate the dynamics to a subshift of finite
type, via a Markov partition.  However, existence of
generating Markov partitions is known only in few cases and,
often, such partitions can not be finite. Moreover, there
exist transformations and functions admitting no equilibrium
states or SRB measure.

In many situations with some kind of expansion it is
possible to ensure that equilibrium states always
exist. However, equilibrium states are generally not unique
and the way they are obtained does not provide additional
information for the study of the dynamics. In the setting of
uniformly expanding maps, equilibrium states always exist
and they are unique SRB measures if the potential is
H\"older continuous and the dynamics is transitive.

Some natural questions arise: when does a system have an
equilibrium state?  Second, what statistical properties do
those probabilities exhibit with respect to the
(non-invariant) reference measure? Third, when is the
equilibrium state unique?  The existence of equilibrium
states is a relatively soft property that can often be
established via compacteness arguments. Statistical
properties and uniqueness of equilibrium state are usually
more subtle and require a better understanding of the
dynamics. In our setting, we do not expect uniqueness of
equilibrium states because we consider dynamical systems
with low regularity (continuous or of class $C^1$ only) and
only continuous potentials.

From the Thermodynamical Formalism, the answer for the first
question is known to be affirmative for  distance
expanding maps in a compact metric space $X$ and all
continuous potentials (see
\cite{ruelle2004thermodynamic,PrzyUrb10}). Moreover,
inspired by the definition of SRB-like measure given in
\cite{CatsEnrich2011}, which always exists for all continuous
transformations on a compact metric space, we have an answer
to the second question. That is, given a reference measure
$\nu$, there always exist weak-$\nu$-SRB-like and
$\nu$-SRB-like measures (generalizations of the notion of
SRB measure, see the statement of the resuts for more
details).

Our first result shows that, in our context, the
$\nu$-SRB-like measures can be seen as measures that
naturally arise as accumulation points of $\nu_n$-SRB
measures. In addition, for topologically exact distance
expanding maps, the limit measure is a $\nu$-SRB-like
probability measure with full generalized basin.

Moreover, we show that if $T:X\rightarrow X$ is an open
distance expanding topologically transitive map in a compact
metric space $X$ and $\phi:X\rightarrow\RR$ is continuous,
then for each (necessarily existing) conformal measure
$\nu_{\phi}$ all the (necessarily existing)
$\nu_{\phi}$-SRB-like measures are equilibrium states for
the potential $\phi$.

Let now $M$ be a compact boundaryless finite dimensional
Riemannian manifold. In the 1970's, Pesin showed how two
important concepts are related: Lyapunov exponents and
measure-theoretic entropy in the smooth ergodic theory of
dynamical systems. In \cite{Pe77}, Pesin showed that if
$\mu$ is an $f$-invariant measure of a $C^2$ (or
$C^{1+\alpha}$, $\alpha>0$) diffeomorphism of a compact
manifold which is absolutely continuous with respect to the
Lebesgue (volume) reference measure of the manifold, then
$$h_{\mu}(f)=\int\Sigma^+d\mu,$$
where $\Sigma^+$ denotes the sum of the positive Lyapunov
exponents at a regular point, counting multiplicities.

Ledrappier and Young in \cite{LY85} characterized the
measures which satisfy Pesin's Entropy Formula for $C^2$
diffeomorphisms.  Liu, Qian and Zhu \cite{Li98,QZ2002}
generalized Pesin's Entropy Formula for $C^2$ endomorphisms.

There are extensive results concerning Pesin’s entropy
formula, but the vast majority of these results were
obtained under the assumption that the dynamics is at least
$C^{1+\alpha}$ regular.  In fact, there is still a gap
between $C^{1+\alpha}$ and $C^1$ dynamics, despite recent
progress in this direction
\cite{tahzibi2002c1,BesVar2010,CatsEnrich2012,catsigeras2015pesin,catsigeras2016weak,Sun20121421,qiu2011existence,CAO20163964}.

We formulate a $C^1$ generalization of Pesin's Entropy
Formula, for non-uniformly expanding local diffeomorphisms
$f$, obtaining a sufficient condition to ensure that an
$f$-invariant probability measure satisfies Pesin's Entropy
Formula.


Moreover, we study the existence of ergodic weak-SRB-like
measures and some of their properties for dynamics with some
expansion: either non uniformly expanding $C^1$
transformations, with hyperbolic times for Lebesgue almost
every point, or maps which are expanding except at a finite
subset of the ambient space. It is known that not all
dynamics admit ergodic SRB-like measures: see \cite[Example
5.4]{CatsEnrich2011}. Moreover, in \cite{catsigeras2016weak}
Catsigeras, Cerminara, and Enrich show that ergodic SRB-like
measures do exist for $C^1$ Anosov diffeomorfism and, more
recently in \cite{CatsTroub2017}, Catsigeras and Troubetzkoy
show that for $C^0$-generic continuous interval dynamics all
ergodic measures are SRB-like.

\subsection{Statement of results}
\label{sec:statem-results}

Let $T:X\rightarrow X$ be a continuous transformation
defined on a compact metric space $(X, d)$. We present first
some preliminary definitions needed to state the main
results.

\subsubsection{Topological pressure}
\label{sec:topological-pressure}

The \emph{dynamical ball}  centered at $x\in X$, radius
$\delta >0$, and length $n\geq1$ is defined by
$B(x,n,\delta)=\{y\in X: d(T^jx,T^jy)\leq\delta, \ 0\leq j\leq n-1\}$.

Let $\nu$ be a Borel probability measure on $X$. We define
\begin{eqnarray}\label{def-de-entropia-via-supremo-essencial}
  h_{\nu}(T,x)=\lim_{\delta\to
  0}\limsup_{n\to+\infty}-\frac{1}{n}\log\nu(B(x,n,\delta))
  &\textrm{and}&h_{\nu}(T,\mu)=\mu\text{-}\esup h_{\nu}(T,x).
\end{eqnarray}
We note that $\nu$ is not necessarily $T$-invariant. If
$\mu$ is $T$-invariant and ergodic, then we have
$h_{\mu}(T, x) = h_{\mu}(T)$ for $\mu$-a.e. $x\in X$, where
$h_{\mu}(T)$ is the metric entropy of $T$ with respect to
$\mu$.

Let $n$ be a natural number, $\epsilon > 0$ and let $K$ be a
compact subset of $X$. A subset $F$ of $X$ is said to
$(n, \varepsilon)$-span $K$ with respect to $T$ if, for each
$x\in K$, there exists $y\in F$ with
$d(T^jx, T^jy)\leq\varepsilon$ for all $0\leq j\leq n-1$,
that is, $K\subset \bigcup_{x\in F}B(x,n,\varepsilon).$


Given a compact subset $K$ of $X$, we set
\begin{eqnarray}\label{def-entropia-do-conj-K}
  h(T; K) = \lim_{\varepsilon\rightarrow 0}\liminf_{n\rightarrow+\infty}\frac{1}{n}\log N(n, \varepsilon, K),
\end{eqnarray}
where $N(n, \varepsilon, K)$ denotes the smallest
cardinality of any $(n,\varepsilon)$-spanning set for $K$
with respect to $T$.

\begin{definition}\label{def-de-entropia-top-via-conjs-geradores}
  The \emph{topological entropy} of $T$ is
  $h_{top}(T) = \sup_{K}h(T, K)$, where the supremum
  is taken over the collection of all compact subset of $X$.
\end{definition}

Let $\phi: X\to\RR$ be a real continuous function, usually
referred to as \emph{potential}.  Given an open cover
$\alpha$ for $X$ we define the pressure $P_{T}(\phi,\alpha)$
of $\phi$ with respect to $\alpha$ by
\begin{align*}
  P_{T}(\phi,\alpha)
  :=
  \lim_{n\to+\infty}\frac{1}{n}\log\inf_{\SU\subset \alpha^n}\left\{\sum_{U\in\SU}e^{S_n\phi(U)}\right\},
\end{align*}
where the infimum is taken over all subcovers $\SU$ of
$\alpha^n =\bigvee_{j=0}^{n-1}T^{-j}(\alpha)$ and we write
$S_n\phi(x):=\sum_{j=0}^{n-1}\phi(T^jx)$ and
$S_n\phi(U):=\sup_{x\in U}S_n\phi(x)$ in what follows.

\begin{definition}\label{definicao-de-pressao-top-via-cobertura}
  The \emph{topological pressure} $P_{top}(T,\phi)$ of the
  potential $\phi$ with respect to the dynamics $T$ is
  defined by
$$P_{top}(T,\phi)=\lim_{\delta\rightarrow0}\left\{\sup_{|\alpha|\leq\delta}P_T(\phi,\alpha)\right\}$$
where $|\alpha|$ denotes the diameter of the open cover
$\alpha$ and the supremum is taken over all open covers with
diameter at most $\delta$.
\end{definition}

For given $n>0$ and $\varepsilon>0$, a subset $E\subset X$
is called $(n,\varepsilon)$-separated if, for any given pair
$x, y\in E$, $x\neq y$, then there exists $0\leq j\leq n-1$
such that $d(T^jx, T^jy)>\epsilon$.  This provides an
alternative way of defining topological pressure, as follows.

\begin{definition}\label{definicao-de-pressao-top-via-conj-separados}
The topological pressure $P_{top}(T,\phi)$ of the potential $\phi$ with respect to the dynamics $T$ is defined by
$$P_{top}(T,\phi)=\lim_{\varepsilon\rightarrow0}\limsup_{n\rightarrow+\infty}\frac{1}{n}\log\sup\left\{\sum_{x\in E}e^{S_n\phi(x)}\right\}$$
where the supremum is taken over all maximal
$(n, \varepsilon)$-separated sets $E$.
\end{definition}
We refer the reader to \cite{walters2000introduction} for
more details and properties of the topological pressure.

\subsubsection{Distance expanding maps}
\label{sec:distance-expand-maps}

A continuous mapping $T:X\rightarrow X$ is said to be
\emph{distance expanding} (with respect to the metric $d$,
also known as ``Ruelle expanding'') if there exist constants
$\lambda> 1$, $\eta > 0$ and $n\geq 1$, such that for all
$x, y \in X$
\begin{eqnarray}\label{eq3.0.1}
\textrm{if} \ \ d(x,y)<2\eta, \ \ \textrm{then} \ \ d(T^n(x),T^n(y))\geq\lambda d(x,y).
\end{eqnarray}
In the sequel we will always assume (without loss of
generality, see chapter 3 in \cite{PrzyUrb10}) that $n=1$,
that is
\begin{eqnarray}\label{condicao-de-expansao}
d(x,y)<2\eta \ \ \Longrightarrow \ \ d(T(x),T(y))\geq\lambda d(x,y).
\end{eqnarray}
We refer the reader to \cite{Coven1980,ashley1992,PrzyUrb10}
for more details and properties of distance expanding map.

\subsubsection{Transfer operator}
\label{sec:transfer-operator}

We consider the Ruelle-Perron-Fr{\"o}benius transfer
operator $\mathcal{L}_{T,\phi}=\mathcal{L}_\phi$ associated
to $T:X\rightarrow X$ and the continuous function
(potential) $\phi:X\rightarrow \RR$ as the linear operator
defined on the space $C^0(X,\RR)$ of continuous functions
$g:X\rightarrow\RR$ by
$$\mathcal{L}_\phi(g)(x)=\sum_{T(y)=x}e^{\phi(y)}g(y).$$

The dual of the Ruelle-Perron-Fr{\"o}benius transfer
operator is given by
$$
\begin{array}{rclrccl}
  \mathcal{L}^*_\phi: & \mathcal{M} & \rightarrow & \mathcal{M} & & & \\
                      & \eta & \mapsto & \mathcal{L}^*_\phi \eta : & C^0(X,\RR) & \rightarrow & \mathbb{R}\\
                      &   &         &                      & \psi & \mapsto     & \displaystyle\int \mathcal{L}_\phi\psi \,d\eta.\\
\end{array}
$$
where $\SM$ is the family of Borel probability measures in
$X$.

\subsubsection{SRB and weak-SRB-like probability measures}
\label{sec:srb-srb-like}

For any point $x\in X$ we define
\begin{eqnarray}\label{11}
\sigma_n(x)=\frac{1}{n}\sum\limits_{j=0}^{n-1}\delta_{T^j(x)},
\end{eqnarray}
where $\delta_y$ is the Dirac delta probability measure
supported on $y\in X$. The sequence (\ref{11}) gives the
\emph{empiric probabilities} of the orbit of $x$. Let
$\SM_T$ be the (non-empty) set of $T$-invariant Borel
probability measures in $X$.

\begin{definition}
  For each point $x\in M$, we denote by
  $p\omega(x)\subset\mathcal{M}_T$ the limit set of the
  empirical sequence with initial state $x$ in the weak$^*$
  topology of $\SM$, that is,
  \begin{align*}
    p\omega(x)
    :=
    \left\{\mu\in\mathcal{M}_T:
    \exists n_j\xrightarrow[j\rightarrow+\infty]{}+\infty
    \;\textrm{such that} \;
    \sigma_{n_j}(x)\xrightarrow[j\rightarrow+\infty]{w^*}\mu\right\}.
  \end{align*}
\end{definition}

\begin{definition}\label{nu-SRB probability measure}
  Fixing an underlying reference measure $\nu$ on $X$, we
  say that $\mu\in \mathcal{M}_T$ is $\nu$-SRB (or
  $\nu$-physical) if $\nu(B(\mu))>0$, where
$$B(\mu)=\left\{x\in X; p\omega(x)=\{\mu\}\right\} \ \ \textrm{is the "ergodic basin" of} \ \mu. $$
\end{definition}
Let $\mu\in \mathcal{M}_T$ and $\varepsilon > 0$ be given
and consider the following measurable subsets of $X$
\begin{align}\label{eq3}
  A_{\epsilon,n}(\mu)
  :=
  \{x\in X:\dist(\sigma_n(x),\mu)<\epsilon\}
  \qand
  A_{\epsilon}(\mu)
  :=
  \{x\in X: \dist(p\omega(x),\mu)<\epsilon\}.
\end{align}
We say that $A_{\epsilon,n}(\mu)$ is the
$\epsilon$-\emph{pseudo basin of $\mu$ up to time} $n$ and
$A_{\epsilon}(\mu)$ is the basin of $\epsilon$-\emph{weak
  statistical attraction} of $\mu$.

\begin{definition}\label{def2} 
  Fix a reference probability measure $\nu$ for the space
  $X$. We say that a $T$-invariant probability measure $\mu$
  is
 \begin{enumerate}
 \item \emph{$\nu$-SRB-like} (or
   \emph{$\nu$-physical-like}), if
   $\nu(A_{\varepsilon}(\mu)) > 0$ for all
   $\varepsilon > 0$;
 \item \emph{$\nu$-weak-SRB-like} (or
   \emph{$\nu$-weak-physical-like}), if
   $\limsup_{n\rightarrow+\infty}\frac{1}{n}\log\nu(A_{\varepsilon,n}(\mu))=0,
   \ \forall\varepsilon>0.$
 \end{enumerate}
 When $\nu=\Leb$ we say that $\mu$ is simply SRB-like (or
 weak-SRB-like).
\end{definition}

\begin{remark}\label{rmk:toda-medida-SRB-like-eh-weak-SRB-like}
  It is easy to see that every $\nu$-SRB measure is also a
  $\nu$-SRB-like measure. Moreover, the $\nu$-SRB-like
  measures are a particular case of $\nu$-weak-SRB-like (see
  \cite[Theorem 1, item B]{catsigeras2016weak}).
\end{remark}

\begin{definition} Given a continuous map $T:X\rightarrow X$
  and a continuous function $\phi:M\rightarrow\mathbb{R}$,
  we say that a probability measure $\nu$ is conformal for
  $T$ with respect to $\phi$ (or $\phi$-conformal) if there
  exists $\lambda>0$ so that
  $\mathcal{L}^*_\phi\nu=\lambda \nu$.
\end{definition}

\subsubsection{SRB-like measures as limits of SRB measures}
\label{sec:srb-like-measures}

Our first result shows that we can see the $\nu$-SRB-like
measures as accumulation points of $\nu_n$-SRB measures.

\begin{maintheorem}
\label{mthm:theorem-conv-das-medidas-tipo-SRB}
Let $T:X\rightarrow X$ be an open distance expanding
topologically transitive map of a compact metric space $X$,
$(\phi_n)_{n\geq1}$ a sequence of H\"older continuous
potentials, $(\nu_n)_{n\geq1}$ a sequence of conformal
measures associated to the pair $(T,\phi_n)$ and
$(\mu_n)_{n\geq1}$ a sequence of $\nu_n$-SRB
measures. Assume that
\begin{enumerate}
\item $\phi_{n_j}\xrightarrow[j\rightarrow+\infty]{}\phi$
  (in the topology of uniform convergence);
\item $\nu_{n_j}\xrightarrow[j\rightarrow+\infty]{w^*}\nu$
  (in the weak$^*$ topology);
\item $\mu_{n_j}\xrightarrow[j\rightarrow+\infty]{w^*}\mu$
  (in the weak$^*$ topology).
\end{enumerate}
Then $\nu$ is a conformal measure for $(T,\phi)$ and $\mu$
is $\nu$-SRB-like. 
Moreover, $\mu$ is an equilibrium state for the potential
$\phi$ and, if $T$ is topologically exact, then
$\nu(X\setminus A_{\epsilon}(\mu))=0$ for all $\epsilon>0$.
\end{maintheorem}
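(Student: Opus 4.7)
The plan is to establish the three asserted conclusions in the order they appear, using the standard thermodynamic formalism for distance expanding maps and H\"older potentials to control the approximating sequences. To verify that $\nu$ is $\phi$-conformal, classical Ruelle--Perron--Fr\"obenius theory identifies the eigenvalue of $\mathcal{L}_{\phi_n}^*$ on $\nu_n$ as $\lambda_n = e^{P_{top}(T, \phi_n)}$; continuity of $P_{top}$ in the uniform topology on potentials yields $\lambda_{n_j} \to \lambda := e^{P_{top}(T, \phi)}$. Since $T$ is a finite-to-one local homeomorphism, for any $g \in C^0(X, \RR)$ the finite sum defining $\mathcal{L}_{\phi_{n_j}} g$ converges uniformly to $\mathcal{L}_\phi g$. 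Combining this with $\nu_{n_j} \to \nu$ weakly, the identity $\int \mathcal{L}_{\phi_{n_j}} g\, d\nu_{n_j} = \lambda_{n_j} \int g\, d\nu_{n_j}$ passes to the limit and yields $\mathcal{L}^*_\phi \nu = \lambda \nu$.

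The main technical step is proving that $\mu$ is $\nu$-SRB-like, i.e.\ $\nu(A_\varepsilon(\mu)) > 0$ for every $\varepsilon > 0$. The elementary observation is that for $j$ large with $\dist(\mu_{n_j}, \mu) < \varepsilon$, the inclusion $B(\mu_{n_j}) \subset A_\varepsilon(\mu)$ holds (because orbits in $B(\mu_{n_j})$ satisfy $p\omega(x) = \{\mu_{n_j}\}$); moreover $\nu_{n_j}(B(\mu_{n_j})) = 1$, as $\mu_{n_j}$ is the Gibbs equilibrium state for the H\"older potential $\phi_{n_j}$ and hence is equivalent to $\nu_{n_j}$. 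Thus $\nu_{n_j}(A_\varepsilon(\mu)) = 1$ for such $j$. The principal obstacle is that $A_\varepsilon(\mu)$ is neither open nor closed, so weak$^*$ convergence of $\nu_{n_j}$ does not immediately transfer this measure bound to $\nu$. I would bypass this by approximating $A_\varepsilon(\mu)$ from within via finite intersections of the open pseudo-basins: each $U_{N,M} := \bigcap_{k=N}^M A_{\varepsilon/2, k}(\mu)$ is open and $\bigcup_N \bigcap_{k\geq N} A_{\varepsilon/2, k}(\mu) \subset A_\varepsilon(\mu)$. The Birkhoff ergodic theorem, applied $\nu_{n_j}$-a.s.\ with Birkhoff limit $\mu_{n_j}$ close to $\mu$, gives $\nu_{n_j}(U_{N,M}) \to 1$ as $N \to \infty$, uniformly in $M \geq N$. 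The delicate point is to render this convergence uniform in $j$ along the subsequence, which should follow from a Gibbs/large-deviation estimate whose constants depend only on uniform bounds for $\phi_{n_j}$ and the expanding data of $T$ via bounded distortion. Granted such uniformity, one fixes $N, M$ independent of $j$ with $\nu_{n_j}(U_{N, M}) \geq 1/2$; passing to the weak$^*$ limit on the open set $U_{N, M}$ then yields $\nu(A_\varepsilon(\mu)) \geq \nu(U_{N, M}) \geq 1/2$.

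The equilibrium state assertion follows from the previously stated result of the paper that every $\nu_\phi$-SRB-like measure is an equilibrium state for $\phi$, applied to the pair $(\nu, \mu)$ just established. For the topologically exact strengthening, observe that $A_\varepsilon(\mu)$ is $T$-invariant (since $p\omega(Tx) = p\omega(x)$) and $\nu$ is forward quasi-invariant via $\mathcal{L}^*_\phi \nu = \lambda \nu$. Topological exactness supplies $N$ with $T^N V = X$ for every nonempty open $V$; using quasi-invariance and bounded distortion of the Jacobian of $T$ with respect to $\nu$, one spreads the positive $\nu$-measure of $A_\varepsilon(\mu)$ to full $\nu$-measure, giving $\nu(X \setminus A_\varepsilon(\mu)) = 0$. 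The hardest part of the whole argument is the uniform-in-$j$ concentration estimate underpinning the SRB-like step; without it, the argument would only yield the weaker $\nu$-weak-SRB-like property.
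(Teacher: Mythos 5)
Your treatment of the conformality of $\nu$ matches the paper's argument, and the idea for the equilibrium-state and topologically-exact parts is in the right spirit, but the central step — establishing that $\mu$ is $\nu$-SRB-like — has concrete gaps that the paper's proof avoids by a different mechanism.

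First, your sets $U_{N,M}=\bigcap_{k=N}^M A_{\varepsilon/2,k}(\mu)$ are indeed open, but they are \emph{not} contained in $A_\varepsilon(\mu)$: only the infinite intersection $\bigcap_{k\geq N}A_{\varepsilon/2,k}(\mu)$ sits inside $A_\varepsilon(\mu)$, and that set is a countable intersection, hence not open. So the claimed inequality $\nu(A_\varepsilon(\mu))\geq\nu(U_{N,M})$ simply does not follow. Second, even setting that aside, the Portmanteau theorem for weak$^*$ convergence gives $\nu(U)\leq\liminf_j\nu_{n_j}(U)$ for open $U$, which is the wrong direction: from $\nu_{n_j}(U_{N,M})\geq 1/2$ one cannot conclude anything about $\nu(U_{N,M})$ (a lower bound on the approximating measures of an open set transfers to the $\liminf$, not to the limit measure). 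Third, as you yourself flag, the uniform-in-$j$ Gibbs/large-deviation estimate needed to fix $N,M$ independently of $j$ is asserted rather than proved, and it is not an innocuous technicality — without it the argument collapses.

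The paper circumvents all three issues by a geometric lemma (Theorem~\ref{teorema-que garante-a-existencia-do-disco-no-conj-inv}) which produces, for each $j$, a disk $\Delta_{n_j}$ of \emph{uniform} radius $\delta_1/4$ with $\nu_{n_j}(\Delta_{n_j}\setminus A_{\varepsilon/2}(\mu_{n_j}))=0$. Compactness of $X$ then forces the disks to accumulate, so one can find a smaller disk $\Delta$, of radius independent of $j$, contained in $\Delta_{n_j}$ for all large $j$. Choosing the radius of $\Delta$ so that the Borel set $\Delta\setminus A_\varepsilon(\mu)$ has $\nu$-negligible boundary makes the weak$^*$ limit pass through correctly and gives $\nu(\Delta\setminus A_\varepsilon(\mu))=0$, whence $\nu(A_\varepsilon(\mu))\geq\nu(\Delta)>0$ by Proposition~\ref{prop:medida-positiva-em-abertos}. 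In short, the paper's key device — a disk of fixed radius with full relative measure in the basin, obtained from the invariance of $A_{\varepsilon/2}(\mu_{n_j})$ — is what supplies the uniformity and the Portmanteau-compatible test set that your argument is missing. This same disk $\Delta$ is then reused directly in the topologically-exact part (since $A_\varepsilon(\mu)$ is $T$-invariant and $T^N\Delta=X$ for some $N$), so it is genuinely the linchpin of the proof.
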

This is one of the motivations for the study of SRB-like
measures as the natural extension of the notion of
physical/SRB measure for $C^1$ maps. Additional
justification is given by the results of this work.

We stress that H\"older continuous potentials are only used
in this work in the assumption of
Theorem~\ref{mthm:theorem-conv-das-medidas-tipo-SRB}.  Since
all continuous potentials can be approximated by Lipschitz
potentials we have, for topologically exact maps in the
setting of Theorem A, that there always exists some
$\phi$-conformal measure admitting a $\nu$-SRB measure with
full basin of $\epsilon$-weak statistical attraction, for
all $\epsilon>0$.

Next result extends, in particular, the main result obtained
in \cite[Theorem 2.3]{CatsEnrich2012} proved only for
expanding circle maps.

\begin{maintheorem}
  \label{mthm:Toda-medida-nu-SRB-like-e-estado-de-equilibrio}
  Let $T:X\rightarrow X$ be an open expanding topologically
  transitive map of a compact metric space $X$ and
  $\phi:X\rightarrow\RR$ a continuous potential. For each
  (necessarily existing) conformal measure $\nu_{\phi}$ all
  the (necessarily existing) $\nu_{\phi}$-SRB-like measures
  are equilibrium states for the potential $\phi$.
  \end{maintheorem}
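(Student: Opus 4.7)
The plan is to show that any $\nu_\phi$-SRB-like measure $\mu$ satisfies $h_\mu(T) + \int\phi\,d\mu = P_{top}(T,\phi) = \log\lambda$, where $\lambda > 0$ is the conformality constant ($\mathcal{L}_\phi^* \nu_\phi = \lambda \nu_\phi$, identified with $e^{P_{top}(T,\phi)}$ by classical thermodynamic formalism for distance expanding maps). The variational principle gives the automatic upper bound $h_\mu(T) + \int\phi\,d\mu \leq \log\lambda$, so the content is the reverse inequality. The approach is to use the SRB-like hypothesis (which by the Remark implies $\nu_\phi$-weak-SRB-like) to produce exponentially many separated orbits whose empirical averages hover near $\mu$, feed them into the Misiurewicz half of the variational principle to construct an invariant measure $\mu'$ close to $\mu$ whose pressure is near $\log\lambda$, and finally transfer the inequality to $\mu$ using upper semi-continuity of the entropy map (available because distance expanding maps are expansive).

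First I would establish Gibbs-type bounds for $\nu_\phi$: since $T$ is open and distance expanding, for $\delta$ below the expansivity threshold the iterate $T^n$ is a homeomorphism from $B(x,n,\delta)$ onto $B(T^nx,\delta)$. Iterating the conformal identity $\nu_\phi(TA) = \lambda \int_A e^{-\phi}\,d\nu_\phi$ and using the uniform continuity modulus $\omega_\phi$ of $\phi$ yield
\[
c(\delta)\,\lambda^{-n}\,e^{S_n\phi(x) - n\omega_\phi(\delta)} \,\leq\, \nu_\phi(B(x,n,\delta)) \,\leq\, \lambda^{-n}\,e^{S_n\phi(x) + n\omega_\phi(\delta)},
\]
with $c(\delta) = \inf_z \nu_\phi(B(z,\delta)) > 0$ because $\supp\nu_\phi = X$ by topological transitivity. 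Fixing small $\varepsilon,\delta > 0$, weak-SRB-like gives a subsequence $n_k$ with $\frac{1}{n_k}\log\nu_\phi(A_{\varepsilon,n_k}(\mu)) \to 0$. Let $E_{n_k} \subset A_{\varepsilon,n_k}(\mu)$ be maximal $(n_k,\delta)$-separated, so its $(n_k,\delta)$-balls cover $A_{\varepsilon,n_k}(\mu)$; for $x \in E_{n_k}$ the proximity of $\sigma_{n_k}(x)$ to $\mu$ controls $\tfrac{1}{n_k} S_{n_k}\phi(x)$ within $O(\varepsilon)$ of $\int\phi\,d\mu$. Applying the Gibbs upper bound to the cover then produces
\[
\tfrac{1}{n_k}\log |E_{n_k}| \,\geq\, \log\lambda - \int\phi\,d\mu - O(\varepsilon) - \omega_\phi(\delta) + o(1).
\]

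With this in hand I would run the Misiurewicz construction from the proof of the variational principle (Theorem 9.10 of \cite{walters2000introduction}): form the Gibbs-weighted atomic measures $\widetilde\nu_{n_k} = Z_{n_k}^{-1}\sum_{x \in E_{n_k}} e^{S_{n_k}\phi(x)}\delta_x$ with $Z_n := \sum_{x \in E_n} e^{S_n\phi(x)}$, time-average to $\widetilde\mu_{n_k} := \frac{1}{n_k}\sum_{j=0}^{n_k-1} T^j_\ast \widetilde\nu_{n_k}$, and extract a weak$^*$ accumulation point $\mu'$. Because every $\sigma_{n_k}(x)$ with $x \in E_{n_k}$ is $\varepsilon$-close to $\mu$, the convex combination $\widetilde\mu_{n_k}$ inherits that closeness, so $\dist(\mu',\mu) \leq \varepsilon$ and $\mu'$ is $T$-invariant. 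The Misiurewicz inequality then yields
\[
h_{\mu'}(T) + \int\phi\,d\mu' \,\geq\, \limsup_k \tfrac{1}{n_k}\log Z_{n_k} \,\geq\, \log\lambda - O(\varepsilon) - \omega_\phi(\delta),
\]
where the last step uses $Z_n \geq |E_n|\,e^{n(\int\phi\,d\mu - O(\varepsilon))}$ together with the lower bound on $|E_{n_k}|$. A diagonal choice $\varepsilon_j,\delta_j \downarrow 0$ produces invariant measures $\mu'_j \to \mu$ with $h_{\mu'_j}(T) + \int\phi\,d\mu'_j \geq \log\lambda - o(1)$; upper semi-continuity of entropy forces $\limsup_j h_{\mu'_j}(T) \leq h_\mu(T)$, continuity of $\phi$ handles the integral, and $h_\mu(T) + \int\phi\,d\mu \geq \log\lambda$ follows.

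The main obstacle is justifying the Misiurewicz step with separated sets $E_n$ confined to the pseudo-basin $A_{\varepsilon,n}(\mu)$ rather than drawn from all of $X$. The standard proof does not in fact need global maximality of $E_n$; what it does need is control on the weak$^*$ limits of the weighted time-averages $\widetilde\mu_{n_k}$, and this control is guaranteed here precisely because each $\sigma_{n_k}(x)$ lies near $\mu$, pinning $\mu'$ to a neighbourhood of $\mu$ independently of the Gibbs weights. A secondary nuisance is that $\phi$ is only continuous, so the Gibbs bounds acquire the correction $e^{\pm n \omega_\phi(\delta)}$; this is harmless after $\delta \downarrow 0$, but it is why the $\varepsilon$ and $\delta$ limits must be taken jointly, forcing the diagonal argument in the last step.
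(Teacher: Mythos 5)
Your argument is correct, and it takes a genuinely different route from the paper's proof of Theorem~\ref{mthm:Toda-medida-nu-SRB-like-e-estado-de-equilibrio}. The paper proceeds \emph{negatively}: it first proves a large-deviations upper bound (Proposition~\ref{Lm:Large deviation lemma for distance expanding}) showing that $\nu_\phi\bigl(\{x : \dist(\sigma_n(x),\SK_r(\phi)) \ge \varepsilon\}\bigr)$ decays exponentially, then invokes Borel--Cantelli to force $p\omega(x)\subset\SK_r^\varepsilon(\phi)$ for $\nu_\phi$-a.e.\ $x$, and finally uses Proposition~\ref{proposition:Wf-compacto} to pin $\SW_T(\nu_\phi)\subset\SK_0(\phi)$. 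You argue \emph{positively}: start from the weak-SRB-like property $\limsup_n\frac1n\log\nu_\phi(A_{\varepsilon,n}(\mu))=0$, harvest exponentially many $(n,\delta)$-separated orbits inside $A_{\varepsilon,n}(\mu)$ using the Gibbs estimate (Proposition~\ref{proposicao1}), run Misiurewicz's construction to produce invariant measures $\mu'$ near $\mu$ with pressure near $\log\lambda$, and close with upper semi-continuity of $\eta\mapsto h_\eta(T)$ (available since $T$ is positively expansive). Both approaches share the same technical engine --- the conformal Jacobian, dynamical-ball Gibbs estimates, and the log-sum inequality of Lemma~\ref{lemma10.4.4} --- but wrap it differently. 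The paper's wrapping buys more: the exponential decay rate feeds directly into the large-deviations bound of Corollary~\ref{mthm:pressao-nao-negativa-implica-caracterizacao-dos-estados-de-equilibrio}. Your wrapping is leaner for the purpose of Theorem~\ref{mthm:Toda-medida-nu-SRB-like-e-estado-de-equilibrio} itself; it is in fact the same direct route the authors sketch in Remark~\ref{rmkY} (transporting Proposition~\ref{teorema:as-medidas-SRB-like-tem-pressao-nao-negativa} to the distance-expanding setting), with two minor divergences: you use maximal separated sets in the pseudo-basin where the paper uses inverse-branch preimages of a boundary-negligible partition, and you close the diagonal $\varepsilon,\delta\downarrow 0$ argument via global upper semi-continuity of $h_\bullet(T)$ whereas the paper's Proposition~\ref{teorema:as-medidas-SRB-like-tem-pressao-nao-negativa} keeps $\varepsilon$ fixed and small and uses only the local semi-continuity of $\lambda\mapsto h(\SP,\lambda)$ at $\mu$. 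Both closures are legitimate.
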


  Next we explore more properties of $\nu$-SRB-like measures.

\begin{definition}\label{definition-of-SK(phi)}
  Let $T:X\rightarrow X$ be a continuous map and
  $\phi:X\rightarrow\RR$ a continuous potential. Given $r>0$
  we define the subset
  $ \SK_r(\phi)=\{\mu\in\SM_T: h_{\mu}(T)+\int\phi d\mu\geq
  P_{\textrm{top}}(T,\phi)-r\}.  $
\end{definition}

\begin{maincorollary}\label{mthm:pressao-nao-negativa-implica-caracterizacao-dos-estados-de-equilibrio}
  Let $T:X\rightarrow X$ be an open expanding topologically
  transitive map of a compact metric space $X$,
  $\phi:X\rightarrow\RR$ a continuous potential and $\nu$ a
  $\phi$-conformal measure. If $\mu$ is a $\phi$-equilibrium
  state such that $h_{\nu}(T,\mu)<\infty$, then every ergodic
  component $\mu_x$ of $\mu$ is a $\nu$-weak-SRB-like
  measure. Moreover, if $\mu$ is the unique
  $\phi$-equilibrium state, then $\mu$ is $\nu$-SRB,
  $\nu(B(\mu))=1$ and $\mu$ satisfies the following large
  deviation bound: for every weak$^*$ neighborhood
  $\mathcal{V}$ of $\mu$ we have
  \begin{align*}
    \limsup_{n\to+\infty}
    \frac1n\log\nu(\{x\in X: \ \sigma_n(x)\in
    \SM\setminus\SV\})\leq -I(\SV)
  \end{align*}
where $I(\SV)=\sup\{r>0: \SK_r(\phi)\subset\SV\}$.
\end{maincorollary}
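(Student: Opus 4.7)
The plan is to proceed in three stages: first reduce to the ergodic case via ergodic decomposition, then establish the weak-SRB-like property for each ergodic component by a Katok-type lower bound combined with the conformal measure identity, and finally upgrade to the SRB property and the large deviation bound under the uniqueness assumption.

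Let $\mu=\int\mu_x\,d\mu(x)$ be the ergodic decomposition of $\mu$. Using the affinity of entropy and $\phi\mapsto\int\phi\,d\cdot$ on $\SM_T$, one has $h_\mu(T)+\int\phi\,d\mu=\int\bigl(h_{\mu_x}(T)+\int\phi\,d\mu_x\bigr)d\mu(x)$, and since each integrand is $\leq P_{\textrm{top}}(T,\phi)$, the equality $\mu\in\SK_0(\phi)$ forces $\mu_x$ to be an (ergodic) $\phi$-equilibrium state for $\mu$-a.e.\ $x$. Also, $h_{\nu}(T,\mu_x)\leq h_{\nu}(T,\mu)<\infty$ almost surely, so it suffices to prove that every \emph{ergodic} $\phi$-equilibrium state $\mu$ with $h_\nu(T,\mu)<\infty$ is $\nu$-weak-SRB-like. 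By Theorem~B we have $P_{\textrm{top}}(T,\phi)=\log\lambda$, where $\lambda$ is the eigenvalue of $\mathcal{L}^*_\phi$ associated to $\nu$.

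For the weak-SRB-like estimate, fix $\varepsilon>0$. Since $T$ is open distance expanding, for every sufficiently small $\delta<\eta$ and every $(n,2\delta)$-separated set $E_n$ the dynamical balls $\{B(y,n,\delta)\}_{y\in E_n}$ are pairwise disjoint. The conformal relation $\mathcal{L}^*_\phi\nu=\lambda\nu$ iterated $n$ times, together with uniform continuity of $\phi$ and bounded distortion on dynamical balls for distance expanding maps, gives the standard Gibbs-type lower bound
\begin{equation*}
\nu(B(y,n,\delta))\;\geq\;C(\delta)\,\exp\bigl(S_n\phi(y)-nP_{\textrm{top}}(T,\phi)\bigr)\qquad(y\in X,\ n\geq1).
\end{equation*}
Now apply Katok's entropy formula to the ergodic measure $\mu$: for every $\gamma>0$ there exists $\delta_0>0$ such that for all $\delta\in(0,\delta_0)$ and all large $n$, the set $A_{\varepsilon/2,n}(\mu)$ contains an $(n,2\delta)$-separated subset $E_n$ with $|E_n|\geq e^{n(h_\mu(T)-\gamma)}$. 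Moreover, by Birkhoff's theorem we can arrange that every $y\in E_n$ satisfies $S_n\phi(y)\geq n(\int\phi\,d\mu-\gamma)$. Summing the lower bound on $\nu(B(y,n,\delta))$ over $E_n$ and using disjointness yields
\begin{equation*}
\nu\bigl(A_{\varepsilon,n}(\mu)\bigr)\;\geq\;C(\delta)\,\exp\!\Bigl(n\bigl(h_\mu(T)+\int\phi\,d\mu-P_{\textrm{top}}(T,\phi)-2\gamma\bigr)\Bigr)=C(\delta)e^{-2n\gamma},
\end{equation*}
because $\mu$ is an equilibrium state. Letting $\gamma\to0$ gives $\limsup_{n}\frac{1}{n}\log\nu(A_{\varepsilon,n}(\mu))\geq0$, and the reverse inequality is trivial; this is the $\nu$-weak-SRB-like property of every ergodic component. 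The main technical point here is the Gibbs-type lower estimate for $\nu(B(y,n,\delta))$ valid for all $y$, which relies on the conformality of $\nu$ combined with the distance-expanding, topologically transitive structure.

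For the last part, assume $\mu$ is the unique $\phi$-equilibrium state. The previous paragraph and ergodicity of $\mu$ already imply that $\mu$ is $\nu$-SRB with $\nu(B(\mu))=1$: any $\nu$-generic accumulation point of the empirical measures is $T$-invariant and sits in $\SK_0(\phi)=\{\mu\}$, so $p\omega(x)=\{\mu\}$ for $\nu$-a.e.\ $x$. For the large deviation statement, fix a weak$^*$ neighborhood $\mathcal{V}$ of $\mu$ and let $r<I(\mathcal{V})$, so that $\SK_r(\phi)\subset\mathcal{V}$. The complement $\SM_T\setminus\mathcal{V}$ is weak$^*$ compact; by an upper semicontinuity/open-cover argument and the upper bound
\begin{equation*}
\limsup_{n\to+\infty}\frac{1}{n}\log\nu\bigl(A_{\varepsilon,n}(\eta)\bigr)\;\leq\;h_\eta(T)+\int\phi\,d\eta-P_{\textrm{top}}(T,\phi),
\end{equation*}
valid for every $T$-invariant $\eta$ (which follows from the $(n,\varepsilon)$-spanning/separated counting used to define $P_{\textrm{top}}$ together with the conformal Gibbs estimate), one covers $\SM_T\setminus\mathcal{V}$ by finitely many pseudo-basins $A_{\varepsilon,n}(\eta_i)$ with $h_{\eta_i}(T)+\int\phi\,d\eta_i\leq P_{\textrm{top}}(T,\phi)-r$. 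Summing the finitely many exponential upper bounds gives $\limsup_n\frac{1}{n}\log\nu(\{x:\sigma_n(x)\notin\mathcal{V}\})\leq-r$, and letting $r\uparrow I(\mathcal{V})$ closes the argument. The main obstacle is the uniform upper bound displayed above, which requires carefully exploiting compactness of $\SM_T\setminus\mathcal{V}$ and the semicontinuity of $\eta\mapsto h_\eta(T)+\int\phi\,d\eta$ for expanding $T$.
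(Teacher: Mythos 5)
Your proposal follows essentially the same route as the paper: ergodic decomposition to reduce to ergodic equilibrium states, a Katok/Brin-Katok entropy count combined with the conformal Gibbs estimate to get the $\nu$-weak-SRB-like lower bound (this is the content of the paper's Theorem~\ref{teo*} via Remark~\ref{rmkZ} and Corollary~\ref{corolario:medida-ergodica-expansora-com-pressao-nao-negativa-eh-SRB-like}), and the large-deviation upper bound from Proposition~\ref{Lm:Large deviation lemma for distance expanding} via a finite covering of $\SM_T\setminus\mathcal{V}$; the paper invokes these as prepackaged results while you reconstruct them inline. One small imprecision to watch: for a merely continuous $\phi$ the Gibbs-type lower bound on $\nu(B(y,n,\delta))$ carries an extra factor $e^{-n\gamma}$ coming from the modulus of continuity of $\phi$ (compare Proposition~\ref{proposicao1}), not a constant $C(\delta)$ alone; since $\gamma$ can be taken arbitrarily small this does not affect the limiting statement, but the bound as you wrote it is slightly too strong.
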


\subsubsection{Non-uniformly expanding maps}
\label{sec:non-uniformly-expand}

We now extend Theorem
\ref{mthm:Toda-medida-nu-SRB-like-e-estado-de-equilibrio} to
weaker forms of expansion.

We denote by $\|\cdot\|$ a Riemannian norm on the compact
$m$-dimensional boundaryless manifold $M$, $m\geq1$; by
$d(\cdot,\cdot)$ the induced distance and by $\Leb$ a
Riemannian volume form, which we call \emph{Lebesgue
  measure} or \emph{volume} and assume to be normalized:
$\Leb(M)=1$. Note that $\Leb$ is not necessarily
$f$-invariant.

Recall that a $C^1$-map $f:M\rightarrow M$ is
\emph{uniformly expanding} or just \emph{expanding} if there
is some $\lambda > 1$ such for some choice of a metric in
$M$ one has
$$\|Df(x)v\|>\lambda\|v\|,  \ \textrm{for all} \ x\in M \ \textrm{and all} \ v\in T_xM.$$

In the following statements we always assume $f:M \to M$ to
be a $C^1$ local diffeomorphism and define
\begin{align*}
  \psi:=-\log Jf\qand Jf=|\det Df|.
\end{align*}




The \emph{Lyapunov exponents} of a $C^1$ local
diffeomorphism $f$ of a compact manifold $M$ are defined by
Oseledets Theorem which states that, for any $f$-invariant
probability measure $\mu$, for almost all points $x\in M$
there is $\kappa(x)\geq1$, a filtration
$T_xM=F_1(x)\supset F_2(x)\supset\cdots\supset
F_{\kappa(x)}(x)\supset F_{\kappa(x)+1}(x)=\{0\},$ and
numbers $\lambda_1(x)>\lambda_2(x)>\cdots > \lambda_k(x)$
such $Df(x)\cdot F_i(x)=F_i(f(x))$ and
$$\lim_{n\to\infty}\frac{1}{n}\log\|Df^n(x)v\|=\lambda_i(x)$$
for all $v\in F_i(x)\backslash F_{i+1}(x)$ and
$0\leq i\leq \kappa(x)$. The numbers $\lambda_i(x)$ are
called \emph{Lyapunov exponents} of $f$ at the point
$x$. For more details on Lyapunov exponents and non-uniform
hyperbolicity, see \cite{BaPe13}.

\begin{definition}
  Let $f:M \rightarrow M$ be a $C^1$ local
  diffeomorphism. We say that $\mu\in\SM_f$ satisfies
  Pesin's Entropy Formula if $h_{\mu}(f)=\int\Sigma^+d\mu,$
  where $\Sigma^{+}$ denotes the sum of the positive
  Lyapunov exponents at a regular point, counting
  multiplicities.
\end{definition}

Given $0<\sigma<1$ we define
\begin{eqnarray}\label{conj:NUE}
  H(\sigma)
  =
  \left\{x\in M:
  \limsup_{n\to+\infty}\frac1n
  \sum_{j=0}^{n-1}\log\|Df(f^j(x))^{-1}\|<\log\sigma
  \right\}.
\end{eqnarray}

\begin{definition}\label{definicao-de-NUE}
  We say that $f:M\rightarrow M$ is \emph{non-uniformly
    expanding} if there exists $\sigma\in(0,1)$ such that
  $\Leb(H(\sigma))=1$.
\end{definition}

A probability measure $\nu$ (not necessarily invariant) is
\emph{expanding} if there exists $\sigma\in(0,1)$ such that
$\nu(H(\sigma))=1$.

Next result relates non-uniform expansion and expanding
measures with the Entorpy Formula.

\begin{maintheorem}\label{mthm:formula de pesin para
    difeo-local com HT}
  Let $f:M \to M$ be non-uniformly expanding. Every
  expanding weak-$SRB$-like probability measure satisfies
  Pesin’s Entropy Formula. Moreover, all ergodic
  $SRB$-like probability measures are expanding.
\end{maintheorem}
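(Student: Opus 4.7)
My approach is to establish both inequalities in Pesin's identity $h_\mu(f)=\int\Sigma^+\,d\mu$ for the expanding weak-SRB-like measure $\mu$, and then deduce the ``in particular'' claim by a weak-$^*$ compactness argument. For the upper bound (Ruelle side): since $\mu$ is expanding, $\mu(H(\sigma))=1$ for some $\sigma\in(0,1)$, and the same holds for $\mu$-a.e.\ ergodic component. Thus all Lyapunov exponents at $\mu$-generic points are positive, so Oseledets' theorem combined with Birkhoff gives
\[
\int\Sigma^+\,d\mu = \int\sum_i m_i\lambda_i\,d\mu = \int\log|\det Df|\,d\mu = -\int\psi\,d\mu,
\]
and Ruelle's inequality provides $h_\mu(f)\le\int\Sigma^+d\mu = -\int\psi\,d\mu$.

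For the opposite inequality $h_\mu(f)\ge -\int\psi\,d\mu$, I would use hyperbolic times together with the conformality of $\Leb$ for $\psi$. Since $\mu$ is expanding, Pliss' lemma supplies, for $\mu$-a.e.\ $x$, a positive density $\theta>0$ of $\sigma$-hyperbolic times. At such a time $n$ there is a neighbourhood $V_n(x)\subset B(x,n,\delta_0)$ mapped diffeomorphically onto $B(f^nx,\delta_0)$ by $f^n$ with bounded distortion, which combined with $|\det Df^n|=e^{-S_n\psi}$ yields the two-sided estimate
\[
c\,e^{S_n\psi(x)} \le \Leb(B(x,n,\delta_0)) \le C\,e^{S_n\psi(x)}.
\]
For $x\in A_{\varepsilon,n}(\mu)$, continuity of $\psi$ translates $d(\sigma_n(x),\mu)<\varepsilon$ into $\bigl|\tfrac1n S_n\psi(x)-\int\psi\,d\mu\bigr|<\varepsilon'$, with $\varepsilon'\to0$ as $\varepsilon\to0$. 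Let $G_n$ denote the subset of $A_{\varepsilon,n}(\mu)$ on which $n$ is a hyperbolic time: by Pliss and Fubini, $\mu(G_n)\ge\theta/4$ along a positive-density subsequence, while the weak-SRB-like hypothesis yields $n_k\to\infty$ with $\Leb(A_{\varepsilon,n_k}(\mu))\ge e^{-n_k\eta}$ for any preassigned $\eta>0$. I would then invoke Katok's entropy formula: $h_\mu(f)$ equals the exponential growth rate of the minimal number of dynamical balls of radius $\delta$ needed to cover a set of $\mu$-mass $\ge\alpha$. Using the upper bound on $\Leb(B(x,n,\delta_0))$ to force such a cover of $G_{n_k}$ to have cardinality $\ge e^{-n_k\eta}\cdot C^{-1}e^{-n_k(\int\psi\,d\mu+\varepsilon')}$, and noting that $\mu(G_{n_k})$ is bounded away from zero, I obtain $h_\mu(f)\ge -\int\psi\,d\mu-\eta-\varepsilon'$; sending $\eta,\varepsilon'\to 0$ closes the inequality.

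For the ``in particular'' claim, let $\mu$ be an ergodic SRB-like measure. Then $\Leb(A_\varepsilon(\mu))>0$ for every $\varepsilon>0$, and since $\Leb(H(\sigma_0))=1$ by the NUE hypothesis, the intersection $A_\varepsilon(\mu)\cap H(\sigma_0)$ is non-empty. For any $x$ in this intersection, choose $n_j\to\infty$ with $\sigma_{n_j}(x)$ converging in the weak-$^*$ topology to some $\mu_\varepsilon'$ with $d(\mu_\varepsilon',\mu)\le\varepsilon$; applying the convergence to the continuous function $g(y):=\log\|Df(y)^{-1}\|$ gives
\[
\int g\,d\mu_\varepsilon' = \lim_j \frac{1}{n_j}S_{n_j}g(x) \le \limsup_n \frac{1}{n}S_n g(x) < \log\sigma_0.
\]
Letting $\varepsilon\downarrow 0$ produces $\mu_\varepsilon'\to\mu$ weakly-$^*$ with $\int g\,d\mu_\varepsilon'\le\log\sigma_0$, and continuity of $g$ passes this to the limit: $\int g\,d\mu\le\log\sigma_0<0$. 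For any $\sigma\in(\sigma_0,1)$, Birkhoff's theorem then forces $\mu(H(\sigma))=1$, showing $\mu$ is expanding, so the first part applies and Pesin's formula holds.

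The main technical obstacle lies in the second paragraph: matching the bounded-distortion volume estimates (which only hold at hyperbolic times) with the subexponential Lebesgue decay of the pseudo-basins, and inserting them into a Katok-type covering bound for the metric entropy while carefully restricting to the hyperbolic-time strata where both the lower and upper volume bounds are simultaneously available.
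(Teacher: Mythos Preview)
Your argument for the ``in particular'' clause is essentially correct and matches the paper's Lemma~\ref{lema-simples1}: pick $x\in A_{\varepsilon}(\mu)\cap H(\sigma_0)$, pass to a weak$^*$ limit of $\sigma_{n_j}(x)$, and use continuity of $\log\|Df^{-1}\|$ plus Birkhoff. The Ruelle side of the first assertion is also fine.

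The reverse inequality $h_\mu(f)\ge -\int\psi\,d\mu$ has two real problems.

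\emph{First}, the claim of bounded distortion at hyperbolic times (constants $c,C$ independent of $n$) is false in the $C^1$ category: with $\psi$ merely continuous one only gets $|S_n\psi(y)-S_n\psi(x)|\le \sum_{k=1}^n\omega_\psi(\sigma^{k/2}\delta_0)=o(n)$, hence sub-exponential distortion $e^{\pm n\varepsilon'}$, not uniform constants. This is repairable by absorbing the $\varepsilon'$, but it is exactly the obstacle that forces the paper to work \emph{without} bounded distortion.

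\emph{Second}, and more seriously, the Katok step does not give what you need. Katok's formula expresses $h_\mu(f)$ as the growth rate of $N_\mu(n,\delta,\alpha)$, the \emph{infimum} over all sets $Z$ with $\mu(Z)\ge\alpha$ of the minimal $(n,\delta)$-spanning cardinality for $Z$. You exhibit \emph{one} set $G_{n_k}$ with $\mu(G_{n_k})\ge\theta/4$ whose cover must be large (by a Lebesgue count). That yields only $N_\mu(n_k,\delta,\theta/4)\le\#(\text{cover of }G_{n_k})$, the wrong direction: a lower bound on the cover of one particular $\mu$-large set does not lower-bound the infimum over all such sets. To run your argument you would need every $Z$ with $\mu(Z)\ge\alpha$ to have $\Leb(Z)$ bounded below, which fails when $\mu\perp\Leb$ --- precisely the generic situation for $C^1$ expanding maps. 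In addition, Katok's formula requires $\mu$ ergodic, which the theorem does not assume.

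The paper avoids all of this by reversing the logic: rather than lower-bounding a cover count, it \emph{upper-bounds} $\Leb(A_{\varepsilon,n}(\mu))$ directly. Using the NUE hypothesis (for Lebesgue, not for $\mu$) and the flexible covering Lemma~\ref{lemma3.5}, one covers $A_{\varepsilon,n}(\mu)$ by hyperbolic preballs $Q$ and obtains $\Leb(A_{\varepsilon,n}(\mu))\le e^{o(n)}\sum_Q e^{S_n\psi(x_Q)}$. The sum is then rewritten via the Lagrange-multiplier identity (Lemma~\ref{lemma10.4.4}) as $\exp\big(n\int\psi\,d\mu_n+H(\mathcal{P}^n,\nu_n)\big)$ for a suitable empirical measure $\mu_n$ close to $\mu$, and a Misiurewicz-type estimate (Lemma~\ref{lm2a}) bounds $\frac{1}{n}H(\mathcal{P}^n,\nu_n)\le h_\mu(f)+o(1)$. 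The weak-SRB-like condition $\limsup_n\frac{1}{n}\log\Leb(A_{\varepsilon,n}(\mu))=0$ then forces $h_\mu(f)+\int\psi\,d\mu\ge 0$. No ergodicity, no Katok, no comparison of $\mu$ with $\Leb$ on arbitrary sets.
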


Now we add a condition ensuring that there exists ergodic
weak-SRB-like measures and that all weak-SRB-like measures
are equilibrium states.

\begin{maincorollary}\label{mthm:pressao-top-zero-implica-existencia-de-medida-ergodica}
  Let $f:M \to M$ be non-uniformly expanding. If
  $P_{\textrm{top}}(f,\psi)=0$, then all weak-SRB-like
  probability measures are $\psi$-equilibrium states and all
  the (necessarily existing) expanding ergodic weak-SRB-like
  measures satisfy Pesin’s Entropy Formula.
\end{maincorollary}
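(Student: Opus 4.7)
The plan is to reduce the two parts of the corollary to Theorem~\ref{mthm:formula de pesin para difeo-local com HT} together with a large-deviation-type upper bound for $\Leb$ coming from the $\psi$-conformality of $\Leb$ (which holds for every $C^1$ local diffeomorphism) and the hyperbolic-time estimates afforded by non-uniform expansion.

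First I would show that every weak-SRB-like $\mu$ satisfies $h_\mu(f)+\int\psi\,d\mu\ge 0$; combined with the variational principle this gives equality with $P_{\textrm{top}}(f,\psi)=0$, hence the $\psi$-equilibrium state property. Since $f$ is non-uniformly expanding, Pliss's lemma provides Lebesgue-a.e.\ $x$ with positive density of hyperbolic times $n$; at such $n$ the $\psi$-conformality of $\Leb$ yields the upper bound $\Leb(B(x,n,\delta))\le C\, e^{S_n\psi(x)}$ for small $\delta>0$, and for $x\in A_{\epsilon,n}(\mu)$ weak$^*$-continuity gives $\tfrac{1}{n}S_n\psi(x)=\int\psi\,d\mu+O(\epsilon)$. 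Covering the portion of $A_{\epsilon,n}(\mu)$ having $n$ as a hyperbolic time by such dynamical balls forces any covering to contain at least $\Leb(A_{\epsilon,n}(\mu))\cdot e^{-n\int\psi\,d\mu-o(n)}$ balls; a Katok-type entropy characterization applied to these covers, together with the weak-SRB-like condition $\tfrac{1}{n}\log\Leb(A_{\epsilon,n}(\mu))\to 0$, then yields $h_\mu(f)\ge-\int\psi\,d\mu$.

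For the second assertion I would first produce an ergodic weak-SRB-like measure by compactness of $\SM_f$ and ergodic decomposition, in the spirit of Catsigeras--Cerminara--Enrich. By the first step any such $\mu$ is a $\psi$-equilibrium state, so $h_\mu(f)=-\int\psi\,d\mu=\int\sum_i\lambda_i(x)\,d\mu$; combining Ruelle's inequality $h_\mu(f)\le\int\Sigma^+\,d\mu$ with $\Sigma^+-\sum_i\lambda_i=\sum_i\lambda_i^-\ge 0$ forces $\int\sum_i\lambda_i^-\,d\mu=0$, so every Lyapunov exponent of $\mu$ is non-negative. To upgrade this to the strict uniform condition $\mu(H(\sigma'))=1$ in the definition of ``expanding'', I would compare with the sets $H_K(\sigma):=\{x:\tfrac1n S_n(\log\|Df^{-1}\|)(x)<\log\sigma\text{ for all }n\ge K\}$, for which $\Leb(H_K(\sigma))\to 1$; a careful selection argument (combining the non-exponential lower bound on $\Leb(A_{\epsilon,n}(\mu))$ with $\Leb(H_K(\sigma))\to 1$) provides, for suitable $\epsilon,n,K$, points $x\in A_{\epsilon,n}(\mu)\cap H_K(\sigma)$ along which $\sigma_n(x)\to\mu$ and $\int\log\|Df^{-1}\|\,d\sigma_n(x)<\log\sigma$, giving $\int\log\|Df^{-1}\|\,d\mu<0$ in the limit; ergodicity plus Birkhoff then yield the expansion condition, and Theorem~\ref{mthm:formula de pesin para difeo-local com HT} applied to $\mu$ produces Pesin's Entropy Formula.

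The hard part is twofold: (a) making the Katok-type estimate in the first step precise enough to convert the hyperbolic-time volume bound and the subexponential decay of $\Leb(A_{\epsilon,n}(\mu))$ into the sharp lower bound $h_\mu(f)\ge-\int\psi\,d\mu$, keeping all $o(n)$ errors under uniform control in $\epsilon$, $\delta$, $n$; and (b) the selection argument for strict positivity, which must circumvent the fact that the non-exponential lower bound on $\Leb(A_{\epsilon,n}(\mu))$ need not dominate $\Leb(H_K(\sigma)^c)$ for any single $K$, and the fact that the non-ergodic case must be handled via an ergodic decomposition only after checking that $\hat\mu$-a.e.\ ergodic component of a weak-SRB-like measure is still weak-SRB-like.
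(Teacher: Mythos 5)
Your first step (deriving $h_\mu(f)+\int\psi\,d\mu\geq0$ for every weak-SRB-like $\mu$ via $\psi$-conformality of Lebesgue, hyperbolic-time covers, and a Katok-type entropy estimate) is essentially the content of Proposition~\ref{teorema:as-medidas-SRB-like-tem-pressao-nao-negativa}, and with $P_{\textrm{top}}(f,\psi)=0$ plus the variational principle this does give the equilibrium-state assertion; that part is sound in approach.

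The second assertion, however, has a genuine gap. Your argument that Ruelle's inequality forces $\int\sum_i\lambda_i^-\,d\mu=0$ is not correct: from $h_\mu(f)=\int\sum_i\lambda_i\,d\mu$ and $h_\mu(f)\leq\int\Sigma^+\,d\mu$ you only obtain $\int\sum_i\lambda_i\,d\mu\leq\int\Sigma^+\,d\mu$, which is exactly the pointwise tautology $\sum_i\lambda_i\leq\Sigma^+$ integrated, and so carries no information about the negative part of the spectrum. Ruelle is an inequality in the wrong direction for this; on its own it cannot tell you the exponents are non-negative, let alone that $\mu$ is \emph{expanding} in the strict sense $\mu(H(\sigma))=1$ for some $\sigma<1$, which is what Theorem~\ref{mthm:formula de pesin para difeo-local com HT} requires. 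Your back-up plan via the sets $H_K(\sigma)$ also does not close the gap, for the reason you yourself anticipate: for a weak-SRB-like $\mu$ the only control you have is subexponential decay of $\Leb(A_{\epsilon,n}(\mu))$, and there is no reason for this quantity to dominate $\Leb(H_K(\sigma)^c)$ at any scale. Finally, ``producing an ergodic weak-SRB-like measure by compactness of $\SM_f$ and ergodic decomposition'' is not available as a starting point -- the paper explicitly cites examples where ergodic SRB-like measures fail to exist, so the existence claimed in the statement is a consequence of the argument, not an input to it.

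The paper's route avoids all of this by changing where the argument starts. It takes $\mu\in\SW_f$ a genuine SRB-like measure (these always exist), so that the basin $A_{\varepsilon}(\mu)$ has \emph{positive} Lebesgue measure; combined with $\Leb(H(\sigma))=1$ one can pick $x\in A_{\varepsilon_1}(\mu)\cap H(\sigma)$ outright and directly deduce (Lemma~\ref{lema-simples1}) that $\int\log\|(Df)^{-1}\|\,d\mu\leq\log\sqrt\sigma<0$, with no selection gymnastics. Passing to the ergodic decomposition then produces an ergodic component $\mu_{y_0}$ with $\int\log\|(Df)^{-1}\|\,d\mu_{y_0}<0$, hence expanding by Birkhoff; since the equilibrium relation passes to a.e.\ ergodic component, Corollary~\ref{corolario:medida-ergodica-expansora-com-pressao-nao-negativa-eh-SRB-like} (which rests on Theorem~\ref{teo*}) shows $\mu_{y_0}$ is weak-SRB-like, establishing existence. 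The ``all expanding ergodic weak-SRB-like measures satisfy Pesin's formula'' clause is then immediate from Theorem~\ref{mthm:formula de pesin para difeo-local com HT}. So the missing idea in your proposal is to exploit the strictly stronger SRB-like property (positive-measure basin) at the outset rather than trying to extract strict expansion from the weak-SRB-like decay rate and an ineffective Ruelle bound.
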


\subsubsection{Weak and non-uniformly expanding maps}
\label{sec:weak-non-uniformly}

Now we strengthen the assumptions on non-uniform expansion
to improve the properties of SRB-like measures.

\begin{definition}\label{def:weak-exp}
  We say that $f$ is weak-expanding if $\|Df(x)^{-1}\|\leq1$
  for all $x\in M$.
\end{definition}

As usual, a probability measure is \emph{atomic} if it is
supported on a finite set. We denote $\supp(\mu)$ the
support of probability measure $\mu$ and recall that
$\psi=-\log Jf=-\log|\det Df|$ in what follows.

\begin{maincorollary}
  \label{mthm:WE}
 Let $f:M \rightarrow M$ be weak-expanding and non-uniformly expanding. Then,
   \begin{enumerate}
   \item all the (necessarily existing) weak-SRB-like probability
     measures are $\psi$-equilibrium states and, in
     particular, satisfy Pesin's Entropy Formula;
     \item there exists some ergodic weak-SRB-like probability
       measure;
     \item if $\psi<0$ (that is, $f$ is volume expanding),
       then there is no atomic weak-SRB-like probability
       measure;
     \item if $\SD=\{x\in M: \|Df(x)^{-1}\|=1\}$ is finite
       and $\psi<0$ then almost all ergodic components of a
       $\psi$-equilibrium state are weak-SRB-like
       measures. Moreover, for all weak-SRB-like probability
       measures $\mu$, its ergodic components $\mu_x$ are
       expanding weak-SRB-like probability measure for
       $\mu$-a.e. $x\in M\backslash \mathcal{D}$.
   \end{enumerate}
 \end{maincorollary}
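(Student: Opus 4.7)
The strategy is first to establish that $P_{\textrm{top}}(f,\psi)=0$ under the standing hypotheses, then invoke Corollary~\ref{mthm:pressao-top-zero-implica-existencia-de-medida-ergodica} for items (1) and (2), and finally to use the ergodic decomposition together with the constraints imposed by $\mathcal{D}$ finite and $\psi<0$ to derive (3) and (4).

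For the upper bound $P_{\textrm{top}}(f,\psi)\le 0$, weak-expansion forces every Lyapunov exponent of every $f$-invariant probability $\mu$ to be non-negative, so by Oseledets $\int\Sigma^+\,d\mu=\int\log|\det Df|\,d\mu=-\int\psi\,d\mu$, and Ruelle's inequality yields $h_\mu(f)+\int\psi\,d\mu\le 0$. For the matching lower bound I use that $\Leb$ is $\psi$-conformal with eigenvalue $1$ for any $C^1$ local diffeomorphism (via the area formula) and that the associated growth of $\mathcal{L}_\psi^n 1$ forces $P_{\textrm{top}}(f,\psi)\ge 0$. With $P_{\textrm{top}}(f,\psi)=0$ in hand, Corollary~\ref{mthm:pressao-top-zero-implica-existencia-de-medida-ergodica} delivers items (1) and (2) immediately: every weak-SRB-like $\mu$ is a $\psi$-equilibrium state, and the identity $h_\mu=-\int\psi\,d\mu=\int\Sigma^+\,d\mu$ under weak-expansion is Pesin's formula.

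Item (3) is then immediate from (1): an atomic $f$-invariant probability is supported on a periodic orbit, so $h_\mu(f)=0$, and under $\psi<0$ we have $\int\psi\,d\mu<0=P_{\textrm{top}}(f,\psi)$; hence $\mu$ is not an equilibrium state and therefore not weak-SRB-like.

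For item (4), let $\mu$ be a $\psi$-equilibrium state with ergodic decomposition $\mu=\int\mu_x\,d\mu(x)$. Affinity of $h_\cdot(f)$ and $\int\psi\,d(\cdot)$, combined with the pointwise Ruelle bound and the equality $h_\mu+\int\psi\,d\mu=0$, forces $h_{\mu_x}+\int\psi\,d\mu_x=0$ for $\mu$-a.e. $x$, so almost every ergodic component is itself an ergodic $\psi$-equilibrium state. I then plan to show these ergodic components are weak-SRB-like by adapting the proof of Corollary~\ref{mthm:pressao-nao-negativa-implica-caracterizacao-dos-estados-de-equilibrio}, relying on the fact that $\Leb$ is $\psi$-conformal and on a Brin--Katok-type bound controlling $h_{\Leb}(f,\mu_x)$ in our $C^1$ setting; this adaptation, replacing the distance-expanding/transitivity hypothesis by weak-expansion plus finiteness of $\mathcal{D}$ (and using item (3) to rule out atomic ergodic components), is the main obstacle I foresee. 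Once the components are weak-SRB-like, item (3) excludes atomic ones, so ergodicity and finiteness of $\mathcal{D}$ force $\mu_x(\mathcal{D})=0$; since $\log\|Df(y)^{-1}\|<0$ for every $y\in M\setminus\mathcal{D}$, one has $\int\log\|Df^{-1}\|\,d\mu_x<0$, and Birkhoff's theorem places $\mu_x$-a.e. point in $H(\sigma)$ for some $\sigma<1$, showing that $\mu_x$ is expanding for $\mu$-a.e. $x\in M\setminus\mathcal{D}$.
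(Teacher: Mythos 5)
Your outline is broadly on the right track for items (1)--(3), but it has two genuine gaps, both tied to how you try to avoid the paper's use of the hyperbolic-time machinery.

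\textbf{Lower bound on the pressure.} The upper bound $P_{\mathrm{top}}(f,\psi)\le 0$ via Ruelle's inequality under weak expansion is fine. But your claim that the $\psi$-conformality of $\Leb$ and ``the associated growth of $\mathcal{L}_\psi^n\mathbf{1}$'' force $P_{\mathrm{top}}(f,\psi)\ge 0$ does not hold up as stated: the identity $\int\mathcal{L}_\psi^n\mathbf{1}\,d\Leb=1$ is an $L^1(\Leb)$ statement, whereas the pressure is governed by suprema over $(n,\varepsilon)$-separated sets (equivalently a $\sup$-type control on $\mathcal{L}_\psi^n\mathbf{1}$), and in the $C^1$ non-expanding setting there is no free Gibbs estimate linking the two. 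In fact, Lemma~\ref{lema1} in the paper (conformal eigenvalue $\Rightarrow$ pressure bound) gives the \emph{upper} bound and even then only under distance expansion. The lower bound the paper actually uses is
Proposition~\ref{teorema:as-medidas-SRB-like-tem-pressao-nao-negativa}, which shows $h_\mu(f)+\int\psi\,d\mu\ge 0$ for every weak-SRB-like $\mu$ and relies crucially on the non-uniform expansion hypothesis through the hyperbolic-time covering lemma. You need that proposition (or an equivalent hyperbolic-time argument); the transfer-operator shortcut is a gap.

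\textbf{Item (4).} Your plan is circular in the wrong direction. You propose to first show that ergodic components are weak-SRB-like (which you yourself flag as the ``main obstacle'') and only afterwards derive that they are expanding. The paper runs the implication the other way, precisely because the ``expanding'' property is what unlocks the weak-SRB-like conclusion. Concretely: from $\psi<0$ and $h_{\mu_x}(f)+\int\psi\,d\mu_x=0$ one gets $h_{\mu_x}(f)>0$, hence $\mu_x$ is non-atomic; since $\mathcal{D}$ is finite and $\mu_x$ is ergodic and non-atomic, $\mu_x(\mathcal{D})=0$, so $\int\log\|Df^{-1}\|\,d\mu_x<0$ and Birkhoff then makes $\mu_x$ expanding. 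Once $\mu_x$ is an \emph{expanding} ergodic measure satisfying Pesin's formula, Corollary~\ref{corolario:toda-medida-ergodica-expansora-que-satisfaz-a-formula-de-Pesin-eh-SRB-like} (via Theorem~\ref{teo*} and Corollary~\ref{corolario:medida-ergodica-expansora-com-pressao-nao-negativa-eh-SRB-like}) directly delivers that $\mu_x$ is weak-SRB-like; there is no need to adapt the distance-expanding large-deviations Corollary~\ref{mthm:pressao-nao-negativa-implica-caracterizacao-dos-estados-de-equilibrio}, whose hypotheses you do not have here. Your last paragraph (excluding atomic components, then $\mu_x(\mathcal{D})=0$, then expanding) is essentially the missing step, but it must come \emph{before}, not \emph{after}, the weak-SRB-like conclusion.

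Item (3) is fine as you state it, and item (2) does indeed follow from Corollary~\ref{mthm:pressao-top-zero-implica-existencia-de-medida-ergodica} once $P_{\mathrm{top}}(f,\psi)=0$ is established (by the correct route above).
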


In particular, we see that an analogous result to the
existence of an atomic physical measure for a quadratic map,
as obtained by Keller in \cite{HK90}, is not
possible in the $C^1$ expanding setting, although
generically such measures must be singular with respect to
any volume from.

\begin{maincorollary}
  No atomic measure is an SRB measure for a $C^1$ uniformly
  expanding map of a compact manifold.
\end{maincorollary}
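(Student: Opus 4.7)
The plan is to deduce this as a direct consequence of Corollary \ref{mthm:WE}, item (3), after checking that a $C^1$ uniformly expanding map on a compact manifold satisfies all the hypotheses of that corollary and, in addition, has $\psi<0$ pointwise.

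First I would verify that uniform expansion implies both weak-expansion and non-uniform expansion. Fix $\lambda>1$ with $\|Df(x)v\|\geq \lambda\|v\|$ for all $x\in M$ and $v\in T_xM$; then $\|Df(x)^{-1}\|\leq \lambda^{-1}<1$, so $f$ is weak-expanding. Moreover, for every $x\in M$,
\[
\limsup_{n\to+\infty}\frac{1}{n}\sum_{j=0}^{n-1}\log\|Df(f^j(x))^{-1}\|\leq -\log\lambda<0,
\]
so, taking any $\sigma\in(\lambda^{-1},1)$, we have $H(\sigma)=M$ and in particular $\Leb(H(\sigma))=1$, which is the definition of non-uniform expansion. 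Next, using that every singular value of $Df(x)$ is bounded below by $\lambda$, I get $|\det Df(x)|\geq \lambda^{m}>1$ for all $x\in M$, where $m=\dim M$, and hence $\psi(x)=-\log|\det Df(x)|\leq -m\log\lambda<0$ uniformly on $M$.

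With these verifications in hand, Corollary \ref{mthm:WE}(3) applies and yields that there is no atomic \emph{weak}-SRB-like probability measure for $f$. To conclude, I would invoke Remark \ref{rmk:toda-medida-SRB-like-eh-weak-SRB-like}: every SRB measure is SRB-like, and every SRB-like measure is weak-SRB-like. Therefore an atomic SRB measure would be an atomic weak-SRB-like measure, contradicting what was just established.

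There is essentially no obstacle: the whole argument is bookkeeping, checking that the $C^1$ uniformly expanding setting fits into the hypotheses of Corollary \ref{mthm:WE}. The only mild point to be careful about is that weak-expansion is defined by the \emph{non-strict} inequality $\|Df(x)^{-1}\|\leq 1$, which is amply satisfied here, and that $\psi<0$ must be understood pointwise on $M$, which follows from the uniform lower bound on the singular values of $Df$.
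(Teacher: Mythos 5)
Your proof is correct and follows exactly the route the paper intends: uniform expansion gives $\|Df^{-1}\|\le\lambda^{-1}<1$ (weak-expansion), $H(\sigma)=M$ for $\sigma\in(\lambda^{-1},1)$ (non-uniform expansion), and $|\det Df|\ge\lambda^m>1$ (so $\psi<0$), whence Corollary~\ref{mthm:WE}(3) excludes atomic weak-SRB-like measures, and Remark~\ref{rmk:toda-medida-SRB-like-eh-weak-SRB-like} upgrades this to exclude atomic SRB measures. This is the same bookkeeping the paper relies on (it states the corollary immediately after Corollary~\ref{mthm:WE} with no separate proof, ``in particular''), so there is nothing to add.
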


We now restate the previous results in the uniformly expanding setting.

\begin{maincorollary}
  \label{mthm:Expansora}
  Let $f:M \rightarrow M$ be a $C^1$-expanding map. Then an
  $f$-invariant probability measure $\mu$ satisfies Pesin's
  Entropy Formula if and only if its ergodic components
  $\mu_x$ are weak-SRB-like $\mu$-a.e. $x\in M$. Moreover,
  all the (necessarily existing) weak-SRB-like probability
  measures satisfy Pesin's Entropy Formula, in particular,
  are $\psi$-equilibrium states. In addition, if $\mu$ is
  the unique weak-SRB-like probability measure, then $\mu$
  is an ergodic SRB probability measure with
  $\Leb(B(\mu))=1$, $\mu$ is the unique $\psi$-equilibrium
  state and satisfies a large deviation bound similar to
  Corollary~\ref{mthm:pressao-nao-negativa-implica-caracterizacao-dos-estados-de-equilibrio}.
\end{maincorollary}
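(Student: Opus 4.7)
The plan is to deduce Corollary~\ref{mthm:Expansora} from the preceding results, particularly Theorem~\ref{mthm:formula de pesin para difeo-local com HT}, Corollary~\ref{mthm:WE}, and Corollary~\ref{mthm:pressao-nao-negativa-implica-caracterizacao-dos-estados-de-equilibrio}. First I record the structural consequences of the $C^1$-expanding hypothesis: there exists $\lambda>1$ with $\|Df(x)^{-1}\|\leq\lambda^{-1}<1$ for every $x\in M$, whence $f$ is weak-expanding, non-uniformly expanding (with $H(\sigma)=M$ for $\sigma=\lambda^{-1}$), and $\psi=-\log|\det Df|\leq -m\log\lambda<0$, where $m=\dim M$. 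Moreover, a standard change-of-variables argument using local injectivity of $f$ gives $\mathcal{L}^*_\psi\Leb=\Leb$, so $\Leb$ is $\psi$-conformal, and a routine application of the variational principle together with the RPF setup from Section~\ref{sec:transfer-operator} yields $P_{\textrm{top}}(f,\psi)=0$.

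Since expansion is strict, the set $\SD=\{x:\|Df(x)^{-1}\|=1\}$ from Corollary~\ref{mthm:WE} is empty, so all four conclusions of that corollary apply to $f$. In particular, item~(1) combined with Theorem~\ref{mthm:formula de pesin para difeo-local com HT} gives that every weak-SRB-like probability measure is a $\psi$-equilibrium state and satisfies Pesin's Entropy Formula, establishing the second assertion of Corollary~\ref{mthm:Expansora}. Here I use the elementary observation that for any $f$-invariant $\mu$ one has, by Oseledets, $\int\Sigma^+\,d\mu=\int\log|\det Df|\,d\mu=-\int\psi\,d\mu$, so Pesin's Formula is equivalent to $h_\mu(f)+\int\psi\,d\mu=0=P_{\textrm{top}}(f,\psi)$, i.e.\ to being a $\psi$-equilibrium state.

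For the characterization, the direction ``ergodic components weak-SRB-like $\Rightarrow$ Pesin's Formula for $\mu$'' follows by applying Theorem~\ref{mthm:formula de pesin para difeo-local com HT} to $\mu$-almost every ergodic component $\mu_x$ (which is automatically expanding since $f$ is uniformly expanding) to obtain $h_{\mu_x}(f)=\int\Sigma^+\,d\mu_x$, and then integrating using the Jacobs theorem on entropy of the ergodic decomposition together with Fubini. The reverse direction uses item~(4) of Corollary~\ref{mthm:WE}: the equivalence noted in the previous paragraph shows $\mu$ is a $\psi$-equilibrium state, and hence $\mu$-a.e.\ ergodic component is weak-SRB-like.

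Finally, for the uniqueness assertion, if $\mu$ is the unique weak-SRB-like probability measure then item~(4) of Corollary~\ref{mthm:WE} applied to any $\psi$-equilibrium state $\mu'$ shows that $\mu'$-a.e.\ ergodic component of $\mu'$ is weak-SRB-like, hence equal to $\mu$; ergodic decomposition then forces $\mu'=\mu$, so $\mu$ is ergodic and the unique $\psi$-equilibrium state. I then invoke Corollary~\ref{mthm:pressao-nao-negativa-implica-caracterizacao-dos-estados-de-equilibrio} with $\nu=\Leb$ and $\phi=\psi$ to obtain that $\mu$ is $\Leb$-SRB with $\Leb(B(\mu))=1$ and satisfies the stated large-deviation upper bound. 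The step I expect to require the most care is verifying the hypothesis $h_{\Leb}(f,\mu)<\infty$ needed to apply Corollary~\ref{mthm:pressao-nao-negativa-implica-caracterizacao-dos-estados-de-equilibrio}; this should follow from the uniform lower bound $\Leb(B(x,n,\delta))\geq c\lambda^{-nm}$, valid for $\delta$ smaller than an injectivity radius for $f$, which gives $h_{\Leb}(f,x)\leq m\log\lambda$ for every $x\in M$ and hence $h_{\Leb}(f,\mu)\leq m\log\lambda<\infty$.
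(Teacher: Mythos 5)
Your proof is correct and uses the same underlying ingredients as the paper's, but it is organized differently: you route everything through Corollary~\ref{mthm:WE} (items~1 and~4, noting that $\SD=\emptyset$ in the strictly expanding case), whereas the paper's proof instead invokes Proposition~\ref{teorema:as-medidas-SRB-like-tem-pressao-nao-negativa} together with Ruelle's inequality to get $P_{\textrm{top}}(f,\psi)=0$ and that all weak-SRB-like measures are equilibrium states, and then uses Corollary~\ref{corolario:medida-ergodica-expansora-com-pressao-nao-negativa-eh-SRB-like} directly for the ``Pesin's formula implies weak-SRB-like ergodic components'' direction. The two routes are mathematically equivalent since Corollary~\ref{mthm:WE} is proved from exactly those results. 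For the unique weak-SRB-like case you do something genuinely extra: you explicitly derive ergodicity of $\mu$ and uniqueness of the $\psi$-equilibrium state from item~(4) of Corollary~\ref{mthm:WE}, and then apply Corollary~\ref{mthm:pressao-nao-negativa-implica-caracterizacao-dos-estados-de-equilibrio} with $\nu=\Leb$, $\phi=\psi$ for the SRB, full-basin, and large-deviation conclusions; the paper instead cites \cite[item G of Theorem 1]{catsigeras2016weak} for the SRB and full-basin part and leaves ergodicity and uniqueness of the equilibrium state implicit, deferring only the large-deviation bound to Subsection~\ref{sec:proof-coroll-refmthm}. There is no circularity in either route since the proof of Corollary~\ref{mthm:pressao-nao-negativa-implica-caracterizacao-dos-estados-de-equilibrio} does not depend on Corollary~\ref{mthm:Expansora}. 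One small inaccuracy in your final paragraph: the lower bound $\Leb(B(x,n,\delta))\geq c\lambda^{-nm}$ does not follow from $\|Df^{-1}\|\leq\lambda^{-1}$ alone ($\lambda$ controls the smallest, not the largest, singular value); the correct elementary estimate is $\Leb(B(x,n,\delta))\geq\alpha(\delta)\bigl(\sup_M|\det Df|\bigr)^{-n}$, obtained because $f^n$ maps $B(x,n,\delta)$ diffeomorphically onto $B(f^n(x),\delta)$. This still yields $h_{\Leb}(f,\mu)<\infty$; and in any case the proof of Theorem~\ref{teo*} already handles $h_{\Leb}(f,\mu)=\infty$ explicitly, so the hypothesis is not really a constraint here.
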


Using this we get a weak statistical stability result in the
uniformly $C^1$-expanding setting: all weak$^*$ accumulation
points of weak-SRB-like measures are generalized convex
linear combinations of ergodic weak-SRB-like measures, as
follows.

\begin{maincorollary}
\label{mthm:estabilidade-estatistica}
Let $\{f_n:M\to M\}_{n\geq1}$ be a sequence of
$C^1$-expanding maps such that $f_n\xrightarrow[]{}f$ in the
$C^1$-topology and $f:M\to M$ be a $C^1$-expanding map. Let
$(\mu_n)_{n\geq1}$ be a sequence of weak-SRB-like measures
associated $f_n$. Then each accumulation point $\mu$ of
$(\mu_n)_{n\geq1}$ is an equilibrium state for the potential
$\psi=-\log|\det Df|$ (in particular, satisfies Pesin's
Entropy Formula) and almost all ergodic components of $\mu$
are weak-SRB-like measures.
\end{maincorollary}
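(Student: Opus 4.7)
The plan is to pass to the weak$^*$ limit in the equilibrium identity satisfied by each $\mu_n$, leveraging joint upper semi-continuity of metric entropy as both the dynamics and the measure vary. The main obstacle is this joint upper semi-continuity; everything else is clean limiting.

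First I normalize the topological pressure. Write $\psi_n=-\log|\det Df_n|$ and $\psi=-\log|\det Df|$. For any $C^1$ local diffeomorphism $g$, the change-of-variables formula yields $\mathcal{L}^*_{-\log|\det Dg|}\Leb=\Leb$, so $\Leb$ is conformal with eigenvalue $1$; when $g$ is uniformly expanding this forces $P_{\textrm{top}}(g,-\log|\det Dg|)=0$. Applied to each $f_n$, together with Corollary~\ref{mthm:Expansora} (which says every weak-SRB-like measure of a $C^1$-expanding map is a $\psi$-equilibrium state), this gives
\[
h_{\mu_n}(f_n)+\int\psi_n\,d\mu_n=P_{\textrm{top}}(f_n,\psi_n)=0.
\]
Since $f_n\to f$ in $C^1$, $\psi_n\to\psi$ uniformly on $M$, so combining uniform convergence with $\mu_n\xrightarrow{w^*}\mu$ gives $\int\psi_n\,d\mu_n\to\int\psi\,d\mu$; hence $h_{\mu_n}(f_n)\to -\int\psi\,d\mu$.

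Next I establish the upper semi-continuity $\limsup_n h_{\mu_n}(f_n)\leq h_\mu(f)$. Since $f$ is $C^1$-expanding it is expansive with some constant $\delta>0$, and $C^1$-closeness of $f_n$ to $f$ yields, for all large $n$, a common expansivity constant $\delta'\in(0,\delta)$ together with uniform expansion rates. I pick a finite partition $\xi$ of $M$ with $\diam\xi<\delta'$ and $\mu(\partial\xi)=0$; then $\xi$ is generating for every $f_n$ (large $n$) and for $f$, so $h_{\mu_n}(f_n)=h_{\mu_n}(f_n,\xi)$ and $h_\mu(f)=h_\mu(f,\xi)$. For any fixed $k\geq 1$,
\[
h_{\mu_n}(f_n)\leq\frac{1}{k}\,H_{\mu_n}\!\Bigl(\bigvee_{j=0}^{k-1}f_n^{-j}\xi\Bigr);
\]
since $f_n\to f$ uniformly, the symmetric differences $f_n^{-j}(A)\triangle f^{-j}(A)$ have $\mu$-measure tending to $0$ for each atom $A$ of $\xi$, and combined with $\mu_n\xrightarrow{w^*}\mu$ and $\mu\bigl(\partial\bigvee_{j=0}^{k-1}f^{-j}\xi\bigr)=0$ this yields $H_{\mu_n}(\bigvee_{j=0}^{k-1}f_n^{-j}\xi)\to H_\mu(\bigvee_{j=0}^{k-1}f^{-j}\xi)$. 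Taking $\limsup$ in $n$ and then $\inf$ over $k$ produces the stated bound.

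Combining the previous steps with the variational principle $h_\mu(f)+\int\psi\,d\mu\leq P_{\textrm{top}}(f,\psi)=0$ gives
\[
-\int\psi\,d\mu=\lim_n h_{\mu_n}(f_n)\leq h_\mu(f)\leq -\int\psi\,d\mu,
\]
so equality holds throughout and $\mu$ is a $\psi$-equilibrium state for $f$. By Corollary~\ref{mthm:Expansora}, $\mu$ then satisfies Pesin's Entropy Formula, and the \emph{if and only if} part of that corollary forces $\mu$-almost every ergodic component of $\mu$ to be weak-SRB-like for $f$. The hard step is the joint upper semi-continuity: matching iterated preimages under $f_n$ with those under $f$ well enough for the entropies of the refined partitions to converge, while controlling all boundary effects relative to the limit measure $\mu$.
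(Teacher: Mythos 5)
Your proposal is correct and takes essentially the same approach as the paper: both normalize $P_{\textrm{top}}(f_n,\psi_n)=0$ and invoke Corollary~\ref{mthm:Expansora} to get $h_{\mu_n}(f_n)+\int\psi_n\,d\mu_n=0$, both prove joint upper semi-continuity of $h_{\mu_n}(f_n)$ via a single finite generating partition with $\mu$-null boundary, and both close the chain with Ruelle's inequality and a second application of Corollary~\ref{mthm:Expansora} for the ergodic components. Your spelling-out of the symmetric-difference control $\mu(f_n^{-j}(A)\triangle f^{-j}(A))\to 0$ is exactly the step the paper compresses into ``$H(\SP_{n_j}^k,\mu_{n_j})\to H(\SP^k,\mu)$''.
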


\subsection{Further Questions}
\label{sec:further-questions}

Here we list some questions that have arisen during the
development of this work.

\begin{question}
  Under the same assumptions of
  Corollary~\ref{mthm:pressao-nao-negativa-implica-caracterizacao-dos-estados-de-equilibrio}
  can we obtain a lower bound of large deviations with the
  same rate? (Perhaps assuming that the dynamics is
  topologically mixing?)
\end{question}

\begin{question}
  Is it possible to obtain a statistical property for the
  weak-SRB-like measures, as in the case of
  Corollary~\ref{mthm:pressao-nao-negativa-implica-caracterizacao-dos-estados-de-equilibrio}
  for $C^1$-weak expanding and non uniformly expanding maps?
\end{question}

\begin{question}
  In Corollary~\ref{mthm:estabilidade-estatistica} is it
  possible to show that the limit measure is weak-SRB-like?
\end{question}

Some examples that have naturally arisen during the
development of this work motivate further questions.

The first example is attributed to Bowen and can be found in
greater detail in \cite[Example 5.5]{CatsEnrich2011}.

\begin{example}\label{examplo:olhodeBowen}
  Consider a diffeomorphism $f$ in a ball of $\RR^2$ with
  two hyperbolic saddle points $A$ and $B$ such that a
  connected component of the unstable global manifold
  $W^u(A)\backslash\{A\}$ is an embedded arc that coincides
  with a connected component of the stable global manifold
  $W^s(B)\backslash\{B\}$, and conversely, the embedded arc
  $W^u(B)\backslash\{B\} = W^s(A)\backslash\{A\}$. Take $f$
  such that there exists a source $C\subset U$ where $U$ is
  the open ball with boundary $W^u(A)\cup W^u(B)$.

\begin{figure}[!htb]
\begin{center}
\begin{minipage}{ \linewidth}
         \centering
          \includegraphics[scale=0.5]{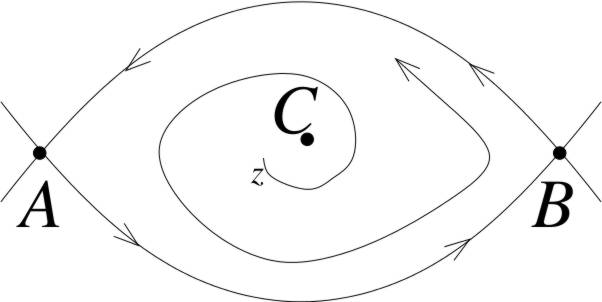}
           \caption{Bowen’s eye. }
           \label{fig:olhodeBowen }
          \end{minipage}
      \end{center}
\end{figure}

If the eigenvalues of the derivative of $f$ at $A$ and $B$
are adequately chosen as specified in \cite{Ta95,gol07},
then the empiric sequence (\ref{11}) for all
$x\in U\backslash\{C\}$ is not convergent. It has at least
two subsequences convergent to different convex combinations
of the Dirac deltas $\delta_A$ and $\delta_B$.

Thus, as observed in \cite{CatsEnrich2011}, the SRB-like
probability measures are convex linear combinations of
$\delta_A$ and $\delta_B$ and form a segment in the space
$\SM$ of probability measures. This example shows that the
SRB-like measures are not necessarily ergodic.

Moreover, the eigenvalues of $Df$ at the saddles $A$ and $B$
can be modified to obtain, instead of the result above, the
convergence of the sequence (1.1) as stated in Lemma (i) of
page 457 in \cite{TAKAHASHI84}. In fact, taking conservative
saddles (and $C^0$ perturbing $f$ outside small
neighborhoods of the saddles $A$ and $B$ so the topological
$\omega$-limit of the orbits in $U\backslash\{C\}$ still
contains $A$ and $B$), one can get for all
$x\in U\backslash\{C\}$ an empirical sequence \eqref{11}
that is convergent to a single measure
$\mu =\lambda\delta_A+(1-\lambda)\delta_B$, with a fixed
constant $0 < \lambda < 1$. So, $\mu$ is the unique SRB-like
measure. This proves that the set of SRB-like probability
measures does not depend continuously on the map.
\end{example}

This example motivates the following questions:
\begin{question}
  When can we say that a dynamic system $(f,\phi)$ has
  ergodic $\nu_{\phi}$-SRB-like measures?
\end{question}

From Example \ref{examplo:olhodeBowen}, we know that the set
of $\nu_{\phi}$-SRB-like probability measures does not in
general depend continuously on the map.

\begin{question}\label{question1}
  Fixing the potential $\phi$, is there continuous
  dependence of $\nu_{\phi}$-SRB-like probability measures
  as functions of the underlying map (in the $C^1$
  topology)?
\end{question}

Recently \cite{AlRaSiq} a result on this direction was
obtained for H{\"o}lder and hyperbolic potentials on
non-uniformly expanding maps.

\begin{question}
  It there continuous dependence of $\nu_{\phi}$-SRB-like
  probability measures as functions of the potential $\phi$?
\end{question}
A positive response in this direction is given by Theorem
\ref{mthm:theorem-conv-das-medidas-tipo-SRB}, however we
still do not know in general whether the limit measure is
ergodic.

Recently \cite{Cat17} shows that under general random
perturbations only SRB measures can be empirically
stochastically stable.

\begin{question}
  Can we obtain stochastic stability for $SRB$-like measures
  under certain random perturbations?
\end{question}

The next example is an adaptation of the ‘intermittent’
Manneville map into a local homeomorphism of the circle.

\begin{example}\label{ex:intermittent}
  Consider $I=[-1, 1]$ and the map $\hat{f}:I\rightarrow I$
  (see Figure \ref{fig 1}) given by
$$\hat{f}(x)= \left\{ \begin{array}{ll}
2\sqrt{x}-1 & \textrm{if $x\geq0,$}\\
1-2\sqrt{|x|} & \textrm{otherwise.}
\end{array} \right.$$

This map induces a continuous local homeomorphism
$f:\mathbb{S}^1\rightarrow\mathbb{S}^1$ through the
identification $\mathbb{S}^1= I/\sim$, where $-1\sim 1$, not
differentiable at the point $0$. This is a map
differentiable everywhere except at a single point, having a
positive frequency of hyperbolic times at Lebesgue almost
every point as can be seen in \cite[section
5]{alves-araujo2004}.

\begin{figure}[!htb]
\begin{center}
\begin{minipage}{ \linewidth}
         \centering
          \includegraphics[scale=0.5]{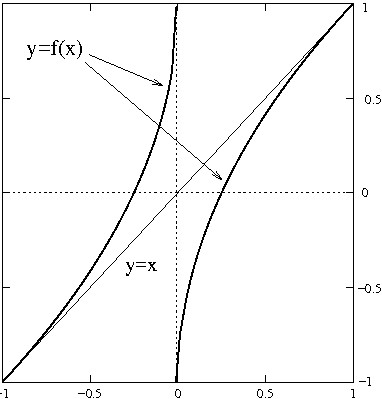}
           \caption{Graph of $\hat{f}$. }
           \label{fig 1}
          \end{minipage}
      \end{center}
\end{figure}
\end{example}

This example is weak expanding and non uniformly expanding
and suggests that we can think of a generalization of
Theorem
\ref{mthm:Toda-medida-nu-SRB-like-e-estado-de-equilibrio}
using the same ideas of Theorem \ref{mthm:formula de pesin
  para difeo-local com HT}, replacing Lebesgue measure by an
expanding $\phi$-conformal measure $\nu$ for
$\phi\in C^0(X,\RR)$ and the potential $\psi=-\log Jf$ by
$ -\log J_{\nu} f$, where $J_{\nu} f$ is the Jacobian $f$
with respect to $\nu$.

\begin{question}
  Let $T:X\rightarrow X$ be a local homeomorphism (not
  necessarily distance expanding) of the compact metric
  space $X$ and let $\phi:X\rightarrow \RR$ be a continuous
  potential. If there exists an expanding $\phi$-conformal
  measure $\nu$, then all the (necessarily existing)
  $\nu$-SRB-like measures are equilibrium states for the
  potential $\phi$?
\end{question}

It may be necessary to consider additional hypotheses about
the continuous potentials, e.g. that they are
\emph{hyperbolic potentials} (see definition and results
about H\"older hyperbolic potentials in
\cite{RamosViana2017}).

%

\subsection{Organization of the Paper}
\label{sec:organization-paper}

We present in Section \ref{sec:exampl-applic} some examples
of application for the main results.  Section
\ref{sec:Preliminary definitions and results} contains
preparatory material of the theory of $\nu$-weak-SRB-like
measures and measure-theoretic entropy that will be
necessary for the proofs.  In Sections \ref{sec:Continuous
  variation of conformal measures and SRB-like measures} and
\ref{sec:distance-expanding-maps} we prove Theorems
\ref{mthm:theorem-conv-das-medidas-tipo-SRB} and
\ref{mthm:Toda-medida-nu-SRB-like-e-estado-de-equilibrio}. In
Section \ref{sec:Pesin's Entropy Formula} we use the
properties of hyperbolic times and weak-SRB-like measures to
prove a reformulation of Pesin's Entropy Formula for $C^1$
non-uniformly expanding local diffeomorphism (Theorem
\ref{mthm:formula de pesin para difeo-local com HT}). In
Section \ref{sec:Ergodic SRB-like measure} we prove
Corollary
\ref{mthm:pressao-top-zero-implica-existencia-de-medida-ergodica}. Finally,
in Section \ref{sec:weak-expanding} we prove the results
stated in Corollaries \ref{mthm:WE},
\ref{mthm:pressao-nao-negativa-implica-caracterizacao-dos-estados-de-equilibrio},
\ref{mthm:Expansora} and
\ref{mthm:estabilidade-estatistica}.

\subsection*{Acknowledgments}
\label{sec:acknowledgments}

This is the PhD thesis of F. Santos at the Instituto de
Matematica e Estatistica-Universidade Federal da Bahia
(UFBA) under a CAPES scholarship. He thanks the Mathematics
and Statistics Institute at UFBA for the use of its
facilities and the finantial support from CAPES during his
M.Sc. and Ph.D. studies. The authors thank the anonymous
referee for the careful reading and the many useful
suggestions that greatly helped to improve the quality of
the text.

\section{Examples of application}
\label{sec:exampl-applic}

Here we present some examples of application. In the first
subsection we describe the construction of a $C^1$ uniformly
expanding map with many different SRB-like measures which,
in particular, satisfies the assumptions of Theorem
\ref{mthm:Toda-medida-nu-SRB-like-e-estado-de-equilibrio}
and Corollary \ref{mthm:Expansora}.

In the second subsection we present a class of non uniformly
expanding $C^1$ local diffeomorphisms satisfying the
assumptions of Theorem \ref{mthm:formula de pesin para
  difeo-local com HT}.

In the third subsection we exhibit examples of
weak-expanding and non uniformly expanding transformations
which are not uniformly expanding in the setting of
Corollary \ref{mthm:WE}.

\subsection{Expanding map with several absolutely continuous
  invariant pro\-ba\-bi\-li\-ty measures}
\label{sec:expanding map}

We present the construction of a $C^1$-expanding map
$f:\sS^1\rightarrow\sS^1$ which is not of class
$C^{1+\epsilon}$ for any $0<\epsilon<1$ and that has several
SRB-like measures. Moreover, such measures are absolutely
continuous with respect to Lebesgue measure (such $f$ does
not belong to the $C^1$ generic subset of $C^1(M,M)$ found
in \cite{AB06}).

\begin{example}
  Following Bowen~\cite{Bo75b}, we first adopt some notation for
  Cantor sets with positive Lebesgue measure. Let $I$ be a
  closed interval and $\alpha_n>0$ numbers with
  $\sum_{n=0}^{\infty}\alpha_n<|I|$, where $|E|$ denotes the
  one-dimensional Lebesgue measure of the subset $E$ of
  $I$. Let $\underline{a}=a_1a_2\ldots a_n$ denote a
  sequence of $0's$ and $1's$ of length
  $n=n(\underline{a})$; we denote the empty sequence
  $\underline{a}=\emptyset$ with
  $n(\underline{a})=0$. Define $I_{\emptyset}=I=[a,b]$,
  $I_{\emptyset}^*=\left[\frac{a+b}{2}-\frac{\alpha_0}{2},\frac{a+b}{2}+\frac{\alpha_0}{2}\right]$
  and $I_{\underline{a}}^*\subset I_{\underline{a}}$
  recursively as follows.

  Let $I_{\underline{a}0}$ and $I_{\underline{a}1}$ be the
  left and right intervals remaining when the interior of
  $I_{\underline{a}}^*$ is removed from $I_{\underline{a}}$;
  let $I_{\underline{a}}^*$ be the closed interval of length
  $\frac{\alpha_{n(\underline{a}k)}}{2^{n(\underline{a}k)}}$
  and having the same center as $I_{\underline{a}k}$
  (k=0,1).

  The Cantor set $K_I$ is given by
  $K_I=\bigcap\limits_{m=0}^{\infty}\bigcup\limits_{n(\underline{a})=m}I_{\underline{a}}.$

  This is the standard construction of the Cantor set except
  that we allow ourselves some flexibility in the lengths of
  the removed intervals. The measure of $K_I$ is
  $\Leb(K_I)=|I|-\sum\limits_{n=0}^{\infty}\alpha_n>0$.

  Suppose that another interval $J\supset I$ is given
  together with $\beta_n>0$ such that
  $\sum_{n=0}^{\infty}\beta_n<|J|$. One can then construct
  $J_{\underline{a}}$, $J_{\underline{a}}^*$ and $K_J$ as
  above. Let us assume now that
  $\frac{\beta_n}{\alpha_n}\xrightarrow[n\rightarrow
  \infty]{} \gamma\geq0$. Following the cons\-truc\-tion of
  Bowen \cite{Bo75b}, we get $g:I\rightarrow J$ a $C^1$
  orientation preserving homeomorphism so that
  $g'(x)=\gamma$ for all $x\in K_I$ and $g'(x)>1$ for all
  $x\in I$.

  More precisely, let us take $J=[-1,1]$ and choose
  $\beta_n>0$ with $\sum_{n=0}^{\infty}\beta_n<2$ and
  $\frac{\beta_{n+1}}{\beta_n}\rightarrow1$,
  $\left(e.g. \ \beta_n=\frac{1}{(n+100)^2}\right)$. Let
  $I=\left[\frac{\beta_0}{2},1\right]$ and
  $\alpha_n=\frac{\beta_{n+1}}{2}$. Then
  $\sum_{n=0}^{\infty}\alpha_n<1-\frac{\beta_0}{2}$ and
  $\gamma=\lim\frac{\beta_{n}}{\alpha_n}=2$. We define a
  homeomorphism $G:(-I)\cup I\rightarrow J$ by
  $G(x)= \left\{ \begin{array}{ll}
                   g(x) & \textrm{if $x\in I$}\\
                   -g(-x) & \textrm{if $x\in (-I)$}
\end{array} \right.,$ where $K_J=\bigcap_{n=0}^{+\infty}G^{-n}(J)$ and $G|_{K_J}:K_J\circlearrowleft$.

Consider now
$c_1=\left(\frac{3\beta_0}{2}-2\right)\left(\frac{4}{\beta_0}\right)^3$,
$c_2=\left(\frac{9\beta_0}{4}-3\right)\left(\frac{4}{\beta_0}\right)^2$;
$f_1:[-\frac{\beta_0}{2},-\frac{\beta_0}{4}]\rightarrow[-1,-\frac{\beta_0}{4}]$
given by
$$f_1(x)=c_1\left(x+\frac{\beta_0}{2}\right)^3-c_2\left(x+\frac{\beta_0}{2}\right)^2+2\left(x+\frac{\beta_0}{2}\right)-1$$
and
$f_2:[\frac{\beta_0}{4},\frac{\beta_0}{2}]\rightarrow[\frac{\beta_0}{4},1]$
given by
$f_2(x)=c_1\left(x-\frac{\beta_0}{2}\right)^3+c_2\left(x-\frac{\beta_0}{2}\right)^2+2\left(x-\frac{\beta_0}{2}\right)+1$. Then,
we have
\begin{enumerate}
 \item $f_1\left(-\frac{\beta_0}{2}\right)=-1$, \  $f_1\left(-\frac{\beta_0}{4}\right)=-\frac{\beta_0}{4}$, \ $f_2\left(\frac{\beta_0}{4}\right)=\frac{\beta_0}{4}$ and $f_2\left(\frac{\beta_0}{2}\right)=1$
\item $f_1^+\left(\frac{\beta_0}{2}\right)=\lim\limits_{h\rightarrow 0^+}\frac{f_1(\frac{\beta_0}{2}+h)-f_1(\frac{\beta_0}{2})}{h}=2$, \ $f_2^+\left(\frac{\beta_0}{4}\right)=2$,\ $f_1^-\left(\frac{\beta_0}{4}\right)=\lim\limits_{h\rightarrow 0^-}\frac{f_1(\frac{\beta_0}{4}+h)-f_1(\frac{\beta_0}{4})}{h}=2$ and $f_2^-\left(\frac{\beta_0}{2}\right)=2.$
    \end{enumerate}

    Consider now
    $
    J_1=\left[-\frac{\beta_0}{4},\frac{\beta_0}{4}\right]$,
    $I_1=\left[\frac{\beta_0^2}{8},\frac{\beta_0}{4}\right]$
    and choose $\beta_n'=\frac{\beta_0\beta_n}{4}>0$ with
    $\sum_{n=0}^{\infty}\beta_n'<\frac{\beta_0}{2}$ and
    $\frac{\beta_{n+1}'}{\beta_n'}=\frac{\beta_{n+1}}{\beta_n}\rightarrow1$. Let
    $\alpha_n'=\frac{\beta_{n+1}'}{2}$, then
    $\sum_{n=0}^{\infty}\alpha_n'=\sum_{n=0}^{\infty}\frac{\beta_{n+1}'}{2}<\frac{1}{2}\left(\frac{\beta_0}{2}-\beta_0'\right)=\frac{1}{2}\left(\frac{\beta_0}{2}-\frac{\beta_0^2}{4}\right)$
    and
    $\gamma=\lim\frac{\beta_{n}'}{\alpha_n'}=\lim\frac{2\beta_{n}}{\beta_{n+1}}=2$.

    Similarly to the above construction, we obtain a
    homeomorphism $G_1:(-I_1)\cup I_1\rightarrow J_1$ given
    by
$$G_1(x)= \left\{ \begin{array}{ll}
g_1(x) & \textrm{if $x\in I_1$}\\
-g_1(-x) & \textrm{if $x\in (-I_1)$}
                  \end{array} \right.,$$
                where $K_{J_1}=\bigcap_{n=0}^{+\infty}G_1^{-n}(J_1)$ is a Cantor set  with positive Lebesgue measure, $G_1|_{K_{J_1}}:K_{J_1}\circlearrowleft$ and $g_1:I_1\rightarrow J_1$ is a $C^1$ orientation preserving homeomorphism so that $g_1'(x)=2$ for all $x\in K_{J_1}$ and $g_1'(x)>1$ for all $x\in I_1$.

                Similarly we obtain
                $f_3:[0,\frac{\beta_0^2}{8}]\rightarrow[-1,-\frac{\beta_0}{4}]$
                and
                $f_4:[-\frac{\beta_0^2}{8},0]\rightarrow[\frac{\beta_0}{4},1]$
                such that $f_3(0)=-1$,
                $f_3(\frac{\beta_0^2}{8})=-\frac{\beta_0}{4}$,
                $f_3^-(\frac{\beta_0^2}{8})=2$,
                $f_3^+(0)=2$,
                $f_4(\frac{-\beta_0^2}{8})=\frac{\beta_0}{4}$,
                $f_4(0)=1$, $f_4^+(-\frac{\beta_0^2}{8})=2$
                and $f_4^-(0)=2$

Finally, define the function $f:J\rightarrow J$ by (see Figure \ref{fig2} for its graph)
$$f(x)= \left\{ \begin{array}{ll}
G(x) & \textrm{if $x\in (-I)\cup I$}\\
f_1(x) & \textrm{if $x\in \left(-\frac{\beta_0}{2},-\frac{\beta_0}{4}\right)$}\\
f_2(x) & \textrm{if $x\in \left(\frac{\beta_0}{4},\frac{\beta_0}{2}\right)$}\\
G_1(x) & \textrm{if $x\in (-I_1)\cup I_1$}\\
f_4(x) & \textrm{if $x\in \left(-\frac{\beta_0^2}{8},0\right)$}\\
f_3(x) & \textrm{if $x\in \left(0,\frac{\beta_0^2}{8}\right)$}
\end{array} \right.,$$

\begin{figure}[!htb]
\begin{center}
\begin{minipage}{ \linewidth}
         \centering
          \includegraphics[scale=0.3]{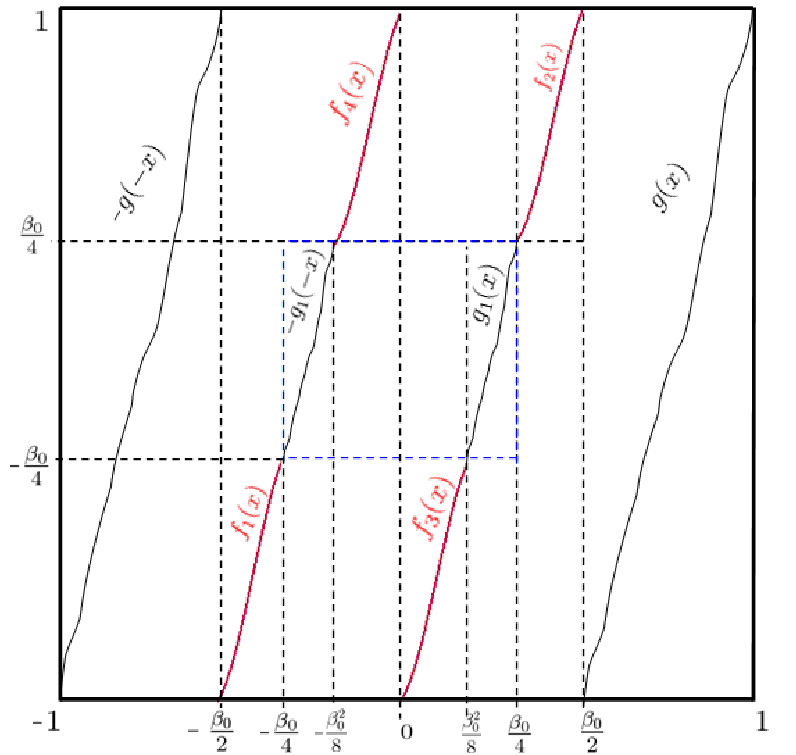}
           \caption{Graph of $f$. }
           \label{fig2}
          \end{minipage}
      \end{center}
\end{figure}

Identifying $-1$ and $1$ and making a linear change of
coordinates we obtain a $C^1$ expanding map of the circle
$f:S^1 \circlearrowleft$ which is not of class
$C^{1+\epsilon}$ for any $\epsilon>0$; see Bowen~\cite{Bo75b}.

Consider $\Leb_{K_J}$ the normalized Lebesgue measure of the
set $K_J$, this is,
$\Leb_{K_J}(A)=\Leb(A\cap K_J)/\Leb(K_J)$ for all measurable
$A\subset J$. Denote $ \Sigma^+_2=\{0,1\}^{\NN}$ and
consider the homeomorphism $h:K_J\rightarrow \Sigma^+_2$
that associates each point $x\in K_J$ the sequence
$\underline{a}\in \Sigma^+_2$ describing its location in the
set $K_J$, this is, $\underline{a}$ is such that
$I_{\underline{a}}\cap K_J=\{x\}$. Let $\mu$ be the
Bernoulli measure in $\Sigma^+_2$ giving weight 1/2 to each
digit.

\begin{claim}
$\mu=h_*\Leb_{K_J}$
\end{claim}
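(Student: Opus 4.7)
The plan is to verify the equality on the cylinder sets that generate the Borel $\sigma$-algebra of $\Sigma_2^+$, and then invoke the uniqueness part of the Carathéodory extension theorem. More precisely, for each finite word $\underline{a}=a_1\ldots a_n\in\{0,1\}^n$, the cylinder $[\underline{a}]\subset\Sigma_2^+$ satisfies $h^{-1}([\underline{a}])=I_{\underline{a}}\cap K_J$ by the very definition of the coding $h$. Since $\mu([\underline{a}])=2^{-n}$, it suffices to prove
\[
\Leb_{K_J}(I_{\underline{a}}\cap K_J)=\frac{1}{2^{n}}\qquad\text{for every word }\underline{a}\text{ of length }n.
\]

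The key observation is the symmetry built into the construction: by definition, $I_{\underline{a}}^*$ is the closed interval \emph{centered} at the midpoint of $I_{\underline{a}}$, so the two remaining pieces $I_{\underline{a}0}$ and $I_{\underline{a}1}$ are reflections of each other across that midpoint and therefore have equal one-dimensional Lebesgue measure. The same reflection maps $I_{\underline{a}0}\cap K_J$ bijectively onto $I_{\underline{a}1}\cap K_J$ (this is clear by inducting on the level, since the rule for placing $I_{\underline{a}k}^*$ depends only on the length $n(\underline{a}k)$), so
\[
\Leb(I_{\underline{a}0}\cap K_J)=\Leb(I_{\underline{a}1}\cap K_J).
\]

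Now I would argue by induction on $n=n(\underline{a})$. For $n=0$, $I_\emptyset\cap K_J=K_J$, so $\Leb_{K_J}(I_\emptyset\cap K_J)=1=2^{-0}$. Assuming the identity holds for words of length $n$, write $\underline{a}=\underline{b}k$ with $n(\underline{b})=n-1$; since $K_J\cap I_{\underline{b}}^*=\emptyset$ (the removed intervals are disjoint from $K_J$ by construction), we have
\[
\Leb(I_{\underline{b}}\cap K_J)=\Leb(I_{\underline{b}0}\cap K_J)+\Leb(I_{\underline{b}1}\cap K_J)=2\,\Leb(I_{\underline{a}}\cap K_J),
\]
so $\Leb_{K_J}(I_{\underline{a}}\cap K_J)=\tfrac12\Leb_{K_J}(I_{\underline{b}}\cap K_J)=2^{-n}$ by the inductive hypothesis, which closes the induction.

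Finally, the collection of cylinders $\{[\underline{a}]:\underline{a}\text{ finite word}\}$ is a $\pi$-system generating the Borel $\sigma$-algebra of $\Sigma_2^+$, and $h_*\Leb_{K_J}$ and $\mu$ are probability measures agreeing on this $\pi$-system; by Dynkin's $\pi$-$\lambda$ theorem they coincide on all Borel sets, which proves the claim. I don't anticipate any real obstacle: the only point requiring care is the symmetry argument showing that the two halves $I_{\underline{a}0}\cap K_J$ and $I_{\underline{a}1}\cap K_J$ have equal Lebesgue measure, and this follows immediately from the fact that the deleted intervals $I_{\underline{c}}^*$ are centered in $I_{\underline{c}}$ at every stage.
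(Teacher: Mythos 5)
Your proof is correct and proceeds along essentially the same lines as the paper's: reduce to cylinder sets via $h^{-1}([\underline{a}])=I_{\underline{a}}\cap K_J$, show $\Leb_{K_J}(I_{\underline{a}}\cap K_J)=2^{-n}$, and extend by a generating-class argument. The only difference is cosmetic: you justify the equal-mass property of the $2^n$ pieces at level $n$ by a reflection-symmetry induction (two children of each $I_{\underline{a}}$ are isometric, hence carry equal $\Leb$-measure of $K_J$), whereas the paper observes directly that the removed middle intervals have length depending only on the word length $n(\underline{a})$, so all $2^n$ level-$n$ intervals (and the Cantor pieces inside them) are congruent; both observations encode the same structural symmetry. (One small indexing slip: your inductive step should assume the identity for words of length $n-1$, not $n$, since you set $n(\underline{b})=n-1$; the argument is otherwise sound.)
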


We show that this relation holds for an algebra of subsets
which ge\-ne\-ra\-tes the Borel $\sigma$-algebra of
$\Sigma^+_2$. For $\underline{a}=a_1a_2\ldots a_n$ we have
that $\mu([\underline{a}])=1/2^n$. Moreover,
$h^{-1}([\underline{a}])= I_{\underline{a}}\cap K_J$, so
$m_{K_J}(h^{-1}([\underline{a}]))= m_{K_J}(
I_{\underline{a}}\cap K_J)$. By construction, at each step
all remaining intervals in the construction of $K_J$ have
the same length. Thus, the n-th stage contains $2^n$
intervals $I_{\underline{a}}$ (among them) all with the same
Lebesgue measure. Since $\Leb_{K_J}$ is a probability
measure, we have
$$ 2^n \Leb_{K_J}( I_{\underline{a}}\cap K_J)=1\Longrightarrow \Leb_{K_J}( I_{\underline{a}}\cap K_J)=\frac{1}{2^n}.$$

We conclude that
$\Leb_{K_J}( I_{\underline{a}}\cap
K_J)=\Leb_{K_J}(h^{-1}([\underline{a}]))=\mu([\underline{a}])$
for all $\underline{a}=a_1a_2\ldots a_n$, $n\geq1$ proving
the Claim.

Clearly $h\circ f|_{K_J}=\sigma\circ h$ where $\sigma$ is
the standard left shift
$\sigma:\Sigma^+_2\circlearrowleft$. Since $\mu$ is
$\sigma$-invariant, then $\Leb_{K_J}$ is
$f|_{K_J}$-invariant. Consequently $(f, \Leb_{K_J})$ is
mixing,
$h_{\textrm{top}}(f)=\log2=h_{\Leb_{K_J}}(f)=h_{\mu}(\sigma)$
and $\Leb_{K_J}\ll \Leb$.

Hence
$\lim\limits_{n\rightarrow+\infty}\sigma_n(x)=\Leb_{K_J}$
for $\Leb_{K_J}-a.e$, $x\in K_J$ and $|K_J|>0$, so it
follows that $\Leb_{K_J}$ is a SRB-like measure. By Theorem
\ref{mthm:Toda-medida-nu-SRB-like-e-estado-de-equilibrio},
$\Leb_{K_J}$ is an equilibrium state for the potential
$\psi=-\log |f'|$.

Similarly $ \Leb_{K_{J_1}}$ is also a SRB-like measure, but distinct form $\Leb_{K_J}$.
\end{example}

We observe that this construction allows us to obtain
countably many ergodic SRB-like measures by reapplying the
construction to each removed subinterval of the first Cantor
set.

We also note that, if we take a sequence of H\"older
continuous potentials $\phi_n$ converging to
$\psi=-\log|f'|$; choose $\nu_n$ a $\phi_n$-conformal
measure and a $\nu_n$-SRB measure $\mu_n$ and weak$^*$
accumulation points $\nu$, $\mu$ as in Theorem
\ref{mthm:theorem-conv-das-medidas-tipo-SRB}, since $f$ is
topologically exact, then $\nu(A_{\varepsilon}(\mu))=1$ for
all $\epsilon>0$.  Therefore $\nu$ is not Lebesgue measure
on $\mathbb{S}^1$.

\subsection{$C^1$ Non uniformly expanding maps}
\label{sec:c1-non-uniformly}

Next we present an example that is a robust ($C^1$ open)
class of non-uniformly expanding $C^1$ local
diffeomorphisms.

This family of maps was introduced in \cite{ABV00} (see
also subsection 2.1 in \cite[Chapter 1]{Ze03}) and maps in
this class exhibit non-uniform expansion Lebesgue almost
everywhere but are not uniformly expanding.

\begin{example}\label{ex:C1NUE}
  Let $M$ be a compact manifold of dimensiom $d\geq 1$ and
  $f_0:M \circlearrowleft$ is a $C^1-$expanding map. Let
  $V\subset M$ be some small compact domain, so that the
  restriction of $f_0$ to $V$ is injective. Let $f$ be any
  map in a sufficiently small $C^1$-neighborhood $\SN$ of
  $f_0$ so that:
\begin{enumerate}
\item $f$ is volume expanding everywhere: there exists
  $\sigma_1 > 1$ such that
  $$|\det Df(x)|\geq\sigma_1 \ \ \textrm{for every} \ x\in M$$
\item $f$ is expanding outside $V$: there exists
  $\sigma_0 < 1$ such that
$$\|Df(x)^{-1}\|< \sigma_0 \ \ \textrm{for every} \  x\in M\backslash V;$$
  \item $f$ is not too contracting on $V$ : there is some small $\delta > 0$ such that
$$\|Df(x)^{-1}\|< 1 +\delta \ \ \textrm{for every}  \ x\in V.$$
\end{enumerate}

Then every map $f$ in such a $C^1$-neighborhood $\SN$ of
$f_0$ is non-uniformly expanding (see a proof in subsection
2.1 in \cite{Ze03}). By Theorem \ref{mthm:formula de pesin
  para difeo-local com HT}, every expanding weak-SRB-like
probability measure satisfies Pesin's Entropy Formula, and
every ergodic SRB-like measure is expanding.

Such classes of maps can be obtained through deformation of
a uniformly expanding map by isotopy inside some small
region.
\end{example}

\subsection{Weak-expanding and non-uniformly expanding maps}
\label{sec:weak-expanding-non}

Consider $\alpha > 0$ and the map
$T_{\alpha}:[0,1]\rightarrow [0,1]$ defined as follows
\begin{align*}
T_{\alpha}(x)= \left\{ \begin{array}{ll}
x+2^{\alpha}x^{1+\alpha} & \textrm{if $x\in [0,1/2)$}\\
x-2^{\alpha}(1-x)^{1+\alpha} & \textrm{if $x\in [1/2,1]$}
\end{array} \right..
\end{align*}
This map defines a $C^{1+\alpha}$ map of the unit circle
$\sS^1:= [0, 1]/\sim$ into itself, called
\emph{intermittent maps}. These applications are expanding,
except at a neutral fixed point, the unique fixed point is
$0$ and $DT_{\alpha}(0)=1$.  The local behavior near this
neutral point 
provides many interesting results in ergodic theory. If
$\alpha\geq 1$, i.e. if the order of tangency at zero is
high enough, then the Dirac mass at zero $\delta_0$ is the
unique physical probability measure and so the Lyapunov
exponent of Lebesgue almost all points vanishes (see
\cite{thaler1983}). This example shows that there exists
systems which are weak-expanding but not non-uniformly
expanding. Hence the assumption of weak expansion together
with non-uniform expansion in Theorem \ref{mthm:WE} is not
superfluous.

The following is an example of a map which is weak-expanding
and non-uniformly expanding in the setting of Theorem
\ref{mthm:WE}.

\begin{example}\label{ex:C1wExpNUE}
  If, in the setting of the construction of the previous
  Example~\ref{ex:C1NUE}, the region $V$ of a point $p$ of a
  periodic orbit with period $k$, and the deformation
  weakens one of the eigenvalues of $(Df_0^k)_p$ in such a way
  that $1$ becomes and eigenvalue of $Df_p^k$, then $f$ is
  an example of a weak expanding and non uniformly expanding
  $C^1$ transformation.

  More precisely, consider the function
  $g_0(t)=\frac{t}{\log(1/t)}, 0<t<1/2$, $g_0(0)=0$. It is
  easy to see that
  \begin{itemize}
  \item $g_0'(t)>0, 0<t<1/2$ and so $g_0$ is stricly increasing;
  \item $g_0'$ is not of $\alpha$-generalized bounded
    variation for any $\alpha\in(0,1)$ (see \cite{Ke85} for
    the definition of generalized bounded variation) and so
    $g_0'$ is not $C^\alpha$ for any $0<\alpha<1$.
  \end{itemize}
  Setting $g(t)=g_0(t)/g_0(1/2)$ we obtain
  $g:[0,1/2]\to[0,1]$ a $C^1$ strictly increasing function
  which is not $C^{1+\alpha}$ for any $\alpha\in(0,1)$.  Now
  we consider the analogous map to $T_\alpha$
  \begin{align*}
  T(x)= \left\{ \begin{array}{ll}
                  x+x g(x) & \textrm{if $x\in [0,1/2)$}\\
                  x-(1-x)g(1-x) & \textrm{if $x\in [1/2,1]$}
\end{array} \right..
  \end{align*}
  This is now a $C^1$ map of the circle into itself which is
  not $C^{1+\alpha}$ for any $0<\alpha<1$. Moreover, letting
  $f_0=T\times E$ where $E(x)=2x\mod1$, we see that $f_0$
  satisfies items (1-3) in Example~\ref{ex:C1NUE} for $V$ a
  small neighborhood of the fixed point $(0,0)$.

  Hence $f_0$ is a $C^1$ non-uniformly expanding
  map. Moreover, since $T'(0)=1$, we have that $f_0$ is also
  a $C^1$ weak expanding map with $\SD=\{(0,0)\}$.
\end{example}

Now we extend this construction to obtain a weak expanding
and non uniformly expanding $C^1$ map so that $\SD$ is non
denumerable.

\begin{example}\label{ex:C1wExpNUE-nondenum}
  Let $K \subset I = [0, 1]$ be the middle third Cantor set
  and let $\beta_n(x)=d(x,K\cap[0,3^{-n}])$, where $d$ is
  the Euclidean distance on $I$. Note that $\beta_n$ is
  Lipschitz and $g_0'\circ\beta_n$ is bounded and continuous
  but not of $\alpha$-generalized bounded variation for any
  $0<\alpha<1$, where $g_0$ was defined in
  Example~\ref{ex:C1wExpNUE}. Indeed, since
  $(3^{-k},2\cdot3^{-k})$ is a gap of $K\cap[0,3^{-n}]$ for
  all $k>n$, then
  \begin{align*}
    \frac{g_0'(\beta(3^{1-k}/2)) -
    g_0'(\beta(3^{-k}))}{3^{1-k}/2 -3^{-k}}
    &=
      2\cdot3^k g_0'\left(\frac1{2\cdot 3^{k-1}}\right)
    \\
    &=
      \frac{2\cdot3^k}{(1-k)\log3-\log2}\left(\frac1{(1-k)\log3-\log2}-1\right)
  \end{align*}
  is not bounded when $k\nearrow\infty$.  If
  $h: I\rightarrow \RR$ is given by
  $h(x) = x +\big(\int_0^xg_0'\circ\beta_n
  \big)\big(\int_0^1g_0'\circ\beta_n\big)^{-1}$, where the
  integrals are with respect to Lebesgue measure on the real
  line, then $h(0) = 0$, $h(1)=2$ and $h$ induces a $C^1$
  map $h_0:\sS^1\to\sS^1$ whose derivative is continuous but
  not $C^\alpha$ for any $0<\alpha<1$, such that $h_0'(x)=1$
  for each $x\in K\cap[0,3^{-n}]$ and $h_0'(x)>1$ otherwise.

  We set now
  $h_t(x)=x +\big(\int_0^x(t+g_0'\circ\beta_n)\big)
  \big(\int_0^1(t+g_0'\circ\beta_n)\big)^{-1}$ for
  $t\in[0,1]$, which induces a $C^1$ map of $\sS^1$ into
  itself with continuous derivative and $h_t^\prime>1$ for
  $t>0$. We then define the skew-product map
  $f_0(x,y)=(E(x),h_{\sin \pi x}(y))$ for
  $(x,y)\in\sS^1\times\sS^1$ which is a weak expanding map
  with $\SD=\{0\}\times( K\cap[0,3^{-n}])$.

  For sufficiently big $n$ it is easy to verify that $f_0$
  satisfies items (1-3) in Example~\ref{ex:C1NUE} for $V$ a
  small neighborhood of the fixed point $(0,0)$. Hence $f_0$
  is also a $C^1$ non-uniformly expanding map which is not a
  $C^{1+\alpha}$ map for any $0<\alpha<1$.
\end{example}


\section{Preliminary definitions and results}
\label{sec:Preliminary definitions and results}

In this section, we revisit the definition and some results
on the theory $\nu$-SRB-like measures and also some
properties of measure-theoretic entropy.

\subsection{Invariant and $\nu$-SRB-like measures}
\label{sec:invariant measure and SRB-like measure}

Let $X$ be a compact metric space and $T:X\rightarrow X$ be
continuous. Denote $\mathcal{M}$ the set of all Borel
probability measures on $X$ and $\mathcal{M}_{T}$ the set of
all $T-$invariant Borel probability measures on $X$. In
$\SM$ fix a metric compatible with the weak$^*$ topology on
$\mathcal{M}$, e.g.
\begin{eqnarray}\label{eq13}
\dist(\mu,\nu):=\sum\limits_{i=0}^{+\infty}\frac{1}{2^i}\left|\int\phi_id\mu-\int\phi_id\nu\right|,
\end{eqnarray}
where $\{\phi_i\}_{i\geq0}$ is a countable family of
continuous functions that is dense in the space
$C^0(X,[0,1])$.  The following will be used in the proofs of
the large deviations lemma and Theorem
\ref{teorema:as-medidas-SRB-like-tem-pressao-nao-negativa}
that are essential the proofs of Theorems
\ref{mthm:Toda-medida-nu-SRB-like-e-estado-de-equilibrio},
\ref{mthm:WE} and \ref{mthm:formula de pesin para
  difeo-local com HT}.

\begin{lemma}
\label{lm1}
For all $\varepsilon > 0$ there is $\delta>0$ such that if
$d(T^i(x),T^i(y))<\delta$ for all $i=0,\ldots, n-1$ then
$\dist(\sigma_n(x),\sigma_n(y))<\varepsilon$
\end{lemma}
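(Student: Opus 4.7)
The plan is to exploit that the metric $\dist$ is defined as a weighted sum (with geometric weights) of differences of integrals against a countable dense family $\{\phi_i\}_{i\geq0}\subset C^0(X,[0,1])$, and to split that sum into a finite ``head'' controlled by uniform continuity and a geometric ``tail'' that is small automatically.

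First I would fix $\varepsilon>0$ and pick $N=N(\varepsilon)\in\NN$ large enough so that
\[
\sum_{i>N}\frac{1}{2^i}\cdot 2 < \frac{\varepsilon}{2},
\]
which is possible since $\|\phi_i\|_\infty\leq 1$ and the geometric series converges. For the head, since $X$ is compact, each of the finitely many functions $\phi_0,\dots,\phi_N$ is uniformly continuous, so I can choose $\delta>0$ such that whenever $d(u,v)<\delta$, one has $|\phi_i(u)-\phi_i(v)|<\varepsilon/4$ for all $i=0,\dots,N$ simultaneously.

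Next, assuming $d(T^j x,T^j y)<\delta$ for every $j=0,\dots,n-1$, I would use the identity
\[
\int\phi_i\,d\sigma_n(x)-\int\phi_i\,d\sigma_n(y)=\frac{1}{n}\sum_{j=0}^{n-1}\bigl(\phi_i(T^j x)-\phi_i(T^j y)\bigr),
\]
so that for $i\leq N$ the uniform continuity bound gives $|\int\phi_i\,d\sigma_n(x)-\int\phi_i\,d\sigma_n(y)|<\varepsilon/4$, while for $i>N$ the trivial bound $|\int\phi_i\,d\sigma_n(x)-\int\phi_i\,d\sigma_n(y)|\leq 2$ is enough. Plugging both estimates into the definition of $\dist$ yields
\[
\dist(\sigma_n(x),\sigma_n(y))\leq \frac{\varepsilon}{4}\sum_{i=0}^{N}\frac{1}{2^i}+\sum_{i>N}\frac{2}{2^i}<\frac{\varepsilon}{2}+\frac{\varepsilon}{2}=\varepsilon,
\]
which is the desired conclusion.

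There is essentially no hard step here: the only points of care are (i) that $\delta$ depends on $\varepsilon$ but \emph{not} on $n$ (which is automatic because the estimate above is uniform in $n$ after averaging), and (ii) that the choice of $N$ must be made \emph{before} the choice of $\delta$, so that uniform continuity is applied only to a finite family of potentials. Everything else is routine.
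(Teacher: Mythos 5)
Your proof is correct and is exactly the argument the paper omits (the paper simply notes that the lemma is a straightforward consequence of compactness and metrizability): split the geometric sum defining $\dist$ in \eqref{eq13} into a finite head handled by uniform continuity of $\phi_0,\dots,\phi_N$ and a tail made small by the geometric weights, with the averaging over the orbit making the $\delta$ independent of $n$.
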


\begin{proof}
  This is a simple consequence of compactness and
  metrizability of $\mathcal{M}$ with the weak$^*$ topology.
\end{proof}

Next results ensure the existence of $\nu$-SRB-like measure
and consequently, by Remark
\ref{rmk:toda-medida-SRB-like-eh-weak-SRB-like}, that
$\nu$-weak-SRB-like measures do always exist.

Given a probability measure $\nu$ on $M$ let
$\SW_T(\nu)\subset\mathcal{M}_T$ be the family of
weak$^*$ accumulation measures $p\omega(x)$ for
$\nu\text{-a.e.} x$. This is a well-defined compact subset
of $\SM_T$ in the weak$^*$ topology as follows.

\begin{proposition}\label{proposition:Wf-compacto}
  Let $T:X\rightarrow X$ be a continuous map of a compact
  metric space $X$. For each reference probability measure
  $\nu$ there exist
  $\mathcal{W}_{T}(\nu)\subset\mathcal{M}_T$ the unique
  minimal non-empty and weak$^*$ compact set, such that
  $p\omega(x)\subset \mathcal{W}_{T}(\nu)$ for
  $\nu$-a.e. $x\in X$. When $\nu=\Leb$ we denote
  $\mathcal{W}_{T}(\Leb)=\mathcal{W}_{T}$.
\end{proposition}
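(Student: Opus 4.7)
The main difficulty is that the collection of weak$^{*}$-closed $K \subset \mathcal{M}_T$ satisfying $p\omega(x) \subset K$ for $\nu$-a.e.\ $x$ is a priori uncountable, so naively intersecting all such $K$ threatens to require controlling an uncountable union of $\nu$-null exceptional sets, which need not itself be null. The plan is to exploit the fact that $\mathcal{M}_T$ is a weak$^{*}$-compact metrizable space (with the metric $\dist$ of \reff{eq13}) and therefore second countable, so that the whole construction can be reduced to countable data encoded by a basis of open sets.

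I will fix a countable basis $\mathcal{B}$ of weak$^{*}$-open sets for $\mathcal{M}_T$ and, for each $U \in \mathcal{B}$, consider
$$A_U := \{x \in X : p\omega(x) \cap U \neq \emptyset\}.$$
The first technical step is to verify that $A_U$ is Borel. For this I would write $U = \bigcup_{k \geq 1} C_k$ as a countable union of compact sets (valid since open subsets of a compact metric space are $F_\sigma$) and observe that $p\omega(x) \cap C_k \neq \emptyset$ if and only if $\liminf_{n \to \infty} \dist(\sigma_n(x), C_k) = 0$, using the compactness of $\mathcal{M}$ to extract a weak$^{*}$-convergent subsequence together with the closedness of $C_k$. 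Since $x \mapsto \sigma_n(x)$ is continuous from $X$ into $\mathcal{M}$, this exhibits $A_U$ as a countable union of Borel sets.

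Next, define the countable family $\mathcal{B}^{*} := \{U \in \mathcal{B} : \nu(A_U) = 0\}$ and set
$$\mathcal{W}_T(\nu) := \mathcal{M}_T \setminus \bigcup_{U \in \mathcal{B}^{*}} U,$$
which is weak$^{*}$-compact by construction. Letting $N := \bigcup_{U \in \mathcal{B}^{*}} A_U$, countable subadditivity gives $\nu(N) = 0$; and for every $x \in X \setminus N$ and every $U \in \mathcal{B}^{*}$ one has $p\omega(x) \cap U = \emptyset$, so $p\omega(x) \subset \mathcal{W}_T(\nu)$. Non-emptiness of $\mathcal{W}_T(\nu)$ follows because $p\omega(x) \neq \emptyset$ for every $x \in X$, by the standard Krylov--Bogolyubov argument applied to the empirical measures $\sigma_n(x)$.

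For minimality, suppose $K \subset \mathcal{M}_T$ is weak$^{*}$-closed and $p\omega(x) \subset K$ for $\nu$-a.e.\ $x$; argue by contradiction, picking $\mu \in \mathcal{W}_T(\nu) \setminus K$. Since $\mathcal{B}$ is a basis for the topology and $\mathcal{M}_T \setminus K$ is open, there exists $U \in \mathcal{B}$ with $\mu \in U \subset \mathcal{M}_T \setminus K$. Then $p\omega(x) \cap U = \emptyset$ on a set of full $\nu$-measure, so $\nu(A_U) = 0$, i.e.\ $U \in \mathcal{B}^{*}$, forcing $\mu \in \bigcup \mathcal{B}^{*}$ and contradicting $\mu \in \mathcal{W}_T(\nu)$. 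Uniqueness of the minimal such set is then automatic. The only genuinely subtle step is the Borel measurability of $A_U$; once that is in place, the rest is bookkeeping with countable covers and elementary weak$^{*}$ topology.
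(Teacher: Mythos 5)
Your proof is correct and self-contained. The paper itself gives no in-text argument, simply citing \cite[Theorem 1.5]{CatsEnrich2011} with $\Leb$ replaced by $\nu$; that reference organizes the construction around the Hausdorff metric on the space of weak$^*$-compact subsets of $\mathcal{M}_T$ (itself a compact, hence separable, metric space) and extracts $\mathcal{W}_T(\nu)$ as a countable intersection of admissible compacta, whereas you work directly with a fixed countable open basis $\mathcal{B}$ of $\mathcal{M}_T$. Both routes rest on the same underlying second countability of the weak$^*$ topology, and yours is arguably the more transparent packaging. The one point that genuinely needed care, Borel measurability of $A_U$, is handled correctly: the $F_\sigma$-decomposition $U=\bigcup_k C_k$ together with the equivalence $p\omega(x)\cap C_k\neq\emptyset \iff \liminf_n\dist(\sigma_n(x),C_k)=0$ (which uses continuity of $x\mapsto\sigma_n(x)$ for one direction and compactness of $C_k$ to recover a limit point inside $C_k$ for the other) exhibits $A_U$ as a countable union of Borel sets. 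The remaining steps — countable subadditivity for the exceptional null set $N$, $p\omega(x)\neq\emptyset$ via Krylov--Bogolyubov for non-emptiness, and the basis-refinement argument showing $\mathcal{W}_T(\nu)\subset K$ for every admissible compact $K$ (which gives both minimality and uniqueness) — all go through exactly as you wrote them.
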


\begin{proof}
  See proof of \cite[Theorem 1.5]{CatsEnrich2011}, replacing
  $\Leb$ by $\nu$.
\end{proof}

The subset $\SW_T(\nu)$ contains all the $\nu-$SRB-like
measures, as follows.

\begin{proposition}
  Given $\nu$ a reference probability measure, a probability
  measure $\mu\in\mathcal{M}$ is $\nu$-SRB-like if and only
  if $\mu\in \mathcal{W}_{T}(\nu)$.
\end{proposition}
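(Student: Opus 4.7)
The plan is to argue both implications by contradiction, using the defining minimality of $\mathcal{W}_T(\nu)$ from Proposition~\ref{proposition:Wf-compacto} and the fact that every $p\omega(x)$ is a non-empty weak$^*$-compact subset of $\mathcal{M}_T$ (non-emptiness because the empirical sequence lives in the weak$^*$-compact $\mathcal{M}$).

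For the direction ($\Rightarrow$), suppose $\mu \notin \mathcal{W}_T(\nu)$. Since $\mathcal{W}_T(\nu)$ is weak$^*$-closed in $\mathcal{M}_T$, there exists $\varepsilon_0 > 0$ such that the open ball $B_{\varepsilon_0}(\mu)$ (in the metric $\dist$) is disjoint from $\mathcal{W}_T(\nu)$. By Proposition~\ref{proposition:Wf-compacto}, for $\nu$-a.e.\ $x$ we have $p\omega(x) \subset \mathcal{W}_T(\nu)$, so $p\omega(x) \cap B_{\varepsilon_0}(\mu) = \emptyset$ and therefore $\dist(p\omega(x), \mu) \geq \varepsilon_0$. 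This says $x \notin A_{\varepsilon_0}(\mu)$ for $\nu$-a.e.\ $x$, hence $\nu(A_{\varepsilon_0}(\mu)) = 0$, contradicting the assumption that $\mu$ is $\nu$-SRB-like.

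For the direction ($\Leftarrow$), assume $\mu \in \mathcal{W}_T(\nu)$ but for contradiction that $\nu(A_{\varepsilon}(\mu)) = 0$ for some $\varepsilon > 0$. Then for $\nu$-a.e.\ $x$, $\dist(p\omega(x), \mu) \geq \varepsilon$, i.e.\ $p\omega(x) \cap B_{\varepsilon}(\mu) = \emptyset$. Combining with $p\omega(x) \subset \mathcal{W}_T(\nu)$ for $\nu$-a.e.\ $x$, we obtain
\[
p\omega(x) \;\subset\; \mathcal{W}_T(\nu) \setminus B_{\varepsilon}(\mu) \qquad \text{for } \nu\text{-a.e.\ } x.
\]
The set on the right is a weak$^*$-closed (hence weak$^*$-compact) subset of $\mathcal{W}_T(\nu)$; it is non-empty because $p\omega(x) \neq \emptyset$ for every $x$; and it is a proper subset of $\mathcal{W}_T(\nu)$ because $\mu \in \mathcal{W}_T(\nu) \cap B_\varepsilon(\mu)$. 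This contradicts the minimality asserted in Proposition~\ref{proposition:Wf-compacto}.

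The only subtle point is verifying that the candidate set $\mathcal{W}_T(\nu) \setminus B_\varepsilon(\mu)$ in the second direction genuinely witnesses the failure of minimality, which requires simultaneously checking non-emptiness (immediate from $p\omega(x) \neq \emptyset$), weak$^*$-compactness (closed inside a compact), properness (automatic from $\mu \in \mathcal{W}_T(\nu)$), and the $\nu$-a.e.\ containment property for $p\omega$ (which is the content of the hypothesis $\nu(A_\varepsilon(\mu)) = 0$ intersected with the full-measure set where $p\omega(x)\subset\mathcal{W}_T(\nu)$). Everything else is a direct translation between the two characterizations, so no further machinery is required.
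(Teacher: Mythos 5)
Your proof is correct. Both implications are soundly deduced from Proposition \ref{proposition:Wf-compacto}: the forward direction by separating $\mu$ from the compact set $\mathcal{W}_T(\nu)$ to exhibit a fixed $\varepsilon_0$ with $\nu(A_{\varepsilon_0}(\mu))=0$, and the converse by showing that $\mathcal{W}_T(\nu)\setminus B_\varepsilon(\mu)$ would otherwise be a strictly smaller non-empty weak$^*$-compact set still containing $p\omega(x)$ for $\nu$-a.e.\ $x$, violating minimality. You correctly handle the two points that could otherwise be overlooked: $p\omega(x)\ne\emptyset$ (so the candidate set is non-empty) and $\mu\in\mathcal{W}_T(\nu)\cap B_\varepsilon(\mu)$ (so it is a proper subset). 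The paper itself only cites \cite[Proposition 2.2]{CatsEnrich2012} without reproducing the argument; your proof is the expected self-contained version of that argument and follows the same minimality-based route, so it is essentially the same approach.
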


\begin{proof}
See proof of \cite[Proposition 2.2]{CatsEnrich2012}.
\end{proof}

%

For more details on SRB-like measures, see
\cite{Cats14,CatsEnrich2011,CatsEnrich2012,catsigeras2016weak}.


\subsection{Measure-theoretic entropy}

For any Borel measurable finite partition $\SP$ of $M$, and
for any (not necessarily invariant) probability $\nu$, the
\emph{entropy of $\nu$ with respect to $\SP$} is defined as
$H(\SP,\nu)=-\sum_{P\in\SP}\nu(P)\log(\nu(P)).$ Given
another finite partition $\tilde{\SP}$ of $M$ the
\emph{conditional entropy of $\SP$ with respect to
  $\tilde{\SP}$} is given by
  \begin{align*}
    H_\nu(\SP / \tilde{\SP})
    =
    \sum_{P\in\SP}\sum_{Q\in\tilde{\SP}}
    -\nu(P\cap Q)\log\frac{\nu(P\cap Q)}{\nu(Q)}.
  \end{align*}


\begin{lemma}\label{lemma9.1.6}
  Given $S\geq1$ and $\varepsilon>0$ there exists $\delta>0$
  such that, for any finite partitions
  $\SP=\{P_1,\ldots, P_S\}$ and
  $\tilde{\SP}=\{\tilde{P}_1,\ldots,\tilde{P}_S\}$ such that
  $\nu(P_i\triangle\tilde{P}_i)<\delta$ for all
  $i=1,\ldots,S$, then
  $H_{\nu}(\tilde{\SP}/\SP)<\varepsilon$.
\end{lemma}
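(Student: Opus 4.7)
My plan is to expand the conditional entropy in its standard form,
\[
H_\nu(\tilde{\mathcal{P}}/\mathcal{P}) \;=\; \sum_{i,j} \nu(P_i\cap\tilde P_j)\,\log\frac{\nu(P_i)}{\nu(P_i\cap \tilde P_j)},
\]
with the usual convention $0\log 0=0$, and to control it by exploiting the elementary inclusion
\[
P_i\cap\tilde P_j \;\subset\; P_i\setminus\tilde P_i \;\subset\; P_i\triangle \tilde P_i \qquad (j\neq i),
\]
which together with the hypothesis $\nu(P_i\triangle\tilde P_i)<\delta$ gives $\nu(P_i\cap \tilde P_j)<\delta$ for every $j\neq i$, and also $\sum_{j\neq i}\nu(P_i\cap\tilde P_j) = \nu(P_i\setminus \tilde P_i)<\delta$, hence $\nu(P_i\cap\tilde P_i)>\nu(P_i)-\delta$. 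The plan is therefore to show that each row of the matrix $q_i(j):=\nu(P_i\cap\tilde P_j)/\nu(P_i)$ (defined when $\nu(P_i)>0$) is a probability vector concentrated on $j=i$, and then rewrite the conditional entropy as $\sum_i \nu(P_i)\,H(q_i)$.

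The main estimate is a uniform bound on $H(q_i)$. For indices with $\nu(P_i)\geq \sqrt{\delta}$, setting $\eta_i:=1-q_i(i)\leq \delta/\nu(P_i)\leq \sqrt{\delta}$, I will maximize the residual entropy $-\sum_{j\neq i} q_i(j)\log q_i(j)$ subject to $\sum_{j\neq i}q_i(j)=\eta_i$, which is attained by the uniform distribution on the $S-1$ off-diagonal coordinates and equals $\eta_i\log((S-1)/\eta_i)$. This produces
\[
H(q_i)\;\leq\; h(\eta_i)+\eta_i\log(S-1) \;\leq\; h(\sqrt{\delta})+\sqrt{\delta}\log S,
\]
where $h(t)=-t\log t-(1-t)\log(1-t)$. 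Summing against the weights $\nu(P_i)\leq 1$ contributes at most $h(\sqrt{\delta})+\sqrt{\delta}\log S$.

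For indices with $\nu(P_i)<\sqrt{\delta}$ I use the trivial bound $H(q_i)\leq \log S$; since there are at most $S$ such indices, their total contribution is at most $S\sqrt{\delta}\log S$. Combining,
\[
H_\nu(\tilde{\mathcal{P}}/\mathcal{P}) \;\leq\; h(\sqrt{\delta}) + (1+S)\sqrt{\delta}\log S,
\]
which tends to $0$ as $\delta\to 0^{+}$, so choosing $\delta$ small enough in terms of $S$ and $\varepsilon$ gives the claimed inequality.

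The only genuinely delicate point, which I expect to be the main obstacle, is the treatment of atoms $P_i$ whose measure $\nu(P_i)$ is comparable to or smaller than $\delta$. There one cannot apply a naive continuity-of-$t\log t$ argument, because $q_i$ is no longer concentrated and the term $\log(\nu(P_i)/\nu(P_i\cap\tilde P_j))$ may blow up. The threshold splitting at $\sqrt{\delta}$ (any scale that beats $\delta$ would do) neutralizes this: for small atoms the outer weight $\nu(P_i)$ absorbs the $\log S$ factor, while for large atoms the concentration of $q_i$ on the diagonal yields the quantitative entropy decay. This is why a purely $L^1$-style estimate on the symmetric differences is enough to force conditional entropy to be small, without needing any lower bound on the masses $\nu(P_i)$.
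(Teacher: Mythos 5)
Your proof is correct. The paper does not actually prove this lemma: it simply cites \cite[Lemma~9.1.6]{OV14}, so there is no in-text argument to compare against. Your argument is a valid, self-contained proof.

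A few brief remarks. Writing $H_\nu(\tilde{\SP}/\SP)=\sum_i\nu(P_i)H(q_i)$ with $q_i(j)=\nu(P_i\cap\tilde P_j)/\nu(P_i)$ is the standard reduction, and the observation that $\sum_{j\ne i}\nu(P_i\cap\tilde P_j)=\nu(P_i\setminus\tilde P_i)\le\nu(P_i\triangle\tilde P_i)<\delta$ is the right way to quantify the concentration of each row on its diagonal entry. The two estimates you use are both sharp where needed: $H(q_i)\le h(\eta_i)+\eta_i\log(S-1)$ (entropy of a distribution that puts mass $1-\eta_i$ on one symbol and mass $\eta_i$ spread over the remaining $S-1$), and the trivial $H(q_i)\le\log S$. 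The threshold at $\sqrt\delta$ is exactly what makes the argument work: it ensures $\eta_i\le\delta/\nu(P_i)\le\sqrt\delta$ on the large atoms (so $h(\eta_i)\le h(\sqrt\delta)$ since $h$ is increasing on $[0,1/2]$, which requires only $\delta<1/4$), while the small atoms contribute at most $S\sqrt\delta\log S$ in total. The resulting bound $h(\sqrt\delta)+(1+S)\sqrt\delta\log S\to 0$ as $\delta\to0^+$ finishes the proof. You also correctly flag the one genuine pitfall, namely atoms whose mass is comparable to $\delta$, and your splitting resolves it without assuming any lower bound on the $\nu(P_i)$. The only cosmetic point is the degenerate case $S=1$, where $\log(S-1)$ appears but multiplies $\eta_i=0$ and the claim is vacuous, so nothing is wrong.
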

\begin{proof}
  See \cite[Lemma 9.1.6]{OV14}.
\end{proof}

Let we denote $\SP^q=\bigvee\limits_{j=0}^{q-1}f^{-j}(\SP)$,
where
$\SP\vee\SQ=\{P\cap Q\neq\emptyset: P\in\SP, \ Q\in\SQ\}$
for any pair of finite partitions $\SP$ and $\SQ$. If
$\nu\in\SM_f$, then
$$h(\SP,\nu)=\lim_{q\to+\infty}\frac1q H(\SP^q,\nu)
=\inf_{q\ge1}\frac1q H(\SP^q,\nu)$$
is the metric entropy of the partition $\SP$.

Finally, the measure-theoretic entropy $h_{\nu}(f)$ of an
$f$-invariant measure $\nu$ is defined by
$h_{\nu}(f)=\sup_{\SP} h(\SP,\nu)$, where the supremum is
taken over all the Borel measurable finite partitions $\SP$
of $M$.

We define the diameter $\diam (\SP)$ of a finite partition
$\SP$ as the maximum diameter of its atoms.
The following results will be used in the proof of the large
deviation lemma that is essential for the proof of Theorem
\ref{mthm:Toda-medida-nu-SRB-like-e-estado-de-equilibrio}.

\begin{lemma}\label{lemma10.4.4}
  Let $a_1,\ldots,a_{\ell}$ real numbers and let
  $p_1,\ldots,p_{\ell}$ non-negative numbers such that
  $\sum_{k=1}^{\ell}p_k=1$. Denote
  $L=\sum_{k=1}^{\ell}e^{a_k}$. Then
  $\sum_{k=1}^{\ell}p_k(a_k-\log p_k)\leq\log L.$ Moreover,
  the equality holds if, and only if,
  $p_k=\frac{e^{a_k}}{L}$ for all $k$.
\end{lemma}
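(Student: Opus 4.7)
The plan is to reduce the statement to the classical Gibbs inequality (non-negativity of Kullback--Leibler divergence) by an explicit change of variables.

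First, I would introduce the auxiliary probability vector $q_k := e^{a_k}/L$, which is well-defined and satisfies $\sum_{k=1}^{\ell} q_k = 1$ since $L = \sum_k e^{a_k}$. The key observation is that $a_k = \log q_k + \log L$ for every $k$, so substituting this into the left-hand side gives
\begin{equation*}
\sum_{k=1}^{\ell} p_k (a_k - \log p_k) = \log L + \sum_{k=1}^{\ell} p_k \log\frac{q_k}{p_k},
\end{equation*}
with the convention $0 \log 0 = 0$ (and $p_k \log(q_k/p_k) = 0$ whenever $p_k = 0$). Thus the inequality in the statement is equivalent to $\sum_k p_k \log(q_k/p_k) \leq 0$, which is the standard fact that relative entropy is non-negative.

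To prove this reduced inequality, I would apply the elementary bound $\log x \leq x - 1$ (valid for $x > 0$, with equality iff $x = 1$) to $x = q_k/p_k$ at each index $k$ where $p_k > 0$:
\begin{equation*}
\sum_{k : p_k > 0} p_k \log\frac{q_k}{p_k} \leq \sum_{k : p_k > 0} p_k \left(\frac{q_k}{p_k} - 1\right) = \sum_{k : p_k > 0} q_k - \sum_{k : p_k > 0} p_k \leq 1 - 1 = 0,
\end{equation*}
since the $q_k$'s sum to $1$ and the $p_k$'s with $p_k > 0$ also sum to $1$. Alternatively, Jensen's inequality applied to the concave function $\log$ with weights $p_k$ gives the same conclusion in one line.

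For the equality case, tracing through the argument: equality in $\log x \leq x - 1$ forces $q_k/p_k = 1$ at every index $k$ with $p_k > 0$, and the inequality $\sum_{k:p_k>0} q_k \leq 1$ becomes equality only if $q_k = 0$ whenever $p_k = 0$; since $q_k > 0$ for all $k$, this forces $p_k > 0$ for every $k$, and then $p_k = q_k = e^{a_k}/L$ as required. No step poses a real obstacle; this is essentially bookkeeping around the Gibbs inequality, and the only mild subtlety is handling indices with $p_k = 0$ via the usual convention.
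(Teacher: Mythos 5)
Your proof is correct and complete. The paper itself does not reproduce a proof of this lemma but simply cites \cite[Lemma 10.4.4]{OV14}; the argument given there is essentially the same as yours, namely a reduction to the Gibbs inequality via the probability vector $q_k = e^{a_k}/L$ followed by the elementary bound $\log x \le x-1$, with the equality analysis traced through the same way.
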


\begin{proof}
See  \cite[Lemma 10.4.4]{OV14}.
\end{proof}

\begin{lemma}\label{lm2a}
  Let $f:M \rightarrow M$ be a measurable function. For any
  sequence of not necessarily invariant probabilites
  $\nu_n$, let
  $\mu_n:=\frac{1}{n}\sum_{j=0}^{n-1}(f^j)_*\nu_n$ and $\mu$
  be a weak$^*$ accumulation point of $(\mu_n)$. Let $\SP$
  be a finite partition of $M$ with
  $\mu(\partial\SP)=0=\mu_{n}(\partial\SP)$ for all
  $n\geq 1$. Then, for any $\varepsilon>0$, there a
  subsequence of integers $n_i\nearrow\infty$ such that
$$\frac{1}{n_i}H(\SP^{n_i},\nu_{n_i})\leq \frac{\varepsilon}{4}+h_{\mu}(f) \ \ \forall i\geq 1.$$
\end{lemma}

\begin{proof}
  Fix integers $q\geq1$, and $n\geq q$. Write $n=aq+j$ where
  $a, j$ are integer numbers such that $0\leq j\leq
  q-1$. Fix a (not necessarily invariant) probability
  $\nu$. From the properties of the entropy function $H$ of
  $\nu$ with respect to the partition $\SP$, we obtain
\begin{align*}
  H(\SP^{n},\nu)
  &=
    H(\SP^{aq + j},\nu)\leq H(\SP^{aq + q},\nu)
    \leq
    H(\bigvee\limits_{i=0}^{q-1}f^{-i}(\SP),\nu)+H(\bigvee\limits_{i=1}^{a}f^{-iq}(\SP^q),\nu)\\
  &\leq
    \sum\limits_{i=0}^{q-1}H(\SP,(f^{i})_*\nu)+\sum\limits_{i=1}^{a}H(\SP^q,(f^{iq})_*\nu)
    \leq
    q\log S
    +\sum\limits_{i=1}^{a}H(\SP^q,(f^{iq})_*\nu),
\end{align*}
for all $q\geq 1$ and $N\geq q$.  To obtain the last inequality
above recall that $H(\SP,\nu)\leq \log S$ for all
$\nu\in\SM$ where $S$ is the number of elements of the
partition $\SP$. The above inequality holds also for
$f^{-l}(\SP)$ instead of $\SP$, for any $l\geq0$, because it
holds for any partition with exactly $S$ atoms. Thus
\begin{eqnarray*}
  H(f^{-l}(\SP^{n}),\nu)
  &\leq&
         q\log S+\sum_{i=1}^{a}H(f^{-l}(\SP^q),(f^{iq})_*\nu)
         =q\log S+\sum_{i=1}^{a}H(\SP^q,(f^{l+iq})_*\nu).
\end{eqnarray*}
Adding the above inequalities for $0\leq l\leq q-1$, we
obtain on the one hand
\begin{align}\label{eq9.1a}
  \sum_{l=0}^{q-1}H(f^{-l}(\SP^{n}),\nu)
  &\leq
    q^2\log S+
    \sum_{l=0}^{q-1}\sum_{i=1}^{a}H(\SP^q,(f^{l+iq})_*\nu)
    =q^2\log S+\!\!\sum_{l=0}^{aq+q-1}H(\SP^q,(f^{l})_*\nu).
\end{align}
On the other hand, for all $0\leq l\leq q-1$,
\begin{align*}
  H(\SP^{n},\nu)
  &\leq
    H(\SP^{n+l},\nu)\leq
    H(f^{-l}(\SP^{n}),\nu)+\sum\limits_{i=0}^{l-1}H(f^{-i}(\SP),\nu)
    \le
    H(f^{-l}(\SP^q),\nu)+q\log S.
\end{align*}
Therefore, adding the above inequalities for
$0\leq l\leq q-1$ and joining with the inequality
(\ref{eq9.1a}), we obtain
$ qH(\SP^{n},\nu) \leq 2q^2\log
S+\sum_{l=0}^{aq+q-1}H(\SP^q,(f^l)_*\nu)$.

Recalling that $n= aq + j$ with $0\leq j \leq q-1$, we have
$aq+q\leq n+q$ and then
\begin{eqnarray*}
qH(\SP^{n},\nu)&\leq& 2q^2\log S+\sum\limits_{l=0}^{n-1}H(\SP^q,(f^l)_*\nu)+\sum\limits_{l=n}^{aq+q-1}H(\SP^q,(f^l)_*\nu)\\
&\leq& 3q^2\log S+\sum\limits_{l=0}^{n-1}H(\SP^q,(f^l)_*\nu),
\end{eqnarray*}
where we used that the number of non-empty pieces of $\SP^q$
is at most $S^q$. Now we fix a sequence $n_i\nearrow\infty$
such that
$\mu_{n_i}\to\mu$ in the weak$^*$ topology; put
$\nu=\nu_{n_i}$ and divide by $n_i$. Since $H$ is convex we
obtain
\begin{align*}
  \frac{q}{n_i}H(\SP^{n_i},\nu_{n_i})
  &\leq
    \frac{3q^2\log S}{n_i}+
    \frac{1}{n_i}\sum_{l=0}^{n_i-1}H(\SP^q,(f^l)_*\nu_{n_i})
    +\frac{aq^2\log S}{n_i}
  \\
  &\leq
    \frac{3q^2\log S}{n_i}+
    \frac{1}{n_i}\sum_{l=0}^{n_i-1}H(\SP^q,(f^l)_*\nu_{n_i})
    +\frac{aq^2\log S}{n_i}
    \leq
    \frac{3q^2\log
    S}{n_i}+H(\SP^q,\mu_{n_i}).
\end{align*}
Therefore
$\frac{1}{n_i}H(\SP^{n_i},\nu_{n_i}) \leq
\frac{\varepsilon}{12}+\frac{1}{q}H(\SP^q,\mu_{n_i})$ for
all $i \geq i_0(q)=\max\{q, 36q\epsilon^{-1}\log S\}$.
Since $\mu_{n_i}\to\mu$ and
$\mu_n(\partial \SP)=\mu(\partial \SP)=0$ for all $n\in\NN$,
then
$\lim_{i\rightarrow+\infty}H(\SP^q,\mu_{n_i})=
H(\SP^q,\mu)$. Thus there exists $i_1>i_0(q)$ such that
$\frac{1}{q}H(\SP^q,\mu_{n_i})\leq
\frac{1}{q}H(\SP^q,\mu)+\frac{\varepsilon}{12}$ for all
$i\geq i_1$.  Hence, we obtain
$\frac{1}{n_i}H(\SP^{n_i},\nu_{n_i})\leq
\frac{\varepsilon}{6}+\frac{1}{q}H(\SP^q,\mu)$ for all
$i\geq i_1$.

Moreover, since by definition
$\lim_{q\to+\infty}\frac1q H(\SP^q,\mu)=h(\SP,\mu)$, then
there exists $q_0\in\NN$ such that
$\frac{1}{q}H(\SP^q,\mu)\leq
h(\SP,\mu)+\frac{\varepsilon}{12}$ for all $q\geq q_0$. Thus
taking $i_2:=\max\{q_0, i_1\}$ we get
\begin{eqnarray}\label{desigualdade-das-entropias}
\frac{1}{n_i}H(\SP^{n_i},\nu_{n_i})&\leq& h(\SP,\mu)+\frac{\varepsilon}{4}\leq h_{\mu}(f)+\frac{\varepsilon}{4} \ \ \textrm{for all} \ i\geq i_2.
\end{eqnarray}
%
%
This completes the proof using the subsequence
$(n_i)_{i\geq i_2}$ as the sequence claimed in the statement
of the lemma.
\end{proof}


%
%

\section{Continuous variation of conformal measures and SRB-like measures}
\label{sec:Continuous variation of conformal measures and SRB-like measures}
Here we prove Theorem
\ref{mthm:theorem-conv-das-medidas-tipo-SRB} showing that
$\nu$-SRB-like measures can be seen as measures that
naturally arise as accumulation points of $\nu_n$-SRB
measures.

\begin{definition}\label{def-de-Jacobiano}
  A measurable function $J_{\nu}T: X\rightarrow [0,+\infty)$
  is the \emph{Jacobian of a map} $T: X\rightarrow X$\emph{
    with respect to a measure $\nu$} if for every Borel set
  $A\subset X$ on which $T$ is injective
$$\nu(T(A)) =\int_A J_{\nu}Td\nu.$$
\end{definition}

Next result guarantees the existence of measures with
prescribed Jacobian.

\begin{theorem}\label{existencia-de-medidas-conforme}
  Let $T:X\rightarrow X$ be a local homeomorphism of a
  compact metric space $X$ and let $\phi:X\rightarrow\RR$ be
  continuous. Then there exists a $\phi$-conformal
  probability measure $\nu=\nu_{\phi}$ and a constant
  $\lambda>0$, such that
  $\mathcal{L}^*_\phi \nu=\lambda \nu$. Moreover, the
  function $J_\nu T=\lambda e^{-\phi}$ is the Jacobian for
  $T$ with respect to the measure $\nu$.
\end{theorem}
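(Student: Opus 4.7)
The plan is to obtain $\nu$ as a fixed point of a suitably normalized dual operator via the Schauder--Tychonoff fixed point theorem, and then read off the Jacobian formula directly from the eigenmeasure identity $\mathcal{L}_\phi^*\nu=\lambda\nu$.

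First I would work in the set $\mathcal{M}$ of all Borel probability measures on $X$, which is non-empty, convex, and weak-$*$ compact (and metrizable) by compactness of $X$. Because $T$ is a local homeomorphism of a compact space, the map $x\mapsto\#T^{-1}(x)$ is locally constant and therefore bounded, so $\mathcal{L}_\phi 1$ is a continuous, strictly positive function on $X$. I would then define the normalized dual
$$
\Gamma:\mathcal{M}\to\mathcal{M},\qquad \Gamma(\eta)=\frac{\mathcal{L}_\phi^*\eta}{(\mathcal{L}_\phi^*\eta)(1)},
$$
where $(\mathcal{L}_\phi^*\eta)(1)=\int\mathcal{L}_\phi 1\,d\eta\geq\min_X\mathcal{L}_\phi 1>0$ ensures $\Gamma$ is well defined. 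Continuity of $\Gamma$ in the weak-$*$ topology follows from the continuity of $g\mapsto\mathcal{L}_\phi g$ on $C^0(X,\RR)$: if $\eta_n\to\eta$ weakly and $g\in C^0(X,\RR)$, then $\int g\,d(\mathcal{L}_\phi^*\eta_n)=\int\mathcal{L}_\phi g\,d\eta_n\to\int\mathcal{L}_\phi g\,d\eta$, and the scalar normalizer converges as well. Applying Schauder--Tychonoff to $\Gamma$ on the non-empty, convex, weak-$*$ compact set $\mathcal{M}$, I obtain a fixed point $\nu\in\mathcal{M}$; setting $\lambda:=(\mathcal{L}_\phi^*\nu)(1)>0$, this fixed point equation reads $\mathcal{L}_\phi^*\nu=\lambda\nu$, which is the first claim.

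For the Jacobian statement I would exploit the duality $\int\mathcal{L}_\phi g\,d\nu=\lambda\int g\,d\nu$ for all $g\in C^0(X,\RR)$. By a standard monotone-class argument this extends to bounded Borel functions. Let $A$ be a Borel set on which $T$ is injective. Applying the identity to $g=e^{-\phi}\mathbf{1}_A$ and using injectivity of $T$ on $A$, at each $x\in X$ at most one preimage of $x$ lies in $A$, so
$$
\mathcal{L}_\phi g(x)=\sum_{T(y)=x}e^{\phi(y)}\,e^{-\phi(y)}\mathbf{1}_A(y)=\mathbf{1}_{T(A)}(x).
$$
Integrating against $\nu$ gives $\nu(T(A))=\lambda\int_A e^{-\phi}\,d\nu$, i.e.\ $\nu(T(A))=\int_A J_\nu T\,d\nu$ with $J_\nu T=\lambda e^{-\phi}$, which is exactly the required Jacobian formula.

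The main technical points are likely to be two. First, one must ensure that $\mathcal{L}_\phi 1$ is uniformly bounded away from zero so that $\Gamma$ is continuous and takes values in $\mathcal{M}$; this is where the local homeomorphism assumption on the compact space $X$ is essential, since it guarantees that the number of preimages is locally constant and thus bounded, and that $e^{\phi}$ attains a positive minimum. Second, one must justify that $T(A)$ is Borel measurable for any Borel set $A$ on which $T$ is injective; this can be handled by first covering $X$ by countably many open sets on which $T$ is a homeomorphism, restricting $A$ to each such set, and then taking countable unions, after which the monotone-class extension of the duality from $C^0(X,\RR)$ to bounded Borel functions completes the argument.
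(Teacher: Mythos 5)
Your argument is correct and is precisely the standard Schauder--Tychonoff fixed-point construction behind the result the paper invokes from \cite[Theorem~4.2.5]{PrzyUrb10}: normalize $\mathcal{L}_\phi^*$ on the weak-$*$ compact convex set $\mathcal{M}$, extract a fixed point $\nu$ with $\mathcal{L}_\phi^*\nu=\lambda\nu$, and then read off the Jacobian formula by testing the (monotone-class extended) duality against $g=e^{-\phi}\mathbf{1}_A$, using that injectivity of $T|_A$ collapses the preimage sum to $\mathbf{1}_{T(A)}$. One small point deserves tightening: local constancy of $x\mapsto\#T^{-1}(x)$ does not by itself make $\mathcal{L}_\phi 1$ strictly positive, since the locally constant count could vanish on a component; strict positivity requires $T$ to be surjective (every fibre nonempty), which is what actually keeps $\Gamma$ well-defined on all of $\mathcal{M}$. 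Surjectivity is automatic when $X$ is connected (the image of a local homeomorphism of a compact space is open, closed, and nonempty) and is implicit in the paper's setting, but you should invoke it explicitly rather than deduce positivity of $\mathcal{L}_\phi 1$ from local constancy alone.
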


\begin{proof}
  See \cite[Theorem 4.2.5]{PrzyUrb10}.
\end{proof}

We say that a continuous mapping $T:X\rightarrow X$ is open,
if open sets have open images. This is equivalent to saying
that if $f(x) = y$ and
$y_n\xrightarrow[n\rightarrow+\infty] {}y$ then there exist
$x_n \xrightarrow[n\rightarrow+\infty]{} x$ such that
$f(x_n) = y_n$ for $n$ large enough.
\begin{definition}
  A continuous mapping $T:X\rightarrow X$ is 
\begin{enumerate}
\item [(a)] \emph{topologically exact} if for all non-empty
  open set $U\subset X$ there exists $N=N(U)$ such that
  $T^{N}U = X$.
\item [(b)] \emph{topologically transitive} if for all
  non-empty open sets $U, V \subset X$ there exists
  $n\geq 0$ such that $T^n(U)\cap V \neq\emptyset$.
\end{enumerate}
\end{definition}

Topological transitiveness ensures conformal measures give
positive mass to any open subset.

\begin{proposition}\label{prop:medida-positiva-em-abertos}
  Let $T:X\rightarrow X$ be an open distance expanding
  topologically transitive map and $\phi:X\rightarrow\RR$ be
  continuous. Then every conformal measure $\nu=\nu_{\phi}$
  is positive on non-empty open sets. Moreover for every
  $r > 0$ there exists $\alpha = \alpha(r) > 0$ such that
  for every $x\in X$, $\nu(B(x,r))\geq\alpha$.
\end{proposition}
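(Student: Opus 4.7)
The plan is to combine the iterated conformal identity $\int g\,d\nu = \lambda^{-n}\int \mathcal{L}_\phi^n g\,d\nu$ (which follows by induction from $\mathcal{L}_\phi^*\nu = \lambda\nu$) with a uniform topological exactness property that an open distance expanding topologically transitive map inherits from Przytycki--Urbanski, Chapter 4. Concretely, I want to convert $\nu(B(x,r))$ into an integral of a transfer sum and then bound that sum pointwise from below by saturating the forward iterates of $B(x,r)$ onto all of $X$.

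First I would fix $x\in X$ and $r>0$ and note that, although $\mathcal{L}_\phi^*\nu=\lambda\nu$ is a priori a statement about continuous test functions, it extends to lower semicontinuous indicators such as $\mathbf{1}_{B(x,r)}$ by a standard approximation from below (pick continuous $g_k\nearrow \mathbf{1}_{B(x,r)}$ supported in $B(x,r)$ and apply monotone convergence to both sides). Hence, for every $n\geq 1$,
$$
\nu(B(x,r)) \;=\; \lambda^{-n}\int_X \mathcal{L}_\phi^{n}\mathbf{1}_{B(x,r)}(z)\,d\nu(z)
\;=\; \lambda^{-n}\int_X\!\!\!\sum_{\substack{T^{n}y=z\\ y\in B(x,r)}}\!\!e^{S_n\phi(y)}\,d\nu(z).
$$

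Next I would produce an integer $N=N(r)\geq 1$, independent of $x$, such that $T^{N}(B(x,r))=X$ for every $x\in X$. The plan is two-stage: (i) use openness together with the distance expansion \eqref{condicao-de-expansao} iteratively to guarantee that $T^{k}(B(x,r))\supset B(T^{k}x,\eta)$ for some $k=k(r)$, independent of $x$ (each application multiplies the interior radius by $\lambda$ up to the cap $\eta$, and openness keeps the image open); (ii) use topological transitivity, openness, and compactness of $X$ to extract a uniform $N_{0}$ so that $T^{N_{0}}(B(y,\eta))=X$ for every $y\in X$ (a finite cover of $X$ by preimages of an $\eta$-ball under suitable iterates, made uniform by compactness). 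Setting $N(r)=k(r)+N_{0}$ does the job.

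Finally, once $T^{N(r)}(B(x,r))=X$ for all $x\in X$, for every $z\in X$ there is at least one $y\in B(x,r)$ with $T^{N(r)}y=z$, and each such term satisfies $e^{S_{N(r)}\phi(y)}\geq e^{-N(r)\|\phi\|_{\infty}}$. Inserting this bound into the integral above yields
$$
\nu(B(x,r)) \;\geq\; \lambda^{-N(r)}\,e^{-N(r)\|\phi\|_{\infty}} \;=:\;\alpha(r)\;>\;0,
$$
which is independent of $x$ and proves the quantitative statement. The first claim (positivity on nonempty open sets) is then immediate, since any nonempty open $U\subset X$ contains some $B(x,r)$ and therefore $\nu(U)\geq \alpha(r)>0$.

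The main obstacle I anticipate is step (ii) of the exactness argument: transitivity gives only pointwise statements of the form ``for each pair $(y,z)$ some iterate of $B(y,\eta)$ meets $B(z,\eta/2)$'', and turning this into a uniform $N_{0}$ covering all of $X$ from every $\eta$-ball requires the joint use of openness (so iterates remain open), the fixed expansion constant $\lambda$ (so diameters saturate at $\eta$), and a finite subcover argument on $X$. This is precisely the Przytycki--Urbanski route for open distance expanding transitive systems, which I would invoke rather than reprove in detail.
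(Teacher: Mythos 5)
The paper itself gives no argument here; it simply cites \cite[Proposition 4.2.7]{PrzyUrb10}, so your attempt is necessarily a reconstruction. The overall strategy — pass the conformal identity through $\mathcal{L}_\phi^n$ on indicators, then force enough preimages in $B(x,r)$ by spreading the ball over $X$ with finitely many iterates, and bound each weight by $e^{-n\|\phi\|_\infty}$ — is the right one, and your extension to $\mathbf{1}_{B(x,r)}$ via monotone approximation is fine.

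The gap is in step (ii), where you claim that openness, distance expansion, compactness and \emph{topological transitivity} yield a uniform $N_0$ with $T^{N_0}(B(y,\eta))=X$ for every $y$. That conclusion is \emph{uniform topological exactness}, and transitivity alone does not give it, even for open distance expanding maps. Concretely, take $X=\{0,1\}\times \sS^1$ with the metric that keeps the two circles at distance $\geq 1$ and is the arc metric on each circle, and $T(i,\theta)=(1-i,2\theta)$. This is open, distance expanding and topologically transitive, but for any $y$ and any $\eta<1/2$ the ball $B(y,\eta)$ lies in one circle, so $T^n(B(y,\eta))$ is always contained in a single circle and never equals $X$. Thus your argument, as written, produces a bound only under the stronger hypothesis of exactness, which the proposition does not assume. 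The repair is standard and keeps your framework: transitivity together with openness, compactness and the expansion of small balls (your step (i)) does give a uniform $N(r)$ with $\bigcup_{n=0}^{N(r)}T^n(B(x,r))=X$ for all $x$ (rather than a single iterate covering $X$). Then, using $\nu(T(A))\leq \int_A J_\nu T\,d\nu\leq \lambda e^{\|\phi\|_\infty}\nu(A)$ for any Borel $A$ (decompose $A$ into pieces on which $T$ is injective), one gets
\[
1=\nu(X)\leq\sum_{n=0}^{N(r)}\nu\big(T^n(B(x,r))\big)\leq\Big(\sum_{n=0}^{N(r)}\big(\lambda e^{\|\phi\|_\infty}\big)^n\Big)\,\nu(B(x,r)),
\]
which yields the desired uniform lower bound $\alpha(r)>0$. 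Your final paragraph already flags step (ii) as the delicate one, but the appeal to Przytycki--Urbanski does not rescue the specific exactness claim; what their setup actually provides is the union-of-iterates covering, and the estimate has to be routed through it as above.
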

\begin{proof}
  See \cite[Proposition 4.2.7]{PrzyUrb10}
\end{proof}


For H\"older potentials it is known that there exists a
unique $\nu$-SRB probability measure.

\begin{theorem}\label{teorema-existencia-de-medida-nu-SRB-que-sao-est-de-equilibrio-para-distance-expanding}
  Let $T:X\rightarrow X$ an open distance expanding
  topologically transitive map, $\nu=\nu_{\phi}$ a conformal
  measure associated the a H\"older continuous function
  $\phi:X\rightarrow \RR$. Then there exists a unique
  $\mu_{\phi}$ ergodic invariant $\nu$-SRB measure such that
  $\mu_{\phi}$ is an equilibrium state for $T$ and $\phi$.
\end{theorem}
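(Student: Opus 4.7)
The plan is to use the classical Ruelle--Perron--Frobenius machinery for distance expanding maps with H\"older potentials, which is available in the setting of Przytycki--Urba\'nski. First I would invoke the Ruelle--Perron--Frobenius theorem to produce a strictly positive H\"older continuous eigenfunction $h_\phi\in C^0(X,\RR)$ of the transfer operator, satisfying $\mathcal{L}_\phi h_\phi = \lambda h_\phi$ with $\lambda>0$ equal to the eigenvalue of the dual operator on the conformal measure $\nu=\nu_\phi$. Normalizing so that $\int h_\phi\,d\nu=1$, define
\[
\mu_\phi := h_\phi\,\nu.
\]
A direct computation using $\mathcal{L}_\phi^*\nu=\lambda\nu$ and $\mathcal{L}_\phi h_\phi=\lambda h_\phi$ shows that $\mu_\phi$ is $T$-invariant: for every $g\in C^0(X,\RR)$,
\[
\int g\circ T\,d\mu_\phi = \int (g\circ T)\, h_\phi\, d\nu = \frac{1}{\lambda}\int g\cdot \mathcal{L}_\phi(h_\phi)\, d\nu = \int g\, h_\phi\, d\nu = \int g\, d\mu_\phi.
\]

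Next I would establish the $\nu$-SRB property and ergodicity. Since $\phi$ is H\"older, the normalized transfer operator $\widetilde{\mathcal{L}}_\phi(g) = \lambda^{-1} h_\phi^{-1} \mathcal{L}_\phi(h_\phi g)$ has a spectral gap on a suitable H\"older space (this is the core of Chapter 5 of Przytycki--Urba\'nski in the topologically exact case, and the transitive case reduces to it by spectral decomposition). The spectral gap gives exponential decay of correlations for $\mu_\phi$-integrals of continuous observables against $\nu$-integrable densities; in particular $(T,\mu_\phi)$ is mixing and hence ergodic. Applying Birkhoff's ergodic theorem to $\mu_\phi$, and using that $\mu_\phi$ is equivalent to $\nu$ (because $h_\phi$ is bounded away from $0$ and $\infty$, by Proposition~\ref{prop:medida-positiva-em-abertos} together with positivity of $h_\phi$), one concludes that $\nu(B(\mu_\phi))=1$, so $\mu_\phi$ is $\nu$-SRB.

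To see that $\mu_\phi$ is an equilibrium state, I would use the Rokhlin entropy formula for the endomorphism $(T,\mu_\phi)$ together with the identification of the Jacobian. Since $J_\nu T = \lambda e^{-\phi}$ by Theorem~\ref{existencia-de-medidas-conforme}, the change of density $h_\phi$ yields
\[
\log J_{\mu_\phi} T = \log J_\nu T + \log h_\phi\circ T - \log h_\phi = \log\lambda - \phi + \log h_\phi\circ T - \log h_\phi.
\]
Integrating against the $T$-invariant measure $\mu_\phi$ collapses the coboundary term, giving $h_{\mu_\phi}(T) = \int \log J_{\mu_\phi}T\, d\mu_\phi = \log\lambda - \int\phi\, d\mu_\phi$, hence $h_{\mu_\phi}(T)+\int\phi\, d\mu_\phi = \log\lambda$. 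The identification $\log\lambda = P_{\mathrm{top}}(T,\phi)$ is the standard consequence of the variational principle combined with the construction of $\nu$ as a conformal measure (see \cite{PrzyUrb10}, Chapters 4--5), so $\mu_\phi$ achieves the supremum.

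Finally, for uniqueness among ergodic invariant $\nu$-SRB equilibrium states, I would argue that any $T$-invariant probability $\mu$ equivalent to (or even absolutely continuous with respect to) $\nu$ which attains the supremum must coincide with $\mu_\phi$: equality in the Rokhlin formula forces $J_\mu T = \lambda h_\phi^{-1}(h_\phi\circ T)e^{-\phi}$, determining the density $d\mu/d\nu$ up to normalization as $h_\phi$. The main obstacle in this program is the precise justification of the spectral gap (and hence ergodicity and decay of correlations) for $\widetilde{\mathcal{L}}_\phi$ under mere transitivity rather than topological exactness; I would handle this by citing the spectral decomposition in \cite{PrzyUrb10}, which reduces the transitive case to a cyclic collection of exact components and then averages to a single invariant ergodic equilibrium state.
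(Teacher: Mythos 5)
Your proposal reconstructs the standard Ruelle--Perron--Fr\"obenius argument (positive H\"older eigenfunction $h_\phi$, invariant Gibbs measure $\mu_\phi=h_\phi\nu$, mixing/ergodicity via spectral gap with a spectral decomposition for the merely transitive case, Rokhlin's formula to get $h_{\mu_\phi}(T)+\int\phi\,d\mu_\phi=\log\lambda=P_{\mathrm{top}}(T,\phi)$, and full $\nu$-basin via Birkhoff plus equivalence $\mu_\phi\sim\nu$), and the reasoning is correct. The paper does not give its own argument here --- it simply cites \cite[Chapter~4]{PrzyUrb10} --- and your sketch is precisely the proof that citation refers to, so the two approaches coincide.
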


\begin{proof}
  See \cite[Chapter 4]{PrzyUrb10}.
\end{proof}

The following result shows that positively invariant sets
with positive reference measure have mass uniformly bounded
away from zero.

\begin{theorem}\label{teorema-que garante-a-existencia-do-disco-no-conj-inv}
  In the same setting of Theorem
  \ref{teorema-existencia-de-medida-nu-SRB-que-sao-est-de-equilibrio-para-distance-expanding},
  if $G$ is a $T$-invariant set such that $\nu(G)>0$, then
  there is a disk $\Delta$ of radius $\delta/4$ so that
  $\nu(\Delta\backslash G) = 0$.
\end{theorem}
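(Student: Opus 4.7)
\medskip

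\emph{Plan.} The strategy is to locate a $\nu$-density point $x_0\in G$ and use the H\"older bounded distortion available for $\phi$-conformal measures to transport density from a shrinking family of pullback neighborhoods of $x_0$ out to balls of uniform size $\eta$ around the orbit $\{T^n x_0\}$, then extract a fixed limit ball via compactness. Throughout, the fact that $G$ (and hence $G^c$) is fully $T$-invariant is what allows the pushforward of $G^c\cap W_n$ to remain inside $G^c$.

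\smallskip

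\emph{Step 1 (density point and pullback).} Conformal measures for H\"older potentials on open distance-expanding topologically transitive systems are doubling, so Lebesgue differentiation applies to $\nu$; choose $x_0\in G$ a $\nu$-density point, i.e.
$$\lim_{r\to 0}\frac{\nu(B(x_0,r)\setminus G)}{\nu(B(x_0,r))}=0.$$
Set $y_n:=T^n(x_0)$ and let $W_n:=(T^n)^{-1}_{x_0}(B(y_n,\eta))$ be the inverse branch of $B(y_n,\eta)$ containing $x_0$. By the expansion inequality (\ref{condicao-de-expansao}), $T^n|_{W_n}: W_n\to B(y_n,\eta)$ is a homeomorphism and $W_n\subset B(x_0,\eta\lambda^{-n})$; standard expanding-map geometry also gives $W_n\supset B(x_0,c\eta\lambda^{-n})$ for some uniform $c>0$, so this ``bounded eccentricity'' transfers density at $x_0$ to
$$\lim_{n\to\infty}\frac{\nu(W_n\setminus G)}{\nu(W_n)}=0.$$

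\smallskip

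\emph{Step 2 (distortion and push-forward).} For $y,z\in W_n$ the expansion gives $d(T^j y,T^j z)\leq 2\eta\lambda^{j-n}$ for $0\leq j\leq n$, and H\"older continuity of $\phi$ with exponent $\alpha$ yields
$$|S_n\phi(y)-S_n\phi(z)|\leq [\phi]_\alpha(2\eta)^\alpha\sum_{k=1}^{\infty}\lambda^{-\alpha k}=:C,$$
independent of $n$. By Theorem~\ref{existencia-de-medidas-conforme}, $\nu(T^n A)=\int_A\lambda^n e^{-S_n\phi}\,d\nu$ for every Borel $A\subset W_n$, so for any $A_1,A_2\subset W_n$,
$$\nu(T^n A_1)\,\nu(A_2)\leq e^{2C}\,\nu(T^n A_2)\,\nu(A_1).$$
Since $G$ is $T$-invariant, so is $G^c$, and the injectivity of $T^n$ on $W_n$ gives $T^n(W_n\cap G^c)=B(y_n,\eta)\cap G^c$ and $T^n(W_n)=B(y_n,\eta)$. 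Applying the previous estimate with $A_1=W_n\cap G^c$, $A_2=W_n$ yields
$$\frac{\nu(B(y_n,\eta)\setminus G)}{\nu(B(y_n,\eta))}\leq e^{2C}\,\frac{\nu(W_n\setminus G)}{\nu(W_n)}\xrightarrow[n\to\infty]{}0,$$
and in particular $\nu(B(y_n,\eta)\setminus G)\to 0$. By compactness pass to a subsequence with $y_{n_k}\to y_\infty$; for large $k$ one has $B(y_\infty,\eta/2)\subset B(y_{n_k},\eta)$, hence $\nu(B(y_\infty,\eta/2)\setminus G)=0$. This gives the required disk $\Delta:=B(y_\infty,\delta/4)$ with $\delta=2\eta$.

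\smallskip

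The most delicate point is the bounded eccentricity of the pullback $W_n$ in Step~1: one needs $W_n$ to contain a ball of radius comparable to $\mathrm{diam}(W_n)$ in order to transfer density of $G$ at $x_0$ from balls to the $W_n$. This is immediate when $T$ is Lipschitz (then $W_n\supset B(x_0,\eta L^{-n})$, so the eccentricity is $L/\lambda$); in full generality it follows from the standard inverse-branch geometry for open distance-expanding maps combined with the doubling property of H\"older-conformal measures (see Przytycki-Urbanski, Ch.~4). The rest of the argument is a direct combination of the conformality formula, the H\"older distortion estimate, and the $T$-invariance of $G$.
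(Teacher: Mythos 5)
Your proposal follows the same overall strategy as the proof the paper invokes (it simply cites Varandas--Viana, Lemma~5.3): locate a point where $G$ has high density, pull back to an inverse-branch neighborhood $W_n$, transport density forward using conformality and the H\"older distortion of $S_n\phi$, exploit the full $T$-invariance of $G$, and extract a uniform-radius ball by compactness. Steps~2 and~3 are correct and are the heart of the argument: the bound $|S_n\phi(y)-S_n\phi(z)|\le C$ on $W_n$, the identity $T^n(W_n\cap G^c)=B(y_n,\eta)\cap G^c$ (which uses $T^{-1}G=G$ together with injectivity of $T^n$ on $W_n$), the resulting inequality $\nu(B(y_n,\eta)\setminus G)/\nu(B(y_n,\eta))\le e^{2C}\,\nu(W_n\setminus G)/\nu(W_n)$, and the compactness limit are all sound.

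Step~1, however, has a genuine gap and your hedge at the end does not close it. The inner-ball inclusion $W_n\supset B(x_0,c\eta\lambda^{-n})$ with $c$ \emph{uniform in $n$} is false. Indeed $W_n=T^{-n}_{x_0}(B(y_n,\eta))$ is exactly the dynamical ball $B(x_0,n+1,\eta)$, and even when $T$ is Lipschitz with constant $L$ one only gets $B(x_0,\eta L^{-n})\subset W_n\subset B(x_0,\eta\lambda^{-n})$, so the outer/inner radius ratio is $(L/\lambda)^n$, not $L/\lambda$ as you write; it diverges exponentially whenever $L>\lambda$, which is the generic expanding case. Consequently $\nu(W_n)$ has no a priori lower bound in terms of $\nu(B(x_0,\eta\lambda^{-n}))$, and the density-point estimate $\nu(W_n\setminus G)\le\nu(B(x_0,\eta\lambda^{-n})\setminus G)=o(\nu(B(x_0,\eta\lambda^{-n})))$ does not yield $\nu(W_n\setminus G)/\nu(W_n)\to0$. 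Doubling cannot repair this, since it only compares metric balls of comparable radii, and in the stated generality (continuous distance-expanding map on a compact metric space) metric Lebesgue differentiation for $\nu$ is itself not automatic. What the argument really needs is density of $G$ along the \emph{dynamical} balls $W_n=B(x_0,n+1,\eta)$. This does hold: for H\"older $\phi$ the conformality estimates give the tight Gibbs property $\nu(B(x,n,\eta))\asymp e^{S_n\phi(x)-nP}$ uniformly in $n$, which serves as a doubling/regularity property for dynamical balls and supports a Vitali-type covering argument (this is essentially the role of the hyperbolic-preball covering lemma, Lemma~\ref{lemma3.5}, in the paper). Once Step~1 is rephrased in these terms, your Steps~2 and~3 apply verbatim, since $T^n(W_n)=B(y_n,\eta)$ is a full metric ball.
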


\begin{proof}
  See the proof of \cite[ Lemma 5.3]{VarVia09}.
\end{proof}



Now we are ready to prove Theorem
\ref{mthm:theorem-conv-das-medidas-tipo-SRB}.

\begin{proof}[Proof of Theorem \ref{mthm:theorem-conv-das-medidas-tipo-SRB}]
  Let
  $\lambda_{n_j}\xrightarrow[j\rightarrow+\infty]{}\lambda$,
  where
  $\mathcal{L^*}_{\phi_{n_j}}(\nu_{n_j})=\lambda_{n_j}\nu_{n_j}$
  (it is easy to see that $\lambda>0$ since
  $\lambda_{n}=\mathcal{L}_{\phi_n}^*(\nu_n)(1)$ for all
  $n$).  Then for each $\varphi\in C^0(X,\RR)$ we have
\begin{eqnarray*}
  \int\varphi\,d(\mathcal{L}_{\phi_{n_j}}^*\nu_{n_j})=\int\mathcal{L}_{\phi_{n_j}}(\varphi)\,d\nu_{n_j}\xrightarrow[j\rightarrow+\infty]{}\int\mathcal{L}_{\phi}(\varphi)\,d\nu=\int\varphi\,d(\mathcal{L}_{\phi}^*\nu)
\end{eqnarray*}
thus,
$\mathcal{L}_{\phi_{n_j}}^*\nu_{n_j}\xrightarrow[j\rightarrow+\infty]{w^*}\mathcal{L}_{\phi}^*\nu$. Moreover,
$\lambda_{n_j}\nu_{n_j}\xrightarrow[j\rightarrow+\infty]{w^*}\lambda\nu$
and by uniqueness of the limit, it follows
that
$$\lambda\nu=\lim\limits_{j\rightarrow+\infty}\lambda_{n_j}\nu_{n_j}=\lim\limits_{j\rightarrow+\infty}\mathcal{L}_{\phi_{n_j}}^*\nu_{n_j}=\mathcal{L}_{\phi}^*\nu.$$
Thus, $\nu$ is a $\phi$-conformal measure.

Let $\mu_{n_j}$ be the
$\nu_{n_j}$-SRB measure and let
$\mu=\lim_{j\rightarrow+\infty}
\mu_{n_j}$. For each fixed
$\varepsilon>0$, let
$N=N(\varepsilon)$ be such that
$\dist(\mu_{n_j},\mu_{n_{m}})<\frac{\varepsilon}{4}$ and
$\dist(\mu_{n_j},\mu)<\frac{\varepsilon}{4}$ for all
$j,m\geq N$.  Thus, $A_{\varepsilon/4}(\mu_{n_j})\subset
A_{\varepsilon/2}(\mu_{n_{m}})$ and
$A_{\varepsilon/2}(\mu_{n_j})\subset
A_{\varepsilon}(\mu)$ for all $j,m\geq N$.

In fact, for each $x \in A_{\varepsilon/4}(\mu_{n_j})$ we
have $\dist(p\omega(x),\mu_{n_j})<\varepsilon/4$. Then,
\begin{eqnarray*}
\dist(p\omega(x),\mu_{n_m})\leq \dist(p\omega(x),\mu_{n_j})+\dist(\mu_{n_j},\mu_{n_{m}})<\frac{\varepsilon}{2}
\end{eqnarray*}
for all $j,m\geq N$. Therefore, $A_{\varepsilon/4}(\mu_{n_j})\subset A_{\varepsilon/2}(\mu_{n_{m}})$ for all $j,m\geq N$. Analogously,  $A_{\varepsilon/2}(\mu_{n_j})\subset A_{\varepsilon}(\mu)$.

Now, from Theorem \ref{teorema-que
  garante-a-existencia-do-disco-no-conj-inv}, for each
$j\geq1$ there exists a disk $\Delta_{n_j}$ of radius
$\delta_1/4$ around $x_{n_j}$ such that
$\nu_{n_j}(\Delta_{n_j}\backslash
A_{\varepsilon/2}(\mu_{n_j}))=0$. Let
$x=\lim_{j\to+\infty}x_{n_j}$ taking a
subsequence if necessary. By compactness, the sequence
$\big(\Delta_{n_j}\big)_{j}$ accumulates on a disc
$\tilde{\Delta}$ of radius $\delta_1/4$ around $x$. Let
$\Delta$ be the disk of radius $0<s\leq\delta_1/8$ around
$x$. Thus, there exists $N_0\geq N$ such that
$\Delta\subset \Delta_{n_j}$ for all $j\geq N_0$.

Let $0<s\leq \delta_1/8$ be such that
$\nu\Big(\partial\big(\Delta\backslash
A_{\varepsilon}(\mu)\big)\Big)=0$ and note that for all
$j\geq N_0$,
$$0=\nu_{n_j}\Big(\Delta_{n_j}\backslash A_{\varepsilon/2}(\mu_{n_j})\Big)\geq\nu_{n_j}\Big(\Delta\backslash A_{\varepsilon/2}(\mu_{n_j})\Big)\geq \nu_{n_j}\Big(\Delta\backslash A_{\varepsilon}(\mu)\Big),$$
since $\Delta\subset\Delta_{n_j}$ and
$ A_{\varepsilon/2}(\mu_{n_j})\subset A_{\varepsilon}(\mu)$
for all $j\geq N_0$. Thus, by weak$^*$ convergence
\begin{align}\label{equacao2}
  \nu\Big(\Delta\setminus A_{\epsilon}(\mu)\Big)
  =\lim_{j\to+\infty}\nu_{n_j}\Big(\Delta\setminus A_{\epsilon}(\mu)\Big)=0.
\end{align}
Since $\nu$ is a conformal measure, it follows from
\eqref{equacao2} and Proposition
\ref{prop:medida-positiva-em-abertos} that
$\nu(A_{\varepsilon}(\mu))\geq \nu(\Delta)>0$. Because
$\varepsilon>0$ is arbitrary, we conclude that $\mu$ is
$\nu$-SRB-like.

Let $\SP$ be a finite Borel partition of $X$ with diameter
not exceeding an expansive constant of $T$ such that
$\mu(\partial\SP) = 0$. Then $\SP$ generates the Borel
$\sigma$-algebra for every $T$-invariant Borel probability
measure in $X$ (see \cite[Lemma 2.5.5]{PrzyUrb10}). By
Kolmogorov-Sinai Theorem, this implies that
$\eta\mapsto h_{\eta}(T)=h_\eta(T,\SP)$ is upper
semi-continuous at $\mu$.


Moreover, since
$\int\phi_{n_j}\,d\mu_{n_j}\to\int\phi\,d\mu$,
then $\mu_{n_j}$ is an equilibrium state for
$(T,\phi_{n_j})$ (by Theorem
\ref{teorema-existencia-de-medida-nu-SRB-que-sao-est-de-equilibrio-para-distance-expanding})
and by continuity of $\varphi\mapsto P_{top}(T,\varphi)$
(see \cite[Theorem 9.7]{walters2000introduction}) it follows
that
$$h_{\mu}(T)+\int\phi\,d\mu\geq \lim\limits_{j\rightarrow+\infty}\left(h_{\mu_{n_j}}(T)+\int\phi_{n_j}d\mu_{n_j}\right)=\lim\limits_{j\rightarrow+\infty}P_{top}(T,\phi_{n_j})=P_{top}(T,\phi).$$

This shows that $\mu$ is an equilibrium state for $T$ with
respect to $\phi$.

Now we assume that $T$ is topologically exact. We know that
given $\varepsilon>0$ there exists a disk $\Delta$ such that
$\nu(\Delta\setminus A_{\epsilon}(\mu))=0$. Since
$\Delta\subset X$ is a non-empty open set, there exists
$N>0$ such that $X=T^N(\Delta)$. Moreover,
$A_{\epsilon}(\mu)=T(A_{\epsilon}(\mu))$, thus
$$0=\nu(T^N(\Delta\setminus A_{\epsilon}(\mu))
\geq\nu(T^N(\Delta)\setminus A_{\epsilon}(\mu))
=\nu(X\setminus A_{\epsilon}(\mu)).$$ Since $\epsilon>0$ was
arbitrary, this completes the proof of
Theorem~\ref{mthm:theorem-conv-das-medidas-tipo-SRB}.
\end{proof}

\section{ Distance expanding maps}
\label{sec:distance-expanding-maps}

Here we state the main results needed to obtain the proof of
Theorem
\ref{mthm:Toda-medida-nu-SRB-like-e-estado-de-equilibrio}. We
start by presenting some results about distance expanding
maps that will be used throughout this section. We close the
section with the proof of Theorem
\ref{mthm:Toda-medida-nu-SRB-like-e-estado-de-equilibrio}.

\subsection{Basic properties of distance expanding open maps}
\label{sec:basic-propert-distan}

In what follows, $X$ is a compact metric space.

\begin{lemma}\label{lemma3.1.2}
  If $T: X\rightarrow X$ is a continuous open map, then for
  every $\varepsilon> 0$ there exists $\delta> 0$ such that
  $T(B(x, \varepsilon)) \supset B(T (x), \delta)$ for every
  $x\in X$.
\end{lemma}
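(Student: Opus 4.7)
The plan is to argue by contradiction, using compactness of $X$ together with openness of $T$ to upgrade the pointwise ``openness at each $x$'' to a uniform quantitative statement.

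First I would fix $\varepsilon>0$ and suppose, toward a contradiction, that no $\delta>0$ works uniformly. Then for each $n\geq 1$ there exist $x_n\in X$ and $y_n\in X$ with $d(T(x_n),y_n)<1/n$ but $y_n\notin T(B(x_n,\varepsilon))$. By compactness of $X$, after passing to a subsequence I may assume $x_n\to x$ for some $x\in X$, and then continuity of $T$ together with $d(T(x_n),y_n)<1/n$ forces $y_n\to T(x)$.

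Next I would use openness of $T$ at the single point $x$ to produce a uniform $\delta_0$. Since $T$ is open, the set $T(B(x,\varepsilon/2))$ is open and contains $T(x)$, so there is $\delta_0>0$ with $B(T(x),\delta_0)\subset T(B(x,\varepsilon/2))$. For all sufficiently large $n$ I have simultaneously $d(x_n,x)<\varepsilon/2$ and $d(y_n,T(x))<\delta_0$. The triangle inequality then gives $B(x,\varepsilon/2)\subset B(x_n,\varepsilon)$, so
\[
y_n\in B(T(x),\delta_0)\subset T(B(x,\varepsilon/2))\subset T(B(x_n,\varepsilon)),
\]
contradicting the choice of $y_n$.

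The only conceptual step is the compactness/triangle-inequality argument that converts the \emph{local} openness of $T$ at each individual point into a \emph{uniform} $\delta$; the main potential obstacle is keeping track of the two radii ($\varepsilon$ and $\varepsilon/2$) carefully so that the inclusion $B(x,\varepsilon/2)\subset B(x_n,\varepsilon)$ really holds once $x_n$ is close enough to $x$. Everything else is routine.
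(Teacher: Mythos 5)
Your proof is correct. Note that the paper itself does not supply a proof of this lemma; it simply cites \cite[Lemma~3.1.2]{PrzyUrb10}. Your compactness-plus-contradiction argument is the standard one for this result: you correctly pass to a convergent subsequence $x_n\to x$, deduce $y_n\to T(x)$ from $d(T(x_n),y_n)<1/n$ and continuity, invoke openness of $T$ only at the limit point $x$ (with the shrunken radius $\varepsilon/2$) to obtain $\delta_0$, and then use $B(x,\varepsilon/2)\subset B(x_n,\varepsilon)$ for $n$ large to reach the contradiction. The bookkeeping of the two radii is exactly right, so there is no gap.
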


\begin{proof}
See \cite[Lemma 3.1.2]{PrzyUrb10}.
\end{proof}

\begin{remark}\label{def 3.1.3}
  If $T:X\rightarrow X$ is a distance expanding map, then
  by \eqref{eq3.0.1} and \eqref{condicao-de-expansao}, for
  all $x\in X$, the restriction $T|_{B(x,\varepsilon)}$ is
  injective and therefore it has a local inverse map on
  $T(B(x, \varepsilon))$.

  If additionally $T:X\rightarrow X$ is an open map, then,
  in view of Lemma~\ref{lemma3.1.2}, the domain of the
  inverse map contains the ball $B(T(x),\delta)$. So it
  makes sense to define the restriction of the inverse map,
  $T^{-1}_x:B(T(x),\delta)\rightarrow B(x,\varepsilon)$ at
  each $x\in X$.
\end{remark}

\begin{lemma}\label{lemma3.1.4}
  Let $T:X\to X$ be an open distance expanding
  map. If $x\in X$ and $y, z\in B(T(x), \delta)$,
  then\footnote{Recall that $\lambda>1$ is the expansion
    rate; see~\eqref{eq3.0.1} and~\eqref{condicao-de-expansao}.}
  $d(T^{-1}_x(y),T^{-1}_x(z))\leq \lambda^{-1}d(y,z)$.  In
  particular
  $T^{-1}_x(B(T(x), \delta))\subset
  B(x,\lambda^{-1}\delta)\subset B(x,\delta)$ and
\begin{eqnarray}\label{eq3.1.3}
B(T(x), \delta)\subset T(B(x, \lambda^{-1}\delta))
\end{eqnarray}
for each small enough $\delta > 0$.
\end{lemma}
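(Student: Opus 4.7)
The plan is to reduce everything to a single application of the expansion inequality \eqref{condicao-de-expansao} to the preimages $T^{-1}_x(y),T^{-1}_x(z)$, after first checking that these preimages are close enough for \eqref{condicao-de-expansao} to be applicable.

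First I would fix the setup: choose $\varepsilon\in(0,\eta)$ so small that, by the expansion property, $T|_{B(x,\varepsilon)}$ is injective (this is part of Remark~\ref{def 3.1.3}), and then invoke Lemma~\ref{lemma3.1.2} to obtain $\delta_0>0$ such that $T(B(x,\varepsilon))\supset B(T(x),\delta_0)$. This gives a well-defined local inverse $T^{-1}_x:B(T(x),\delta_0)\to B(x,\varepsilon)$. From now on I restrict attention to $\delta\le\delta_0$.

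Next, for $y,z\in B(T(x),\delta)$ put $y'=T^{-1}_x(y)$ and $z'=T^{-1}_x(z)$; both lie in $B(x,\varepsilon)$, so the triangle inequality yields $d(y',z')\le 2\varepsilon<2\eta$. Thus \eqref{condicao-de-expansao} applies and gives
\[
d(y,z)=d(T(y'),T(z'))\ge \lambda\, d(y',z'),
\]
which is exactly $d(T^{-1}_x(y),T^{-1}_x(z))\le \lambda^{-1}d(y,z)$. Specializing to $z=T(x)$ (so $z'=x$) yields, for every $y\in B(T(x),\delta)$,
\[
d(T^{-1}_x(y),x)\le \lambda^{-1}d(y,T(x))<\lambda^{-1}\delta,
\]
which is the inclusion $T^{-1}_x(B(T(x),\delta))\subset B(x,\lambda^{-1}\delta)$; the further inclusion into $B(x,\delta)$ is immediate from $\lambda>1$. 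Finally, for \eqref{eq3.1.3}, any $y\in B(T(x),\delta)$ satisfies $y=T(T^{-1}_x(y))$ with $T^{-1}_x(y)\in B(x,\lambda^{-1}\delta)$, so $y\in T(B(x,\lambda^{-1}\delta))$.

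The only real issue is the quantifier ``for all $\delta>0$ small enough'': one needs to pin down $\delta$ so that both the local inverse is defined on $B(T(x),\delta)$ and its image stays inside a ball of radius $<\eta$ where \eqref{condicao-de-expansao} is guaranteed. Once $\varepsilon<\eta$ and the corresponding $\delta_0$ from Lemma~\ref{lemma3.1.2} are fixed, every $\delta\le\delta_0$ works, so this is not a genuine obstacle, just a matter of stating the uniform choice carefully (if desired, uniformly in $x$ by compactness of $X$).
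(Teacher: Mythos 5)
Your proof is correct. The paper itself gives no argument for this lemma (it simply cites Lemma~3.1.4 of \cite{PrzyUrb10}), so there is nothing to compare against in the source; but your proof is the standard one and closes all the gaps cleanly: you verify injectivity of $T$ on $B(x,\varepsilon)$ from the expansion condition, invoke Lemma~\ref{lemma3.1.2} to get the domain $B(T(x),\delta_0)$ of the local inverse, note that the two preimages $y',z'$ both lie in $B(x,\varepsilon)$ so that $d(y',z')<2\eta$ and \eqref{condicao-de-expansao} applies, and then the three claimed conclusions follow by specialization (set $z=T(x)$ for the first inclusion, apply $T$ for \eqref{eq3.1.3}). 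Your closing remark about the quantifier ``for all $\delta$ small enough'' is the right thing to say: once $\varepsilon<\eta$ and $\delta_0$ from Lemma~\ref{lemma3.1.2} are fixed, any $\delta\le\delta_0$ works, and by Lemma~\ref{lemma3.1.2} this $\delta_0$ can be chosen uniformly in $x$, which is what the statement intends.
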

\begin{proof}
  See \cite[Lemma 3.1.4]{PrzyUrb10}.
\end{proof}

\begin{definition}\label{def3.1.5}
  Let $T:X\rightarrow X$ be an open distance expanding
  map. For every $x\in X$, $n\geq 1$ and
  $j = 0, 1, \ldots, n-1$, we write $x_j = T^{j}(x)$. In
  view of Lemma \ref{lemma3.1.4} the composition
$$T^{-1}_{x_0}\circ T^{-1}_{x_1}\circ \cdots\circ T^{-1}_{x_{n-1}}:B(T^n(x),\delta)\rightarrow X$$
is well-defined and will be denoted by $T^{-n}_x$.
\end{definition}

\begin{lemma}\label{lemma-das-propried-basicas}
  Let $T:X\rightarrow X$ be an open distance expanding
  map. For every $x\in X$ we have:
\begin{enumerate}
  \item $T^{-n}(A)=\bigcup\limits_{y\in T^{-n}(x)}T^{-n}_y(A)$ for all $A\subset B(x,\delta)$;
  \item $d(T^{-n}_x(y),T^{-n}_x(z))\leq \lambda^{-n}d(y,z)$ for all $y,z\in B(T^n(x),\delta)$;
  \item $T^{-n}_x(B(T^n(x),r))\subset B(x,\min\{\varepsilon,\lambda^{-n}r\})$ for all $r\leq \delta$.
\end{enumerate}
\end{lemma}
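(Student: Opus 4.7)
The three items should be proved in the order (2), (3), (1), since (1) is the subtlest and benefits from the contraction estimates.

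For item (2), I would argue by induction on $n$, the base case $n=1$ being exactly Lemma~\ref{lemma3.1.4}. Writing $T^{-n}_x = T^{-1}_{x_0}\circ T^{-(n-1)}_{x_1}$, and noting that the definition aligns (the orbit of $x_1=T(x)$ is $x_1,x_2,\ldots,x_n$, so $T^{-(n-1)}_{x_1}$ is defined on $B(x_n,\delta)=B(T^n(x),\delta)$), the inductive hypothesis gives $d(T^{-(n-1)}_{x_1}(y),T^{-(n-1)}_{x_1}(z))\leq\lambda^{-(n-1)}d(y,z)$; then applying Lemma~\ref{lemma3.1.4} to $T^{-1}_{x_0}$ on the ball $B(x_1,\lambda^{-(n-1)}\delta)\subset B(x_1,\delta)$ contributes the extra factor $\lambda^{-1}$, giving the bound.

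For item (3), I would combine item (2) with Remark~\ref{def 3.1.3}. Applying item (2) with $z=T^n(x)$ yields $d(T^{-n}_x(y),x)\leq\lambda^{-n}d(y,T^n(x))<\lambda^{-n}r$, so $T^{-n}_x(B(T^n(x),r))\subset B(x,\lambda^{-n}r)$. For the $B(x,\varepsilon)$ inclusion, note that each local inverse $T^{-1}_{x_j}$ has image contained in $B(x_j,\varepsilon)$ by Remark~\ref{def 3.1.3}; in particular the outermost inverse $T^{-1}_{x_0}$ lands in $B(x,\varepsilon)$.

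For item (1), the inclusion $\supset$ is immediate: if $w=T^{-n}_y(a)$ with $a\in A$, then $T^n(w)=a\in A$. The reverse inclusion is the delicate part. Given $w\in T^{-n}(A)$, set $w_j:=T^j(w)$ so that $w_n=:a\in A\subset B(x,\delta)$. I would reconstruct the inverse branch passing through $w$ by pulling $x$ back along the orbit of $w$: define $z_n=x$, and inductively $z_j:=T^{-1}_{w_j}(z_{j+1})$ for $j=n-1,\ldots,0$. Each step is legitimate because $z_{j+1}\in B(w_{j+1},\lambda^{-(n-1-j)}\delta)\subset B(w_{j+1},\delta)$, which is the domain of $T^{-1}_{w_j}$; this is the same contraction argument that underlies item (2). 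Setting $y:=z_0$, one has $T(z_j)=z_{j+1}$ for every $j$, hence $T^n(y)=x$, so $y\in T^{-n}(x)$. Finally, $T^{-n}_y$ is built from the orbit $y=y_0,y_1,\ldots,y_n=x$ and the uniqueness of the inverse branch in $B(w_{n-1},\varepsilon)$ (guaranteed by Remark~\ref{def 3.1.3}) forces $T^{-1}_{y_{n-1}}(a)=w_{n-1}$, and inductively $T^{-j}_y(a)=w_{n-j}$ so that $T^{-n}_y(a)=w$. The main obstacle is this last bookkeeping: one must confirm that the branch $T^{-n}_y$ constructed from the orbit of $y$ actually agrees, at the point $a$, with the branch following the orbit of $w$, which is precisely where the uniqueness of local inverses on $\varepsilon$-balls (from expansion) plays its role.
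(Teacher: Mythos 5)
Your proof is correct, and it is precisely the detailed expansion of what the paper dispatches in a single line (``The lemma follows from Lemma~\ref{lemma3.1.4}''). The inductive derivation of item (2) from the one-step contraction, the reduction of item (3) to item (2) together with Remark~\ref{def 3.1.3}, and the careful treatment of item (1) by pulling $x$ back along the orbit of $w$ and then invoking uniqueness of the local inverse on $\varepsilon$-balls are exactly the ingredients that the paper's one-line reference leaves implicit; the only quibble is the notation $T^{-j}_y(a)$ in the final bookkeeping, which does not quite match Definition~\ref{def3.1.5} for $j<n$, but the intended partial composition $T^{-1}_{y_{n-j}}\circ\cdots\circ T^{-1}_{y_{n-1}}$ applied to $a$ is clear from context.
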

\begin{proof}
  The statements follow from Lemma \ref{lemma3.1.4}.
\end{proof}

For more details on the proofs in this subsection, see
\cite[Subsection 3.1]{PrzyUrb10}.

\subsection{Conformal measures}

Here we relate $\phi$-conformal measures with the
topological pressure of $\phi$. Next results ensure that
$\phi$-conformal measures for expanding dynamics with
continuous potentials are Gibbs measures; see
\cite{keller98,PrzyUrb10} for more details.

\begin{proposition}\label{proposicao1}
  Let $T:X\rightarrow X$ an open distance expanding
  topologically transitive map, $\nu=\nu_{\phi}$ a conformal
  measure associated the a continuous function
  $\phi:X\rightarrow \RR$ and $J_\nu T=\lambda e^{-\phi}$
  the Jacobian for $T$ with respect to the measure
  $\nu$. Given $\delta>0$, for all $x\in X$ and all
  $n \geq1$ there exists $\alpha(\varepsilon)>0$ such that
\begin{eqnarray*}
\alpha(\varepsilon)e^{-n\delta}\leq\displaystyle\frac{\nu((B(x,n,\varepsilon)))}{\exp{(S_n\phi(x)-Pn)}}\leq e^{n\delta}
\end{eqnarray*}
for all $\varepsilon>0$ small enough, where $P=\log\lambda$.
\end{proposition}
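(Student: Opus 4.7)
The plan is to combine the conformality relation $\nu(T^n A)=\int_A J_\nu T^n\,d\nu$ (valid on sets where $T^n$ is injective), the bound $J_\nu T^n = \lambda^n e^{-S_n\phi}$, uniform continuity of $\phi$ on the Bowen ball, and Proposition~\ref{prop:medida-positiva-em-abertos} for a uniform lower bound on balls.

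First I would fix $\delta>0$ and, by uniform continuity of $\phi$ on the compact space $X$, choose $\varepsilon_0>0$ so small that $d(y,z)<\varepsilon_0$ implies $|\phi(y)-\phi(z)|<\delta$. Shrinking $\varepsilon_0$ further if needed (below the expansive constant $\eta$), the distance-expanding property together with Remark~\ref{def 3.1.3} and Lemma~\ref{lemma-das-propried-basicas} guarantee that $T^n$ is injective on $B(x,n,\varepsilon)$ for every $x\in X$, $n\geq 1$ and $\varepsilon\le\varepsilon_0$. Then for any $y\in B(x,n,\varepsilon)$ we have $d(T^jx,T^jy)\le\varepsilon$ for $0\le j\le n-1$, hence
\[
|S_n\phi(y)-S_n\phi(x)|\le n\delta.
\]

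Next, using the conformality $\mathcal L^*_\phi\nu=\lambda\nu$, iteration gives $J_\nu T^n=\lambda^n e^{-S_n\phi}$, so injectivity of $T^n$ on $B(x,n,\varepsilon)$ yields
\[
\nu\bigl(T^n(B(x,n,\varepsilon))\bigr)=\int_{B(x,n,\varepsilon)}\lambda^n e^{-S_n\phi(y)}\,d\nu(y).
\]
From the estimate on $S_n\phi$ and $P=\log\lambda$ we deduce the two-sided bound
\[
\lambda^n e^{-S_n\phi(x)-n\delta}\,\nu(B(x,n,\varepsilon))\le\nu\bigl(T^n(B(x,n,\varepsilon))\bigr)\le \lambda^n e^{-S_n\phi(x)+n\delta}\,\nu(B(x,n,\varepsilon)).
\]
For the upper bound on the ratio, the right-hand side is at most $1$ because $\nu$ is a probability, giving
\[
\nu(B(x,n,\varepsilon))\le e^{S_n\phi(x)-nP}\,e^{n\delta}.
\]

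For the lower bound I would apply Lemma~\ref{lemma-das-propried-basicas}(3): the inverse branch $T^{-n}_x$ maps $B(T^n(x),\varepsilon)$ into $B(x,\lambda^{-j}\varepsilon)\subset B(x,\varepsilon)$ at every step, so $T^{-n}_x(B(T^n(x),\varepsilon))\subset B(x,n,\varepsilon)$, and hence $B(T^n(x),\varepsilon)\subset T^n(B(x,n,\varepsilon))$. Combining with the conformality computation above,
\[
\nu(B(T^n(x),\varepsilon))\le \nu\bigl(T^n(B(x,n,\varepsilon))\bigr)\le \lambda^n e^{-S_n\phi(x)+n\delta}\,\nu(B(x,n,\varepsilon)).
\]
By Proposition~\ref{prop:medida-positiva-em-abertos} there exists $\alpha(\varepsilon)>0$ with $\nu(B(y,\varepsilon))\ge\alpha(\varepsilon)$ for every $y\in X$; applied to $y=T^n(x)$ this yields
\[
\nu(B(x,n,\varepsilon))\ge\alpha(\varepsilon)\,e^{S_n\phi(x)-nP}\,e^{-n\delta},
\]
which is the desired lower bound.

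The main obstacles are modest but worth flagging: one must pin down $\varepsilon_0$ small enough that (i) $T$ restricted to each $\varepsilon$-ball is injective with well-defined inverse branches (so that $J_\nu T^n$ can be iterated multiplicatively), (ii) Lemma~\ref{lemma-das-propried-basicas} applies to push $B(T^n(x),\varepsilon)$ back inside $B(x,n,\varepsilon)$, and (iii) the oscillation of $\phi$ on $\varepsilon$-balls is below $\delta$. All three constraints depend only on $\delta$ and the expansion/metric data, so they can be met simultaneously. Once this is arranged, the two-sided estimate and the uniform lower bound from Proposition~\ref{prop:medida-positiva-em-abertos} combine directly to give the claimed Gibbs-type inequality.
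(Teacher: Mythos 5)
Your proof is correct and follows the paper's own argument: compare $\nu(B(x,n,\varepsilon))$ with $\nu(T^n(B(x,n,\varepsilon)))$ via the conformality integral $\int_{B(x,n,\varepsilon)}\lambda^n e^{-S_n\phi}\,d\nu$ and uniform continuity of $\phi$, then bound that quantity above by $1$ (since $\nu$ is a probability) and below by the uniform constant $\alpha(\varepsilon)$ from Proposition~\ref{prop:medida-positiva-em-abertos}. The only cosmetic difference is that you use the one inclusion $B(T^nx,\varepsilon)\subset T^n(B(x,n,\varepsilon))$ you need for the lower bound, whereas the paper states the equality $\nu(B(T^nx,\varepsilon))=\nu(T^n(B(x,n,\varepsilon)))$ outright.
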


\begin{proof}
  Given $\delta>0$, there exist $\gamma>0$ such that, for
  all $x, y\in X$ with $d(x, y) <\gamma$, we have
  $|\phi(x)-\phi (y)|<\delta$. For any given fixed
  $0<\varepsilon<\gamma$, $x\in X$ and $n\geq 1$, we have
\begin{eqnarray}\label{igualdade*}
\nu(B(T^nx,\varepsilon))=\nu(T^n(B(x,n,\varepsilon)))=\int_{B(x,n,\varepsilon)}J_\nu T^n\, d\nu.
\end{eqnarray}
Hence, since $\nu$ attributes mass uniformly bounded away
from zero to balls of fixed radius, we obtain using the
uniform continuity of $\phi$
\begin{eqnarray*}
\alpha(\varepsilon)&\leq&\nu(B(T^nx,\varepsilon))=\int_{B(x,n,\varepsilon)}J_\nu T^n\, d\nu= \int_{B(x,n,\varepsilon)}\lambda^{n}e^{-S_n\phi}\, d\nu\\
&\leq&\lambda^{n}e^{-S_n\phi(x)+n\delta} \cdot \nu(B(x,n,\varepsilon)),
\end{eqnarray*}
and also, since $\nu$ is a probability measure
\begin{eqnarray*}
1\geq\nu(B(T^nx,\varepsilon))= \int_{B(x,n,\varepsilon)}J_\nu T^n\, d\nu\geq \lambda^{n}e^{-S_n\phi(x)-n\delta} \cdot \nu(B(x,n,\varepsilon)).
\end{eqnarray*}
Hence
$\alpha(\varepsilon)e^{-n\delta}
\le
\nu(B(x,n,\varepsilon))\exp(Pn-S_n\phi(x))\leq e^{n\delta}$
where $P=\log \lambda$.
\end{proof}

\begin{lemma}\label{lema1}
  Let $T:X\rightarrow X$ an open distance expanding
  topologically transitive map, let $\phi:X\rightarrow \RR$
  be continuous and $\nu$ a probability measure such that
  $\mathcal{L}^*_{\phi}(\nu)=\lambda\nu$. Then
  $P_{\textrm{top}}(T,\phi)\leq P=\log \lambda$.
\end{lemma}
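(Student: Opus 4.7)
The plan is to bound the pressure from above by estimating $\sum_{x\in E}e^{S_n\phi(x)}$ over $(n,\varepsilon)$-separated sets $E$, using the Gibbs-type inequality from Proposition~\ref{proposicao1} as the key tool, and then invoking the separated-set definition of $P_{\text{top}}$ (Definition~\ref{definicao-de-pressao-top-via-conj-separados}).

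First, I fix an arbitrary $\delta>0$. By Proposition~\ref{proposicao1}, for all $\varepsilon>0$ sufficiently small we have the two-sided estimate
\[
\alpha(\varepsilon)e^{-n\delta}\leq\frac{\nu(B(x,n,\varepsilon))}{\exp(S_n\phi(x)-Pn)}\leq e^{n\delta}
\quad\text{for every }x\in X,\ n\geq1.
\]
The lower inequality, rearranged, gives
\[
e^{S_n\phi(x)}\leq \frac{1}{\alpha(\varepsilon/2)}\,e^{n\delta}\,e^{Pn}\,\nu(B(x,n,\varepsilon/2)).
\]
This is the central estimate; the rest of the argument just sums it over a separated set.

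Next, let $E\subset X$ be any $(n,\varepsilon)$-separated set. A standard observation is that the dynamical balls $\{B(x,n,\varepsilon/2)\}_{x\in E}$ are pairwise disjoint: if $z$ lay in two such balls centered at distinct $x,y\in E$, the triangle inequality applied orbit-wise would force $d(T^jx,T^jy)\leq\varepsilon$ for every $0\leq j\leq n-1$, contradicting separation. Therefore
\[
\sum_{x\in E}\nu(B(x,n,\varepsilon/2))\leq \nu(X)=1,
\]
and summing the Gibbs bound yields
\[
\sum_{x\in E}e^{S_n\phi(x)}\leq \frac{1}{\alpha(\varepsilon/2)}\,e^{n\delta}\,e^{Pn}.
\]
Taking $\frac{1}{n}\log$, passing to the supremum over $(n,\varepsilon)$-separated sets $E$, and then taking $\limsup_{n\to\infty}$, the constant factor $\alpha(\varepsilon/2)^{-1/n}$ disappears and we obtain
\[
\limsup_{n\to\infty}\frac{1}{n}\log\sup_{E}\sum_{x\in E}e^{S_n\phi(x)}\;\leq\; P+\delta.
\]

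Finally, letting $\varepsilon\to 0$ (the inequality holds uniformly for all small enough $\varepsilon$, so this limit is harmless) and recalling Definition~\ref{definicao-de-pressao-top-via-conj-separados}, the left-hand side becomes $P_{\text{top}}(T,\phi)$, so $P_{\text{top}}(T,\phi)\leq P+\delta$. Since $\delta>0$ was arbitrary, we conclude $P_{\text{top}}(T,\phi)\leq P=\log\lambda$. There is no significant obstacle: the only subtlety is remembering to use radius $\varepsilon/2$ (rather than $\varepsilon$) in the dynamical balls so that disjointness is automatic from the $(n,\varepsilon)$-separation hypothesis, and to keep the multiplicative constant $\alpha(\varepsilon/2)^{-1}$ inside the $\log$ where it contributes nothing after dividing by $n$.
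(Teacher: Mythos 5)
Your proof is correct and is essentially the same as the paper's: both apply Proposition~\ref{proposicao1} to bound $e^{S_n\phi(x)}$ by $\alpha(\varepsilon/2)^{-1}e^{n(P+\delta)}\nu(B(x,n,\varepsilon/2))$, exploit pairwise disjointness of the $\varepsilon/2$-dynamical balls over an $(n,\varepsilon)$-separated set, sum to get total mass at most~$1$, and let $\delta\to0$. The only (cosmetic) difference is that you work with an arbitrary separated set rather than a maximal one, which is all that is needed for the upper bound.
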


 \begin{proof}
   Given $\delta>0$ we take $\epsilon>0$ small enough as in
   Proposition~\ref{proposicao1} and $n\geq 1$. Let
   $E_n\subset X$ be a maximal $(n,\varepsilon)$-separated
   set. Therefore $\{B(x,n,\varepsilon):\, x\in E_n\}$
   covers $X$, $\{B(x,n,\varepsilon/2):\, x\in E_n\}$ is
   a collection of pairwise disjoint open sets. Thus
\begin{eqnarray*}
  \sum_{x\in E_n}e^{S_n\phi(x)}
  \leq
  \sum_{x\in E_n}\nu(B(x,n,\varepsilon/2))\cdot
  \frac{e^{n(P+\delta)}}{\alpha(\varepsilon/2)}
  \leq
  \frac{e^{n(P+\delta)}}{\alpha(\varepsilon/2)}
\end{eqnarray*}
by Proposition~\ref{proposicao1}.  Therefore
$P_{\textrm{top}}(T,\phi)
=
\lim_{\epsilon\to0}\limsup_{n\to+\infty}\frac1n\log\sum_{x\in
  E_n}e^{S_n\phi(x)}
\leq P+\delta,$
and since $\delta>0$ can be taken small enough, we conclude
that $P_{\textrm{top}}(T,\phi)-P\leq0$.
\end{proof}

\subsection{Constructing an arbitrarily small initial
  partition with negligible boundary}
\label{sec:initial-partit}

Let $\nu$ be a Borel probability measure on the compact
metric space $X$.  We say that $T$ is $\nu$-\emph{regular
  map} or $\nu$ is regular for $T$ if $T_*\nu \ll \nu$, that
is, if $E\subset X$ is such that $\nu(E) = 0$, then
$\nu(T^{-1}(E))= 0$. It follows that $T$ admits a Jacobian
with respect to a $T$-regular measure. We write $\partial A$
for the topological boundary and $\interior(A)$ for the
interior of the subset $A$ of the metric space $X$. We also
write $\# A$ for the number of elements of the subset $A$.

\begin{lemma}\label{lemma: Initial partition}
  Let $X$ be a compact metric space
  and $\nu$ be a regular reference measure for $T$, positive
  on non-empty open sets and let $\delta>0$ be given.
  \begin{enumerate}
  \item There exists a finite partition $\SP$ of $X$ with
    $\diam(\SP)<\delta$ such that every atom $P\in\SP$ has
    non-empty interior and $\nu(\partial\SP)=0$
  \end{enumerate}
  Let us choose one interior point $w(P)$ in each $P\in \SP$
  and form the set $C_0=\{w(P), P\in\SP\}$.
  \begin{enumerate}
  \item[(2)] If $(\nu_k)_{k\ge1}$ is a sequence of probability
    measures, then there exists a partition $\tilde\SP$ of $X$
    satisfying
    \begin{enumerate}
    \item $\diam(\tilde{\SP})<\delta$;
  \item $w\in \interior(\tilde{\SP}(w)), \ \forall w\in
    C_0$;
  \item $\#\tilde{\SP}=\#\SP$;
  \item
    $\nu(\partial\tilde{\SP})=0=\nu_k(\partial\tilde{\SP})$
    for all $k\geq 1$; and
  \item $\nu(\SP(w)\triangle\tilde{\SP}(w))<\delta$ for
    all $w\in C_0$.
    \end{enumerate}
  \end{enumerate}
  Moreover, if additionally $T$ is a continuous open map,
  then $\nu(\partial g(P))=\nu_k(\partial g(P))=0$ for each
  $P\in\SP$ and inverse branch $g$ of $T^n$, $k,n\ge1$.
\end{lemma}

Above we write $\SP(w)$ for the atom of $\SP$
containing $w$ and similarly for $\tilde{\SP}(w)$, and also
$A\triangle B$ for the symmetrical difference
$A\setminus B\cup B\setminus A$ of $A$ and $B$. The
partition $\tilde{\SP}$ can be seen as a small perturbation
of the partition $\SP$.

\begin{proof}
  Let us fix $\delta>0$,  take $0<\delta_1\leq\delta$ and
  $\mathcal{B} =\{B(\tilde{x}_l, \delta_1/8), l =1, \ldots,
  q\}$ be a finite open cover of $X$ by $\delta_1/8$- balls
  such that $\nu(\partial B(\tilde{x}_l, \delta_1/8))=0$ for
  all $l =1, \ldots, q$. Such value of $\delta_1>0$ exists
  since the set of values of $\delta_1$ satisfying
  $\nu(\partial B(\tilde{x}_l, \delta_1/8))>0$ for some
  $l\in\{1,\ldots, q\}$ is denumerable, because $\nu$ is a
  finite measure and $X$ is
  separable.

  From this we define a finite partition $\SP$ of $X$ as
  follows. We start by setting
\begin{align}\label{eq:geraPart}
  P_1 := B\left(\tilde{x}_1, \delta_1/8\right),
  \, P_2:=B\left(\tilde{x}_2, \delta_1/8\right)\backslash
  P_1,
  \ldots,\,
  P_k:=B\left(\tilde{x}_k, \delta_1/8\right)\backslash(P_1\cup P_2\cup\ldots\cup P_{k-1})
\end{align}
for all $k=2,\ldots, S$ where $S\leq q$.  We note that, if
$P_k\neq\emptyset$, then $P_k$ has non-empty interior (since
$X$ is separable); diameter smaller than
$2\frac{\delta_1}{8}<\frac{\delta_1}{4}$, and the boundary
$\partial P_k$ is a (finite) union of pieces of boundaries
of balls with zero $\nu$-measure. We define $\SP$ to
be the collection of elements $P_k$ constructed above which
are non-empty. This completes the proof of the first item.

Now for item (2), let $\nu_k$ be an enumerable family of
probability measures on $X$, and set
$d_0 = \min\{d(w,\partial \mathcal{P}), w\in C_0\} >
0$. Then we take $0<\eta<\min\{d_0/2,\delta_1/8\}$ (note
that $d_0$ does not depend on $k$) such that
\begin{eqnarray}\label{2222}
\nu\left(\partial
  B(\tilde x_j,\eta+\delta_1/8)\right)
  =0
  =\nu_k\left(\partial B(\tilde x_j,\eta+\delta_1/8)\right),
  \quad j=1,\dots, q; \quad k\ge1,
\end{eqnarray}
where the centers are the ones from the previous
construction.

Again we note that such value of $\eta$ exists since the set
of values of $\eta>0$ such that some of the expressions in
\eqref{2222} is positive for some $j,k$ is at most
enumerable, since the measures involved are probability
measures.  Thus we may take $\eta> 0$ satisfying
\eqref{2222} arbitrarily close to zero.

We consider now the finite open cover of $X$ given by
$
  \tilde{\SB}=
  \left\{B(\tilde{x}_j, \eta+\delta_1/8):  j= 1,\ldots, q\right\}
  $
 and construct the partition $\tilde{\SP}$ induced by
 $\tilde{\SB}$ following the same procedure as before and
 the some order of construction of the atoms of $\SP$, as
 in~\eqref{eq:geraPart}.

 We note that
 $d(w,\partial B(\tilde{x}_j,
 \eta+\delta_1/8))>d_0-\eta>d_0/2$ for all $w\in C_0$ and
 $j=1, \dots, q$ by construction. Therefore, each $w\in C_0$
 is contained in some atom $P_{w}\in\tilde{\SP}$. Moreover,
 there cannot be distinct $w,w'\in C_0$ such that
 $w'\in P_{w}$ by the choice of $\eta$. Thus, the number
 $\#\SP$ of atoms of $\SP$ is less than or equal to the
 number $\#\tilde{\SP}$ of atoms of $\tilde{\SP}$. On the
 other hand, by construction $\#\tilde{\SP}\le S=\#\SP$,
 because this is the number of elements $\#\tilde{\SB}=S$ of
 the open cover $\tilde{\SB}$. In this way we conclude that
 the partition $\tilde{\SP}$ has the same number of atoms as
 $\SP$, that is, we have proved item (c).

 Moreover, it is now easy to see by the choice of $\eta$
 that $\tilde{\SP}$ satisfies also (a), (b) and (d). In
 addition, every element of $\tilde{\SB}$ is in a
 $\eta$-neighborhood of the corresponding element of
 $\SB$. Since we can take $\eta>0$ smaller without loosing
 any of the previous conclusions, we can also obtain subitem
 (e).  This completes the proof of item (2).

Finally, if $T$ is open and continuous, then inverse
branches of $T$ are well defined by
Lemma~\ref{lemma3.1.2}. Since $T$ is $\nu$-regular and
$\partial g(P)=g(\partial P)$, then boundary of $g(P)$ still
has zero $\nu$- and $\nu_k$-measure for every atom $P\in\SP$
and every inverse branch $g$ of $T^n$, for each
$k,n\geq1$.

The proof of Lemma~\ref{lemma: Initial partition} is complete.
\end{proof}

\subsection{Expansive maps and existence of equilibrium states}
\label{sec:expans-maps-existenc}

A continuous transformation $T:X\rightarrow X$ of a compact
metric space $X$ equipped with a metric $\rho$ is said to be
(positively) expansive if and only if
\begin{align*}
  \exists\delta>0\quad\text{such that}\quad
  [\rho(T^n(x), T^n(y))\leq\delta,\, \forall n\geq 0]
  \implies
  x = y
\end{align*}
and the number $\delta$ above is an \emph{expansive
  constant} for $T$.

\begin{theorem}\label{semicontinuidade-para-positively-expansive-maps}
  If $T:X\rightarrow X$ is positively expansive, then the
  function $\SM_T\ni\mu\mapsto h_{\mu}(T)$ is upper
  semi-continuous and each continuous potential
  $\phi:X\rightarrow\RR$ has an equilibrium state.
\end{theorem}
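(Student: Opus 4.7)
The plan is to reduce the upper semi-continuity of $\mu\mapsto h_\mu(T)$ to the continuity of $\mu\mapsto H(\SP^q,\mu)$ for a well-chosen finite partition $\SP$, and then to deduce the existence of equilibrium states from the Variational Principle together with a standard compactness argument on $\SM_T$.

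Fix $\mu_0\in\SM_T$ and let $\delta>0$ be an expansive constant for $T$. First, I would construct a finite Borel partition $\SP$ of $X$ with $\diam(\SP)<\delta$ and $\mu_0(\partial\SP)=0$, in the spirit of Lemma~\ref{lemma: Initial partition}: cover $X$ by finitely many open balls of radius less than $\delta/2$ whose boundaries are $\mu_0$-null (only countably many radii fail this condition, since $\mu_0$ is finite), and then form the partition by successively subtracting the balls from one another. Because $\mu_0$ is $T$-invariant and $\partial\SP^q\subseteq\bigcup_{j=0}^{q-1}T^{-j}(\partial\SP)$, we obtain $\mu_0(\partial\SP^q)=0$ for every $q\geq 1$, where $\SP^q=\bigvee_{j=0}^{q-1}T^{-j}\SP$.

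Since $\diam(\SP)<\delta$ and $T$ is positively expansive, $\SP$ is a one-sided generator: if $x,y$ lie in the same atom of $\SP^n$ for every $n\geq 0$, then $d(T^{n}x,T^{n}y)\leq\diam(\SP)<\delta$ for all $n\geq 0$, which forces $x=y$ by the definition of expansiveness. Hence $\bigvee_{n=0}^{\infty}T^{-n}\SP$ generates the Borel $\sigma$-algebra mod $\mu$ for every $\mu\in\SM_T$, and the Kolmogorov--Sinai theorem yields $h_{\mu}(T)=h(\SP,\mu)$ for every $\mu\in\SM_T$. Using subadditivity of $q\mapsto H(\SP^q,\mu)$ we have
$$h(\SP,\mu)=\inf_{q\geq 1}\frac{1}{q}H(\SP^q,\mu).$$
Because $\mu_0(\partial\SP^q)=0$, the Portmanteau theorem implies that $\mu\mapsto H(\SP^q,\mu)=-\sum_{P\in\SP^q}\mu(P)\log\mu(P)$ is continuous at $\mu_0$ (with the convention $0\log 0=0$). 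Thus $\mu\mapsto h(\SP,\mu)=h_\mu(T)$ is an infimum of maps continuous at $\mu_0$, and therefore upper semi-continuous at $\mu_0$. As $\mu_0\in\SM_T$ was arbitrary, $\mu\mapsto h_\mu(T)$ is upper semi-continuous on all of $\SM_T$.

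For the second assertion, given a continuous potential $\phi:X\to\RR$ the map $\mu\mapsto\int\phi\,d\mu$ is weak$^*$-continuous on the weak$^*$-compact set $\SM_T$, so $\mu\mapsto h_\mu(T)+\int\phi\,d\mu$ is upper semi-continuous on a compact set and attains its supremum at some $\mu^*\in\SM_T$; by the Variational Principle this supremum equals $P_{\mathrm{top}}(T,\phi)$, so $\mu^*$ is a $\phi$-equilibrium state. The main technical obstacle I anticipate is verifying the generator property and the simultaneous small diameter/null boundary condition for $\SP$; once these are secured the remainder is a clean application of subadditivity, Portmanteau and compactness.
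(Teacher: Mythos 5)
Your proof is correct and supplies a complete, self-contained argument for a result the paper itself only cites (``See \cite[Theorem 2.5.6]{PrzyUrb10}''). The route you take — build a finite partition of diameter below the expansive constant with $\mu_0$-null boundary, observe that expansiveness makes it a topological generator so Kolmogorov--Sinai gives $h_\mu(T)=h(\SP,\mu)$ for all $\mu$, write $h(\SP,\mu)=\inf_q\tfrac1q H(\SP^q,\mu)$ as an infimum of maps that are continuous at $\mu_0$ via Portmanteau and $T$-invariance of $\mu_0$, and then conclude existence of equilibrium states from weak$^*$-compactness of $\SM_T$ and the Variational Principle — is exactly the standard proof in the cited reference.
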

\begin{proof}
See \cite[Theorem 2.5.6]{PrzyUrb10}.
\end{proof}

\begin{theorem}\label{distance-expanding-implica-positively-expansive-maps}
  Every distance expanding map is positively expansive.
\end{theorem}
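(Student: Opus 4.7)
The plan is to show that any distance expanding map is positively expansive by taking the expansivity constant to be (any positive number less than) $\eta$, exploiting the uniform multiplicative expansion of the metric under iteration.

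First I would fix $\delta>0$ with $\delta<2\eta$; for instance, set $\delta:=\eta$. I then take an arbitrary pair $x,y\in X$ with $d(T^n(x),T^n(y))\leq\delta$ for all $n\geq0$, and the goal is to deduce $x=y$.

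The core inductive step is to prove that $d(T^n(x),T^n(y))\geq\lambda^n d(x,y)$ for every $n\geq0$. The base case $n=0$ is trivial. For the induction, assume the inequality holds at level $n$. By the standing hypothesis on the orbit we have $d(T^n(x),T^n(y))\leq\delta<2\eta$, so the distance expanding condition \eqref{condicao-de-expansao} applies at the point $T^n(x)$ and yields
\[
d(T^{n+1}(x),T^{n+1}(y))\geq\lambda\, d(T^n(x),T^n(y))\geq\lambda^{n+1}d(x,y),
\]
completing the induction.

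Combining this with the orbit bound gives $\lambda^n d(x,y)\leq\delta$ for every $n\geq0$. Since $\lambda>1$, letting $n\to\infty$ forces $d(x,y)=0$, i.e.\ $x=y$, which is precisely positive expansivity with expansive constant $\delta$. There is no serious obstacle here: the entire argument is the standard ``if two orbits stay close, the initial distance is dilated by $\lambda^n$ at each step until one of them leaves the $2\eta$-diagonal'' reasoning, and all we need is that the chosen $\delta$ is small enough ($\delta<2\eta$) so that the expansion hypothesis can be reapplied at every iterate.
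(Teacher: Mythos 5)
Your proof is correct, and it is essentially the standard argument that the paper delegates to \cite[Theorem 3.1.1]{PrzyUrb10}: take the expansivity constant to be any $\delta<2\eta$, note that the expansion hypothesis then applies at every iterate, deduce $d(T^n x,T^n y)\geq\lambda^n d(x,y)$ by induction, and let $\lambda^n\to\infty$. No gap.
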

\begin{proof}
See \cite[Theorem 3.1.1]{PrzyUrb10}.
\end{proof}

\begin{corollary}
  If $T:X\rightarrow X$ is distance expanding, then each
  continuous potential $\phi:X\rightarrow\RR$ has an
  equilibrium state.  In particular,
  $\SK_r(\phi)=\{\mu\in\SM_T: h_{\mu}(T)+\int\phi d\mu\geq
  P_{\textrm{top}}(T,\phi)-r\}\neq \emptyset$ for all
  $r\geq0$ .
\end{corollary}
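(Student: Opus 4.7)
The plan is to combine the two immediately preceding theorems in a direct chain. First I would invoke Theorem~\ref{distance-expanding-implica-positively-expansive-maps} to conclude that $T$ being distance expanding implies that $T$ is positively expansive. Then I would apply Theorem~\ref{semicontinuidade-para-positively-expansive-maps} to this positively expansive map $T$ with the given continuous potential $\phi$, which directly yields the existence of an equilibrium state $\mu \in \SM_T$, that is, some $\mu$ with
\begin{equation*}
h_\mu(T) + \int \phi\, d\mu = P_{\textrm{top}}(T,\phi).
\end{equation*}

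For the ``in particular'' part, I would simply observe that any equilibrium state $\mu$ belongs to $\SK_0(\phi)$, since the defining inequality $h_\mu(T) + \int\phi\, d\mu \geq P_{\textrm{top}}(T,\phi)$ holds with equality. Because the defining condition of $\SK_r(\phi)$ is monotone in $r$, namely $\SK_0(\phi) \subset \SK_r(\phi)$ for every $r \geq 0$, we conclude $\mu \in \SK_r(\phi)$ and hence $\SK_r(\phi) \neq \emptyset$ for all $r \geq 0$.

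There is no real obstacle here: the statement is a formal corollary obtained by chaining the two cited results from \cite{PrzyUrb10}. The only thing worth a brief comment is the trivial monotonicity $\SK_0(\phi)\subset\SK_r(\phi)$, which follows immediately from Definition~\ref{definition-of-SK(phi)}.
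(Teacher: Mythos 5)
Your proof is correct and follows exactly the same route as the paper's: chain Theorem~\ref{distance-expanding-implica-positively-expansive-maps} with Theorem~\ref{semicontinuidade-para-positively-expansive-maps} to get an equilibrium state, then note that an equilibrium state lies in $\SK_0(\phi)\subset\SK_r(\phi)$ for all $r\geq 0$. The paper simply calls this ``immediate'' without spelling out the monotonicity remark, which you state explicitly.
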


\begin{proof}
  The proof is immediate from Theorem
  \ref{distance-expanding-implica-positively-expansive-maps}
  and Theorem
  \ref{semicontinuidade-para-positively-expansive-maps}.
\end{proof}

\subsection{Large deviations}
\label{sec:large-deviat-lemma}

The statement of
Theorem~\ref{mthm:Toda-medida-nu-SRB-like-e-estado-de-equilibrio}
is a consequence of the following more abstract result,
inspired in \cite[Lemma 4.3]{CatsEnrich2012} and in
\cite[Proposition 6.1.11]{keller98}.

\begin{proposition}
  \label{Lm:Large deviation lemma for distance expanding}
  Let $T:X\rightarrow X$ be an open distance expanding
  topologically transitive map and let
  $\phi :X\rightarrow\RR$ be continuous. Fix
  $\nu=\nu_{\phi}$ a conformal measure, $r > 0$ and consider
  the weak$^*$ distance defined in (\ref{eq13}). Then, for
  all $0 < \varepsilon < r$, there exists $n_0\geq 1$ and
  $\kappa>0$ such that
\begin{align}\label{eq151}
  \nu\Big(\big\{x\in X; \dist (\sigma_n(x),
  \mathcal{K}_r(\phi))\geq\varepsilon\big\}\Big)
  <
  \kappa\exp{\left[n(\varepsilon-r)\right]},\ \ \forall n\geq n_0.
\end{align}
\end{proposition}

\begin{proof}
  We know by Theorem \ref{existencia-de-medidas-conforme}
  and Proposition \ref{prop:medida-positiva-em-abertos} that
  all the (necessarily existing) conformal measures $\nu$
  are positive on non-empty open sets and
  $J_{\nu}T=\lambda e^{-\phi}$ is the Jacobian for $T$ with
  respect to the measure $\nu$.

  Let $\nu$ be a conformal measure, fix $r>0$ and let
  $0<\varepsilon<r$. For $\varepsilon /6$, fix a constant
  $\gamma>0$ of uniform continuity of $\phi$, i.e.,
  $|\phi(x)-\phi(y)|<\frac{\varepsilon }{6}$ whenever
  $d(x,y)<\gamma$. Let us fix $0<\xi<\gamma$ and a partition
  $\SP$ of $X$ as in Lemma~\ref{lemma: Initial partition}
  such that $\diam(\SP)<\frac{\xi}{4}$ and choose one
  interior point in each atom of $\SP$ to construct
  $C_0=\{w_1,\ldots,w_S\}$ where $S=\#\SP$.


  Define
  $\mathcal{A}:=\{\mu\in\mathcal{M}_T; d(\mu,
  \mathcal{K}_r(\phi))\geq \varepsilon\}$ and note that
  $\mathcal{A}$ is weak* compact, so it has a finite
  covering $B_1,\ldots, B_{\kappa}$ for minimal cardinality
  $\kappa\geq1$, with open balls $B_i\subset \mathcal{M}$ of
  radius $\frac{\varepsilon}{3}$. For any fixed $n\geq 1$
  write $C_{n,i}=\{x\in X: \sigma_n(x)\in B_i\}$,
  $C_n =\bigcup_{i=1}^{\kappa}C_{n,i}$,
  $\tilde{C}_{n,i}=\{x\in X: \sigma_n(x)\in \tilde{B}_i\}$
  and $\tilde{C}_n =\bigcup_{i=1}^{\kappa}\tilde{C}_{n,i}$,
  where $\tilde{B}_i$ are open balls concentric with $B_i$
  of radius $\frac{2\varepsilon}{3}$ for
  $i=1,\ldots, \kappa$.

  We note that $C_{n,i}\subset\tilde{C}_{n,i}$ and,
  moreover,
  $\{x\in X:
  d(\sigma_n(x),\mathcal{K}_r(\phi))
  \geq\varepsilon\}
  \subset
  C_{n}\subset \tilde{C}_{n}$.

\begin{lemma}\label{af111}
  For each $1\leq i\leq \kappa$ there exists $n_i>0$ such
  that
  $\nu(C_{n,i})\leq
  \exp{[n(\varepsilon-r)]}$ for all
  $n\geq n_i$.
\end{lemma}

First, let us see that it is enough to prove
Lemma~\ref{af111} to finish the proof of
Proposition~\ref{Lm:Large deviation lemma for distance
  expanding}, and then we prove Lemma~\ref{af111} in the
following Subsection~\ref{sec:exponent-upper-bound}. In
fact, if Lemma~\ref{af111} holds, then put
$n_0=\max\limits_{1\leq i\leq \kappa}n_i$
and we get
\begin{align*}
\nu(C_{n})\leq\sum\limits_{i=1}^{\kappa}\nu(C_{n,i})\leq
\kappa \exp{\left[n\left(\varepsilon-r\right)\right]}
\end{align*}
for all $n\geq
n_0$, as needed to conclude the proof of the statement of
Proposition~\ref{Lm:Large deviation lemma for distance
  expanding}.
\end{proof}

\subsubsection{Exponential upper bound}
\label{sec:exponent-upper-bound}
Now we present the proof of Lemma~\ref{af111}.

\begin{proof}[Proof of Lemma~\ref{af111}]For $x\in C_{n,i}$ let
$P\in\SP$ be the atom such that $T^n(x)\in P$ and set
$Q=T_x^{-n}(P)$. Then the family $\SQ_n$ of all such sets
$Q$ is finite since both $\SP$ and the number of inverse
branches are finite.  Moreover, by the expression of
$J_{\nu}T$ in terms of $\phi$
\begin{eqnarray*}
  \nu(Q\cap C_{n,i})
  =
  \int_{T^n(Q\cap C_{n,i})}J_{\nu}T^{-n}d\nu
  =
  \int_{T^n(Q\cap C_{n,i})}\exp
  \left[\sum\limits_{j=0}^{n-1}\phi\circ
  T^j-n\log\lambda\right]
  \circ T_x^{-n}\,d\nu.
\end{eqnarray*}
We observe that if $\nu(C_{n,i})=0$, then Lemma~\ref{af111}
becomes trivially proved. Consider the finite family of
atoms
$\{Q_1,\ldots, Q_N\}=\{Q\in\SQ_n: \nu(Q\cap C_{n,i})>0\}$
which has $N=N(n,i)$ elements for some $N\geq 1$.

We note that
$\nu(C_{n,i})=\sum_{k=1}^{N}\nu(Q_k\cap
C_{n,i})$. For each $k=1,\ldots, N$,
let us take 
$x_k\in Q_k$ such that $T^n(x_k)=w_j$ for some
$j=1,\ldots,S$. We recall that $w_j$ are interior points of
each atom of the partition $\SP$, so there is only one
$j=j_{k,n}$ for each $x_k\in Q_k$ such that $T^n(x_k)=w_j$.

Since $\diam(\SP^n)<\diam(\SP)<\frac{\xi}{4}<\xi$ for all
$n>0$ (recall Lemma \ref{lemma-das-propried-basicas}),
then $|\phi(T^j(x_k))-\phi(T^j(y))|<\frac{\varepsilon }{6}$
for all $y\in Q_k$ and $j=0,\ldots, n-1$. Choosing
$y_k\in Q_k\cap C_{n,i}$ for each $k$, then we get
\begin{align*}
  \nu(C_{n,i})
  &=
    \sum_{k=1}^{N}\nu(Q_k\cap C_{n,i})
    \leq
    \sum_{k=1}^{N}\int\limits_{T^n(Q_k\cap C_{n,i})}
    \exp
    \left[\sum_{j=0}^{n-1}\big(\phi(T^j(y_k))+\frac{\varepsilon}{6}\big)
    -n\log\lambda\right]\,d\nu
  \\
  &\leq
    \sum_{k=1}^{N}
    \exp
    \left[\sum_{j=0}^{n-1}\big(\phi(T^j(x_k))+
    \frac{\varepsilon}{6}
    +\frac{\varepsilon}{6}\big)-n\log\lambda\right]
    \cdot\nu(T^n(Q_k\cap C_{n,i}))
  \\
  &\leq
    \exp
    \left[n\big(\frac{\varepsilon}{3}-\log\lambda\big)\right]
    \sum_{k=1}^{N}
    e^{ S_n\phi(x_k)}.
\end{align*}
Defining $L_N:= \sum_{k=1}^{N}e^{S_n\phi(x_k)}$ and
$\lambda_k:=L_N^{-1}e^{S_n\phi(x_k)}$, then
$\sum_k\lambda_k=1$ and also
$ \log
L_N=\sum_{k=1}^{N}\lambda_k\big(S_n\phi(x_k)-\log\lambda_k\big)$
by Lemma~\ref{lemma10.4.4}. Then
$
  \nu(C_{n,i})
  \le
  \exp\big[n\big(\frac{\varepsilon}{3}-\log\lambda
  +\frac1n\log L_N\big)\big].$
  

Define the probability measures
\begin{align*}
  \nu_n:=\sum_{k=1}^{N}\lambda_k\delta_{x_k}
  \qand
  \mu_n:=\frac{1}{n}\sum_{j=0}^{n-1}(T^j)^*(\nu_n)
  =\sum_{k=1}^{N}\lambda_k\sigma_n(x_k)
\end{align*}
so that we may rewrite
$\log L_N=n\int\phi d\mu_n+H(\SP^n,\nu_n)$
since the atoms of $\SP^n$ contain at most one $x_k$. We fix a
weak$^*$ accumulation point $\mu$ of $(\mu_n)_n$ and take a
subsequence $n_j\nearrow\infty$ such that $\mu_{n_j}\to\mu$
in the weak$^*$ topology and also
\begin{eqnarray}\label{ls11}
  \limsup_{n\rightarrow+\infty}\frac{1}{n}\log
  \nu(C_{n,i})
  =\lim_{j\rightarrow+\infty}\frac{1}{n_j}\log \nu(C_{n_j,i}).
\end{eqnarray}


We now use item (2) of Lemma~\ref{lemma: Initial partition}
with $\nu=\mu$ and $\nu_k=\mu_k, k\ge1$ to perform a small
perturbation of the original partition $\SP$ into
$\tilde{\SP}$, so that the points $x_k \in Q_k$ are still
given by the image of the same $n$ th inverse branch of $T$
of an atom of the perturbed partition $\tilde{\SP}$, and the
boundaries of $\tilde{\SP}$ also have negligible $\mu$- and
$\mu_n$-measure.

Note that
$H(\tilde{\SP}^{n_j},\nu_{n_j})=H(\SP^{n_j},\nu_{n_j})$ by
definition of the $\nu_{n_j}$ and by construction of
$\tilde{\SP}$ as a perturbation of $\SP$. Using items (2a)
and (2d) of Lemma~\ref{lemma: Initial partition} we get, by
Lemma \ref{lm2a}, that there exists $j_0>0$ such that
\begin{eqnarray}\label{443}
  \frac{1}{n_j}H(\SP^{n_j},\nu_{n_j})
  =
  \frac{1}{n_j}H(\tilde{\SP}^{n_j},\nu_{n_j})
  \leq
  h_{\mu}(T)+\frac{\varepsilon}{3}, \ \ \forall j\geq j_0.
\end{eqnarray}
For the partition $\tilde{\SP}$, we have that
$x_k\in \tilde{Q}_{k}\cap Q_{k}$ and
$Q_{k}\cap C_{n,i}\neq\emptyset$, where
$\tilde{Q}_{k}=\tilde{\SP}^n(x_k)$ for $k=1,\ldots, N(n,i)$.

Let us choose $y\in Q_k\cap C_{n,i}$. Then
$\sigma_{n}(y)\in B_i$. Since
$d(T^j(x_k),T^j(y))\leq \diam(\SP)<\xi$ for all
$j=0,\ldots, n-1$, then
$d(\sigma_n(x_k),\sigma_n(y))<\varepsilon/3$ by Lemma
\ref{lm1}. Moreover, as $B_i\subset\tilde{B}_i$
concentrically, we obtain
$\sigma_n(x_k)\in \tilde{B}_i$.

In addition, since the ball $\tilde{B}_i$ is convex and $\mu_n$ is a
convex combination of the measures $\sigma_n(x_k)$ (recall
that $\sum\lambda_k=1$), we deduce that
$\mu_n\in\tilde{B}_i$.

Therefore, the weak$^*$ limit $\mu$ of any convergent
subsequence of $\{\mu_n\}_{n}$ belongs to the weak$^*$
closure $\overline{\tilde{B}}_i$. Since the ball
$\tilde{B}_i$ has radius $\frac{2\varepsilon}{3}$ we
get that $\mu\in\SM_T\setminus\SK_r(\phi)$. Then
$\int\phi d\mu+h_{\mu}(T)<P_{\textrm{top}}(T,\phi)-r$, and
hence
\begin{eqnarray*}
\nu(C_{n,i})&\leq& \exp{\left[n\big(\frac{\varepsilon}{3}-\log\lambda\big)\right]}\cdot L=\exp{\left[n\big(\frac{\varepsilon}{3}-\log\lambda\big)+\log L\right]}\\
&=&\exp{\left[n\big(\frac{\varepsilon}{3} -\log\lambda+\int\phi d\mu_n+\frac{1}{n}H(\SP^n,\nu_n)\big)\right]}.
\end{eqnarray*}
Because
$\int\phi d\mu_{n_j}\xrightarrow[j\rightarrow\infty]{}
\int\phi d\mu$ (by weak$^*$ convergence), there exists
$j_1>0$ such that
$\int\phi d\mu_{n_j}\leq\int\phi d\mu+\varepsilon/3$ for all
$j>j_1$.  By (\ref{443}) there exists $j_0>0$ such that
$\frac{1}{n_j}H(\SP^{n_j},\nu_{n_j})\leq
h_{\mu}(T)+\varepsilon/3$, $\forall j\geq j_0$. Taking
$j_2=\max\{j_1,j_0\}$ we have for all $j>j_2$
\begin{align*}
  \nu(C_{n_j,i})
  &\leq
  \exp{\left[
      n_j\big(\frac{\varepsilon}{3}-\log\lambda+\int\phi
      d\mu +\frac{\varepsilon}{3}+
      h_{\mu}(T)+\frac{\varepsilon}{3}\big)
    \right]}
  \\
  &=
  \exp{\left[n_j\big(\varepsilon-\log\lambda+\int\phi d\mu +
      h_{\mu}(T)\big)\right]}
  \\
  &\leq
  \exp{\left[
      n_j\big(\varepsilon-r
      -\log\lambda+P_{\textrm{top}}(T,\phi)\big)
    \right]}
  \leq \exp{\left[n_j\big(\varepsilon-r\big)\right]},
\end{align*}
where the last inequality follows from Lemma \ref{lema1}.

Finally, by the choice of $n_j$ satisfying~\eqref{ls11}, we
conclude that there exist $n_0>0$ such that
$\nu(C_{n,i})\le \exp[n(\varepsilon-r)]$ for all
$n\geq n_0$. This completes the proof of Lemma~\ref{af111}.
\end{proof}

\subsection{Proof of Theorem \ref{mthm:Toda-medida-nu-SRB-like-e-estado-de-equilibrio}}

Given $r>0$, consider the (non-empty) set
$\mathcal{K}_r(\phi)\subset \SM_T$. By the upper
semicontinuity of the metric entropy (see Theorem
\ref{semicontinuidade-para-positively-expansive-maps}), we
have that $\SK_r(\phi)$ is closed, hence, weak$^*$
compact. Since $\{\mathcal{K}_r(\phi)\}_{r}$ is decreasing
with $r$, we have
$\mathcal{K}_0(\phi)=\bigcap_{r>0}\mathcal{K}_r(\phi)$.

By the Variational Principle
$h_{\mu}(T)+\int\phi \,d\mu\leq P_{\textrm{top}}(T,\phi)$
for all $\mu\in\SM_T$. So, to prove Theorem
\ref{mthm:Toda-medida-nu-SRB-like-e-estado-de-equilibrio},
we must prove that the set $\SW_T(\nu)$ of $\nu$-SRB-like
measures satisfy $\SW_T(\nu) \subset \mathcal{K}_r(\phi)$
for all $r > 0$, because
$\mathcal{K}_0(\phi)=\big\{\mu\in\mathcal{M}_T: \
h_{\mu}(T)+\int\phi
d\mu=P_{\textrm{top}}(T,\phi)\big\}$. Since
$\mathcal{K}_r(\phi)$ is weak$^*$ compact, we have

$$\mathcal{K}_r(\phi)=\bigcap_{\epsilon>0}\mathcal{K}^{\epsilon}_r(\phi),
\ \ \textrm{where} \ \mathcal{K}^{\epsilon}_r(\phi)
=\big\{\mu\in\mathcal{M}_T: \ \dist(\mu,\mathcal{K}_r(\phi))\leq \varepsilon\big\}$$
with the weak$^*$ distance defined in
(\ref{eq13}). Therefore, it is enough to prove that
$\SW_T(\nu)\subset \mathcal{K}^{\varepsilon}_r(\phi)$ for
all $0 < \varepsilon < r/2$ and for all $r>0$. By
Proposition \ref{proposition:Wf-compacto} and since
$\mathcal{K}^{\varepsilon}_r(\phi)$ is weak$^*$ compact, it
is enough to prove the following
\begin{lemma}
  The basin of attraction of
  $\mathcal{K}^{\varepsilon}_r(\phi)$
$$W^s(\mathcal{K}^{\varepsilon}_r(\phi)):=\big\{x\in X; p\omega(x)\subset \mathcal{K}^{\varepsilon}_r(\phi)\big\}.$$
has full $\nu$-measure:
$\nu\big(X\setminus
W^s(\mathcal{K}^{\varepsilon}_r(\phi))\big)=0$.
\end{lemma}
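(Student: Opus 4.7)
The plan is to deduce this full-measure statement as a direct application of the large deviation estimate of Proposition~1 (the ``Large deviation lemma for distance expanding'') combined with the Borel--Cantelli lemma. Since $0<\varepsilon<r/2<r$, Proposition~1 applies and provides constants $n_0\geq 1$ and $\kappa>0$ such that, writing
\[
E_n:=\bigl\{x\in X:\dist(\sigma_n(x),\mathcal{K}_r(\phi))\geq\varepsilon\bigr\},
\]
one has $\nu(E_n)<\kappa\,e^{n(\varepsilon-r)}$ for every $n\geq n_0$. Because $\varepsilon-r<0$, the geometric series $\sum_{n\geq n_0}\nu(E_n)$ converges.

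By the first Borel--Cantelli lemma, the set $E_\infty:=\bigcap_{N\geq n_0}\bigcup_{n\geq N}E_n$ has $\nu(E_\infty)=0$. Hence for $\nu$-a.e.\ $x\in X$ there exists $N(x)\geq n_0$ such that $\dist(\sigma_n(x),\mathcal{K}_r(\phi))<\varepsilon$ for every $n\geq N(x)$.

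Now I would show that any such $x$ belongs to $W^s(\mathcal{K}^{\varepsilon}_r(\phi))$. Indeed, if $\mu\in p\omega(x)$ then $\mu=\lim_{j\to\infty}\sigma_{n_j}(x)$ along some subsequence $n_j\nearrow\infty$; since eventually $n_j\geq N(x)$, the weak$^*$ lower semicontinuity (in fact continuity) of $\eta\mapsto\dist(\eta,\mathcal{K}_r(\phi))$ yields
\[
\dist(\mu,\mathcal{K}_r(\phi))=\lim_{j\to\infty}\dist(\sigma_{n_j}(x),\mathcal{K}_r(\phi))\leq\varepsilon,
\]
so $\mu\in\mathcal{K}^{\varepsilon}_r(\phi)$. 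Therefore $p\omega(x)\subset\mathcal{K}^{\varepsilon}_r(\phi)$, i.e.\ $x\in W^s(\mathcal{K}^{\varepsilon}_r(\phi))$. This gives $\nu(W^s(\mathcal{K}^{\varepsilon}_r(\phi)))=1$, completing the proof.

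There is essentially no obstacle beyond correctly invoking Proposition~1: the whole difficulty of the argument has been packed into that large deviation estimate. The only mildly subtle point is the passage from ``$\sigma_n(x)$ lies eventually in the $\varepsilon$-neighborhood of $\mathcal{K}_r(\phi)$'' to ``every weak$^*$-accumulation point of $(\sigma_n(x))_n$ lies in the closed $\varepsilon$-neighborhood'', which is handled by the continuity of distance to a fixed compact set in the (metrizable) weak$^*$ topology.
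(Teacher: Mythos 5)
Your proof is correct and follows the same approach as the paper: invoke the large deviation estimate (Proposition~\ref{Lm:Large deviation lemma for distance expanding}), sum the resulting geometric bounds and apply Borel--Cantelli, then observe that once $\sigma_n(x)$ is eventually $\varepsilon$-close to $\mathcal{K}_r(\phi)$, every weak$^*$ accumulation point lies in the (closed) set $\mathcal{K}^{\varepsilon}_r(\phi)$. Your version is in fact slightly more explicit at the final step, spelling out the continuity of $\eta\mapsto\dist(\eta,\mathcal{K}_r(\phi))$, which the paper leaves implicit.
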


\begin{proof}
  By Lemma~\ref{Lm:Large deviation lemma for distance
    expanding}, the subset
  $X_n=X_n(\epsilon,r)=\{x\in X: \sigma_n(x)\in
  \SM\setminus\mathcal{K}^{\varepsilon}_r(\phi)\}$ satisfies
  $\nu(X_n)\le \kappa e^{n(\varepsilon-r)}$ for some
  $\kappa>0$ any $n>n_0$, where $n_0=n_0(\epsilon)\ge1$,
  since $0<\varepsilon<r$.  This implies that
  $\sum_{n=1}^{+\infty} \nu(X_n)<\infty$. By
  the Borel-Cantelli Lemma, it follows that
$\nu\left(\cap_{n_r=1}^{\infty} \cup_{n=n_r}^{\infty} X_n\right)=0$.

In other words, for $\nu$-a.e. $x\in X$ there exists
$n_0\geq1$ such that
$\sigma_n(x)\in \mathcal{K}^{\varepsilon}_r(\phi)$ for all
$n\geq n_0$. Hence,
$p\omega(x)\subset \mathcal{K}^{\varepsilon}_r(\phi)$ for
$\nu$-almost all the points $x\in X$, as required.
\end{proof}

The proof of Theorem \ref{mthm:Toda-medida-nu-SRB-like-e-estado-de-equilibrio} is complete.

\begin{remark}\label{obs:medidas SRB-like sao quase estado de equilibrio}
  If the set $\SK_r(\phi)$ is not closed, we may substitute
  $\overline{\SK_r(\phi)}$ for $\SK_r(\phi)$ in the proof of
  Theorem
  \ref{mthm:Toda-medida-nu-SRB-like-e-estado-de-equilibrio},
  and by the same argument we conclude that
  $\SW_T(\nu)\subset\cap_{r>0}
  \overline{\SK_r(\phi)}$. Thus, in a more general context,
  where Propsition~\ref{Lm:Large deviation lemma for
    distance expanding} is valid and
  $\SK_{1/n}(\phi)\neq\emptyset$ for all $n\geq1$, we can
  say that $\nu$-SRB-like measures are "almost
  $\phi$-equilibrium states". Indeed, given
  $\mu \in \SW_f(\nu)$, then
  $\mu=\lim_{n\rightarrow+\infty}\mu_n$,
  $\mu_n\in\SK_{1/n}(\phi)$ for all $n\geq1$. Therefore, we
  can find a sequence of $T$-invariant probability measures
  so that
  $h_{\mu_n}(T)+\int\phi d\mu_n\geq P(T,\phi)-\frac{1}{n}$
  for all $n\geq 1$ and $\mu_n \to \mu$ in the weak*
  topology.
\end{remark}

To obtain a $\phi$ equilibrium state in the limit we need
only assume that $\phi$ is uniformly approximated by
continuous potentials, as follows.

\begin{corollary}
Let $T:X\rightarrow X$ be an open distance expanding topologically transitive map of a compact metric space $X$, $(\phi_n)_{n\geq1}$ a sequence of continuous potentials, $(\nu_n)_{n\geq1}$ a sequence of conformal measures associated to the $(T,\phi_n)$ and $\mu_n$ a sequence of $\nu_n$-SRB-like measures. Assume that
\begin{enumerate}
  \item $\phi_{n_j}\xrightarrow[j\rightarrow+\infty]{}\phi$ in the topology of uniform convergence;
  \item $\mu_{n_j}\xrightarrow[j\rightarrow+\infty]{w^*}\mu$ in the weak$^*$ topology.
\end{enumerate}
Then $\mu$ is an equilibrium state for the potential $\phi$.
\end{corollary}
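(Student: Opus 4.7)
The plan is to reduce the statement to an application of Theorem~\ref{mthm:Toda-medida-nu-SRB-like-e-estado-de-equilibrio} for each index $n_j$ followed by a limiting argument, using the standard semicontinuity properties of the entropy and pressure functionals. First, I would note that each $\mu_{n_j}$ lies in $\SM_T$ and that $\SM_T$ is weak$^*$ closed, so the limit $\mu$ is automatically $T$-invariant. By Theorem~\ref{mthm:Toda-medida-nu-SRB-like-e-estado-de-equilibrio} applied to the triple $(T,\phi_{n_j},\nu_{n_j})$, every $\nu_{n_j}$-SRB-like measure is a $\phi_{n_j}$-equilibrium state, hence
\begin{equation*}
h_{\mu_{n_j}}(T)+\int\phi_{n_j}\,d\mu_{n_j}=P_{\mathrm{top}}(T,\phi_{n_j}) \qquad \text{for every } j\geq 1.
\end{equation*}

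Next I would pass to the limit term-by-term. For the pressure, the map $\varphi\mapsto P_{\mathrm{top}}(T,\varphi)$ is continuous in the uniform topology on $C^0(X,\RR)$ (e.g.\ \cite[Theorem 9.7]{walters2000introduction}), so hypothesis (1) yields $P_{\mathrm{top}}(T,\phi_{n_j})\to P_{\mathrm{top}}(T,\phi)$. For the integrals, the bound
\begin{equation*}
\Bigl|\int\phi_{n_j}\,d\mu_{n_j}-\int\phi\,d\mu\Bigr|\leq \|\phi_{n_j}-\phi\|_{\infty}+\Bigl|\int\phi\,d\mu_{n_j}-\int\phi\,d\mu\Bigr|
\end{equation*}
combined with hypotheses (1) and (2) shows $\int\phi_{n_j}\,d\mu_{n_j}\to\int\phi\,d\mu$. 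For the entropy, since $T$ is distance expanding it is positively expansive (Theorem~\ref{distance-expanding-implica-positively-expansive-maps}), so by Theorem~\ref{semicontinuidade-para-positively-expansive-maps} the map $\eta\mapsto h_\eta(T)$ is upper semicontinuous on $\SM_T$ in the weak$^*$ topology, giving
\begin{equation*}
\limsup_{j\to\infty}h_{\mu_{n_j}}(T)\leq h_{\mu}(T).
\end{equation*}

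Combining these three facts, I would take the limit in the equilibrium identity: since the right-hand side converges to $P_{\mathrm{top}}(T,\phi)$ and $\int\phi_{n_j}\,d\mu_{n_j}\to\int\phi\,d\mu$, the entropies $h_{\mu_{n_j}}(T)$ converge to $P_{\mathrm{top}}(T,\phi)-\int\phi\,d\mu$, and upper semicontinuity forces
\begin{equation*}
P_{\mathrm{top}}(T,\phi)-\int\phi\,d\mu=\lim_{j\to\infty}h_{\mu_{n_j}}(T)\leq h_{\mu}(T).
\end{equation*}
The reverse inequality $h_{\mu}(T)+\int\phi\,d\mu\leq P_{\mathrm{top}}(T,\phi)$ is the Variational Principle, so equality holds and $\mu$ is a $\phi$-equilibrium state. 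There is no real obstacle here; the only point to be careful about is that continuity of the pressure and the existence of the equilibrium identity for $\phi_{n_j}$ both require the hypotheses of Theorem~\ref{mthm:Toda-medida-nu-SRB-like-e-estado-de-equilibrio} (open, distance expanding, topologically transitive), which are hypotheses of the corollary itself, so everything goes through cleanly.
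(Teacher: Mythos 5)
Your proof is correct and follows essentially the same approach as the paper's: apply Theorem~\ref{mthm:Toda-medida-nu-SRB-like-e-estado-de-equilibrio} to each pair $(T,\phi_{n_j})$, then pass to the limit using continuity of pressure, convergence of the integrals, and upper semicontinuity of entropy. The only cosmetic difference is that you invoke Theorems~\ref{distance-expanding-implica-positively-expansive-maps} and~\ref{semicontinuidade-para-positively-expansive-maps} for upper semicontinuity of $\eta\mapsto h_\eta(T)$, while the paper re-derives it via a generating partition with $\mu$-null boundary and the Kolmogorov--Sinai Theorem; both give the same ingredient.
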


\begin{proof}
  Let $\mu_{n_j}$ be a $\nu_{n_j}$-SRB-like measure and let
  $\mu=\lim_{j\rightarrow+\infty} \mu_{n_j}$. Since any
  finite Borel partition $\SP$ of $X$ with diameter not
  exceeding an expansive constant and satisfying
  $\mu(\partial\SP) = 0$ generates the Borel
  $\sigma$-algebra for every Borel $T$-invariant probability
  measure in $X$ (see \cite[Lemma 2.5.5]{PrzyUrb10}), then
  Kolmogorov-Sinai Theorem implies that
  $\eta\mapsto h_{\eta}(T)=h_\eta(T,\SP)$ is upper
  semi-continuous.

  Moreover, since
  $\int\phi_{n_j}\,d\mu_{n_j}\to\int\phi\,d\mu$,
  $\mu_{n_j}$ is an equilibrium state for $(T,\phi_{n_j})$
  (by Theorem
  \ref{mthm:Toda-medida-nu-SRB-like-e-estado-de-equilibrio})
  and by continuity of $\varphi\mapsto P_{top}(T,\varphi)$
  (see \cite[Theorem 9.7]{walters2000introduction}) it
  follows that
$$h_{\mu}(T)+\int\phi\,d\mu\geq \lim\limits_{j\rightarrow+\infty}\left(h_{\mu_{n_j}}(T)+\int\phi_{n_j}d\mu_{n_j}\right)=\lim\limits_{j\rightarrow+\infty}P_{top}(T,\phi_{n_j})=P_{top}(T,\phi).$$

This shows that $\mu$ is an equilibrium state for $T$ with
respect to $\phi$.
\end{proof}

\section{Entropy Formula}
\label{sec:Pesin's Entropy Formula}

Here we state the main results needed to obtain the proof of Theorem \ref{mthm:formula de pesin para difeo-local com HT}. Then we prove Theorem \ref{mthm:formula de pesin para difeo-local com HT} in the last subsection.

\subsection{Hyperbolic Times}

The main technical tool used in the study of non-uniformly expanding maps is the notion of hyperbolic times, introduced in \cite{alves2000srb}. We now outline some the properties of hyperbolic times.

\begin{definition}
Given $\sigma\in(0,1)$, we say that $h$ is a $\sigma$-hyperbolic time for a point $x\in M$ if for all $1\leq k \leq h$,
\begin{eqnarray}\label{th}
\prod\limits_{j=h-k}^{h-1}\|Df(f^j(x))^{-1}\|\leq \sigma^k
\end{eqnarray}
\end{definition}

\begin{remark}
  Throughout this section we cite results originally proved
  under the assumption that $f$ is of class $C^{2}$, or
  $f\in C^{1+ \alpha}(M,M)$ for some $0<\alpha<1$. But the
  cited results were proved without using the bounded
  distortion property, and therefore the proofs are easily
  adapted to our setting. Indeed, most of proofs are the
  same.
\end{remark}

\begin{proposition}\label{prop3.1}
  Given $0<\sigma< 1$, there exists $\delta_1 >0$ such that,
  whenever $h$ is a $\sigma$-hyperbolic time for a point
  $x$, the dynamical ball $B(x,h,\delta_1)$ is mapped
  diffeomorphically by $f^h$ onto the ball
  $B(f^h(x),\delta_1)$, with
\begin{align}\label{eq:backcontraction}
d(f^{h-k} (y), f^{h-k} (z))\leq \sigma^{k/2}\cdot d(f^h(y),
  f^h(z))
\end{align}
for every $1\leq k \leq h$ and $y,z\in B(x,h,\delta_1)$.
%
\end{proposition}
\begin{proof}
See Lemma 5.2 in \cite{ABV00}
\end{proof}


\begin{remark}\label{rmk0}
  For an open distance expanding and topologically
  transitive map $T$ of a compact metric space $X$, every
  time $h\ge1$ satisfies~\eqref{eq:backcontraction} for
  every $x\in X$.
\end{remark}

\begin{definition}
We say that the frequency of $\sigma$-hyperbolic times for $x\in M$ is positive, if there is some $\theta> 0$ such that all sufficiently for large $n\in\NN$ there are $l\geq \theta n$ and integers $1\leq h_1 < h_2<\ldots< h_l\leq n$ which are $\sigma$-hyperbolic times for $x$.
\end{definition}

The following Theorem ensures existence of infinitely many
hyperbolic times Lebesgue almost every point for
non-uniformly expanding maps. A complete proof can be found
in \cite[Section 5]{ABV00}.

\begin{theorem}\label{prop 2.12}
  Let $f:M \rightarrow M$ be a $C^1$ non-uniformly expanding
  local diffeomorphism. Then there are $\sigma\in (0,1)$ and
  there exists $\theta =\theta(\sigma)>0$ such that
  $\Leb$-a.e. $x\in M$ has infinitely many
  $\sigma$-hyperbolic times. Moreover, if we write
  $0 < h_1 < h_2 < h_3 <\ldots$ for the hyperbolic times of
  $x$, then their asymptotic frequency satisfies
  $\liminf_{N\rightarrow\infty} \#\{k\geq1: h_k\leq
  N\}/N\geq\theta$, for $\Leb-a.e. x\in M$.
\end{theorem}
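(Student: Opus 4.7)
The plan is to reduce the statement to a combinatorial selection principle---the \emph{Pliss lemma}---applied to the sequence $a_j(x):=-\log\|Df(f^{j-1}(x))^{-1}\|$. Since $f$ is a $C^1$ local diffeomorphism on the compact manifold $M$, the quantity $\|Df(x)^{-1}\|$ is bounded between two positive constants, so there exists a uniform $A>0$ with $|a_j(x)|\leq A$ for every $x\in M$ and every $j\geq 1$.

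First I would rephrase the hyperbolic-time condition additively: taking logarithms in $\prod_{j=h-k}^{h-1}\|Df(f^j(x))^{-1}\|\leq\sigma^k$ and reindexing yields the equivalent partial-sum inequality
\begin{equation*}
\sum_{j=m+1}^{h}a_j(x)\;\geq\;(h-m)(-\log\sigma)\qquad\text{for every }0\leq m<h,
\end{equation*}
so $h$ is a $\sigma$-hyperbolic time for $x$ precisely when every Birkhoff tail ending at time $h$ has average at least $-\log\sigma$.

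Second, I would fix $\sigma_0\in(0,1)$ from the non-uniform expansion hypothesis and $\sigma\in(\sigma_0,1)$ sufficiently close to $1$. For every $x\in H(\sigma_0)$ the strict inequality $\limsup\frac{1}{n}\sum_{j=0}^{n-1}\log\|Df(f^j(x))^{-1}\|<\log\sigma_0$ implies the existence of $N(x)$ with $\frac{1}{n}\sum_{j=1}^n a_j(x)\geq -\log\sigma_0$ for every $n\geq N(x)$. I then apply the Pliss lemma with $c_1=-\log\sigma_0$, $c_2=-\log\sigma$ and upper bound $A$, obtaining for each such $n$ at least $\ell\geq\theta n$ indices $1\leq h_1<\cdots<h_\ell\leq n$ satisfying $\sum_{j=m+1}^{h_i}a_j(x)\geq c_2(h_i-m)$ for every $0\leq m<h_i$, where
\begin{equation*}
\theta\;:=\;\frac{c_1-c_2}{A-c_2}\;=\;\frac{\log(\sigma/\sigma_0)}{A+\log\sigma}.
\end{equation*}
By the additive reformulation, each $h_i$ is a $\sigma$-hyperbolic time of $x$. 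Letting $n\to\infty$ produces infinitely many such times and yields $\liminf_{N\to\infty}\#\{k:h_k\leq N\}/N\geq\theta$.

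The main subtlety is arranging a consistent choice of $\sigma$ so that the frequency $\theta$ is strictly positive: one needs $\sigma>\sigma_0$ (to secure $c_1>c_2$) \emph{and} $\sigma>e^{-A}$ (to secure positivity of the denominator). Because $H(\sigma_0)\subset H(\sigma')$ for every $\sigma_0<\sigma'<1$, non-uniform expansion persists after enlarging $\sigma_0$, so one may always pick $\sigma\in(\max\{\sigma_0,e^{-A}\},1)$ and obtain $\theta=\theta(\sigma)>0$. The Pliss lemma itself is purely combinatorial and can be invoked from \cite{ABV00}; no further analytic input is needed beyond the uniform bound on $\|Df^{-1}\|$ coming from compactness of $M$ and $C^1$-regularity of $f$.
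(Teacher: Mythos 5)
Your proposal is correct and coincides with the argument the paper defers to in \cite{ABV00}: reformulate the hyperbolic-time condition additively and invoke the Pliss lemma with $c_1=-\log\sigma_0$, $c_2=-\log\sigma$, using the uniform upper bound on $-\log\|Df^{-1}\|$ supplied by compactness and $C^1$-regularity. The only small remark is that the "subtlety" you flag about choosing $\sigma>e^{-A}$ is actually automatic: the very fact that $\Leb(H(\sigma_0))=1$ forces $\liminf_n\frac1n\sum_{j\le n}a_j(x)>-\log\sigma_0$ for a.e.\ $x$, and since $a_j\le A$ this already gives $-\log\sigma_0<A$, i.e.\ $\sigma_0>e^{-A}$; hence any $\sigma\in(\sigma_0,1)$ makes $A-c_2>A-c_1>0$ without further care.
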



The Lemma below shows that we can translate the density of
hyperbolic times into the Lebesgue measure of the set of
points which have a specific (large) hyperbolic time.

\begin{lemma}\label{lemma3.3}
  Let $B\subset M$, $\theta > 0$ and $g:M\rightarrow M$ be a
  local diffeomorphisms such that $g$ has density $>2\theta$
  of hyperbolic times for every $x\in B$. Then, given any
  probability measure $\nu$ on $B$ and any $n \geq 1$, there
  exists $h > n$ such that
  $\nu(\{x\in B : h \ \textrm{is a hyperbolic time of $g$
    for $x$}\})>\theta/2.$
\end{lemma}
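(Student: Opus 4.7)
The plan is to reformulate the statement in terms of indicator functions and then combine Fatou's lemma with a simple pigeonhole argument. For each $h \geq 1$ I would set $E_h := \{x \in B : h \text{ is a } \sigma\text{-hyperbolic time of } g \text{ for } x\}$ and $\chi_h := \mathbf{1}_{E_h}$. Since the defining condition \eqref{th} is a finite collection of closed inequalities involving continuous functions of $x$ (the norms of the inverses of $Df$ along the orbit), $E_h$ is Borel and $\chi_h$ is $\nu$-measurable. The hypothesis that $g$ has density $>2\theta$ of hyperbolic times at every point of $B$ then rewrites as
\[
\liminf_{N \to \infty} \frac{1}{N}\sum_{h=1}^{N} \chi_h(x) > 2\theta \qquad \text{for every } x \in B.
\]

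Next I would integrate against $\nu$. Setting $F_N := \frac{1}{N}\sum_{h=1}^{N} \chi_h$, each $F_N$ is non-negative and bounded by $1$, so Fatou's lemma yields
\[
\liminf_{N \to \infty} \frac{1}{N}\sum_{h=1}^{N} \nu(E_h)
= \liminf_{N \to \infty} \int_B F_N\, d\nu
\ge \int_B \liminf_{N \to \infty} F_N\, d\nu
\ge 2\theta,
\]
using that $\nu$ is a probability measure supported in $B$.

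Finally I would argue by contradiction. Fix $n \geq 1$ and suppose that $\nu(E_h) \le \theta/2$ for \emph{every} $h > n$. Splitting the Ces\`aro sum into the first $n$ terms (each at most $1$) and the remaining terms,
\[
\frac{1}{N}\sum_{h=1}^{N} \nu(E_h) \le \frac{n}{N} + \frac{N-n}{N}\cdot \frac{\theta}{2},
\]
so that $\liminf_{N\to\infty}\frac{1}{N}\sum_{h=1}^{N}\nu(E_h) \le \theta/2 < 2\theta$, contradicting the preceding display. Hence some $h > n$ must satisfy $\nu(E_h) > \theta/2$, as claimed.

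I do not expect any serious obstacle: the argument is entirely soft, a standard Fubini/pigeonhole manoeuvre. The only mild technical point is Borel measurability of the sets $E_h$, which is immediate because each hyperbolic time condition is a finite product inequality in the continuous function $x \mapsto \|Df(x)^{-1}\|$ along the orbit of $x$.
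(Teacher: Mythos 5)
Your argument is correct and complete. Note that the paper does not actually prove this lemma --- it simply defers to \cite[Lemma 3.3]{AP06} --- so there is no in-paper argument to compare against; your route (Fatou's lemma applied to the Ces\`aro averages $F_N=\frac1N\sum_{h=1}^N\chi_h$ of the indicators of the sets $E_h$, followed by a pigeonhole contradiction against the pointwise density lower bound) is precisely the standard argument used in the literature for statements of this kind, and every step checks out: each $E_h$ is Borel as a finite intersection of closed sublevel sets of continuous functions, Fatou applies since $0\le F_N\le 1$, and the tail estimate $\frac{n}{N}+\frac{N-n}{N}\cdot\frac{\theta}{2}\to\frac{\theta}{2}<2\theta$ furnishes the contradiction. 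One small notational slip: you wrote $Df$ where the map in the statement is $g$; it should read $Dg$.
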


\begin{proof}
See \cite[Lemma 3.3]{AP06}
\end{proof}

The next result is the flexible covering lemma with hyperbolic preballs which will enable us to approximate the Lebesgue measure of a given set through the measure of families of hyperbolic preballs.

\begin{lemma}\label{lemma3.5}
  Let a measurable set $A\subset M$, $n \geq1$ and
  $\varepsilon > 0$ be given with $\Leb(A) > 0$. Let
  $\theta> 0$ be a lower bound for the density of hyperbolic
  times for Lebesgue almost every point. Then there are
  integers $n<h_1<\ldots < h_k$ for
  $k = k(\varepsilon)\geq 1$ and families $\SE_i$ of subsets
  of $M$, $i= 1,\ldots k$ such that
\begin{enumerate}
\item $\SE_1\cup\ldots\cup\SE_k$ is a finite pairwise
  disjoint family of subsets of $M$;
\item $h_i$ is a $\frac{\sigma}{2}$-hyperbolic time for
  every point in $Q$, for every element $Q\in\SE_i$,
  $i=1,\ldots, k$;
\item every $Q\in\SE_i$ is the preimage of some element
  $P\in\SP$ under an inverse branch of $f^{h_i}$,
  $i=1,\ldots, k$;
\item there is an open set $U_1\supset A$ containing the
  elements of $\SE_1\cup\ldots\SE_k$ with
  $\Leb(U_1\backslash A) < \varepsilon$;
  \item $\Leb(A\triangle\cup_i\SE_i)<\varepsilon$.
\end{enumerate}
\end{lemma}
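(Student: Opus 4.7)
The proof will be an iterative peeling argument. At each step $i$ I will apply Lemma~\ref{lemma3.3} to the portion of $A$ not yet covered, obtain a new hyperbolic time $h_i$, and select a finite disjoint family $\SE_i$ of preimages of partition atoms under inverse branches of $f^{h_i}$ that cover a definite fraction of the uncovered portion.

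\textbf{Preparation.} Using Lemma~\ref{lemma: Initial partition} I fix a Borel partition $\SP$ of $M$ with $\diam\SP<\delta_1$ (the constant in Proposition~\ref{prop3.1}) and $\Leb(\partial\SP)=0$. By outer and inner regularity of $\Leb$, choose an open $U_1\supset A$ with $m(U_1\setminus A)<\varepsilon/2$ and a compact $K\subset A$ with $m(A\setminus K)<\varepsilon/4$, and set $d_0:=\dist(K,M\setminus U_1)>0$. Proposition~\ref{prop3.1} gives $\diam\, B(x,h,\delta_1)\le 2\sigma^{h/2}\delta_1$, so I pick $N>n$ large enough that $2\sigma^{N/2}\delta_1<d_0/2$; then for every $\sigma$-hyperbolic time $h\ge N$ and every $x\in K$, the preball $Q_x^{h}:=f_x^{-h}(P)$, where $P\in\SP$ is the atom containing $f^h(x)$, is entirely contained in $U_1$. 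A further enlargement of $N$ arranges, via the contraction along the orbit in Proposition~\ref{prop3.1}, that every point of $Q_x^{h}$ has $h$ as a $\sigma/2$-hyperbolic time (the loss from $\sigma$ to $\sigma/2$ absorbs the uniformly small perturbation of the partial products along the contracting dynamical ball).

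\textbf{Iteration.} Set $K_0:=K$. At step $i\ge1$, if $\Leb(K_{i-1})\le\varepsilon/4$ stop and put $k:=i-1$. Otherwise apply Lemma~\ref{lemma3.3} to the probability $\Leb|_{K_{i-1}}/\Leb(K_{i-1})$ with threshold $\max\{N,h_{i-1}+1,M_i\}$, where $M_i$ is to be specified, to obtain $h_i$ and a subset $H_i\subset K_{i-1}$ with $\Leb(H_i)>(\theta/2)\Leb(K_{i-1})$ of points for which $h_i$ is a $\sigma$-hyperbolic time. The collection $\{Q_x^{h_i}:x\in H_i\}$ has only finitely many distinct members (finite $\SP$ and finitely many inverse branches of $f^{h_i}$), and two distinct members are automatically disjoint (preimages of disjoint atoms, or images of distinct inverse branches). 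Define $\SE_i$ as the subfamily of those $Q_x^{h_i}$ disjoint from $\bigcup_{j<i}\SE_j$, and set $K_i:=K_{i-1}\setminus\bigcup\SE_i$; items (2) and (3) then hold by construction.

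\textbf{Main obstacle and closing.} The delicate point is to ensure that the cross-step disjointness required in item (1) does not cost too much measure. I would handle this by choosing $M_i$ so large that, on a compact $H_i'\subset H_i$ with $\Leb(H_i\setminus H_i')<(\theta/4)\Leb(K_{i-1})$ (available by inner regularity), the diameter $2\sigma^{h_i/2}\delta_1$ of the preballs at time $h_i$ is strictly smaller than $\dist\bigl(H_i',\,\overline{\bigcup_{j<i}\SE_j}\bigr)>0$; then every $Q_x^{h_i}$ with $x\in H_i'$ is automatically disjoint from $\bigcup_{j<i}\SE_j$, so $\Leb(\bigcup\SE_i)\ge\Leb(H_i')>(\theta/4)\Leb(K_{i-1})$. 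Consequently $\Leb(K_i)\le(1-\theta/4)\Leb(K_{i-1})$ and the iteration halts in $k=\lceil\log_{1-\theta/4}(\varepsilon/(4\Leb(K_0)))\rceil$ steps. Item (4) follows since every $Q\in\bigcup_i\SE_i$ lies in $U_1$ and $m(U_1\setminus A)<\varepsilon/2$; item (5) is then
\[
m\bigl(A\triangle\textstyle\bigcup_i\SE_i\bigr)\le m(A\setminus K)+\Leb(K_k)+m(U_1\setminus A)<\tfrac{\varepsilon}{4}+\tfrac{\varepsilon}{4}+\tfrac{\varepsilon}{2}=\varepsilon.
\]
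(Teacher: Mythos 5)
Your overall strategy — an iterative peeling argument driven by Lemma~\ref{lemma3.3}, combined with inner/outer regularity of Lebesgue measure to trap the construction inside an open neighborhood of $A$ — is exactly the one used in the cited reference, so the bones of the proof are right.  But there are two genuine gaps in the cross-step disjointness argument that makes item~(1) work.  First, the order of quantifiers is circular: you want to choose $M_i$ so that the preballs at time $h_i>M_i$ have diameter smaller than $\dist(H_i',\overline{\bigcup_{j<i}\SE_j})$, but $H_i'$ is a subset of $H_i$, which only exists after Lemma~\ref{lemma3.3} has been applied with threshold $M_i$.  The fix is to pick a compact $H_i''\subset K_{i-1}$ with small complement \emph{before} invoking Lemma~\ref{lemma3.3}, choose $M_i$ against $\dist(H_i'',\cdot)$, and only then intersect the resulting $H_i$ with $H_i''$; as written the argument does not close.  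Second, the positivity $\dist(H_i',\overline{\bigcup_{j<i}\SE_j})>0$ is asserted but not justified.  The preballs $Q_x^{h_j}$ are preimages of partition atoms, which by the construction in Lemma~\ref{lemma: Initial partition} are neither open nor closed; hence $K_{i-1}=K\setminus\bigcup_{j<i}\bigcup\SE_j$ is disjoint from $\bigcup\SE_j$ but not automatically from $\overline{\bigcup\SE_j}$, and a compact $H_i'\subset K_{i-1}$ can touch that closure.  One needs to exploit that $\Leb(\partial\SP)=0$ (so $\overline{\bigcup\SE_j}$ differs from $\bigcup\SE_j$ by a null set) and shrink to a compact subset of $K_{i-1}\setminus\overline{\bigcup_{j<i}\SE_j}$ at each stage, which you do not do.

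A smaller but real inaccuracy: the claim that ``a further enlargement of $N$'' makes $h$ a hyperbolic time (with the weaker constant) for every point of the preball is misattributed.  That property is not a consequence of taking $h$ large; it comes from uniform continuity of $x\mapsto\log\|Df(x)^{-1}\|$ on $M$ together with the fact that $Q_x^h\subset B(x,h,\delta_1)$, where the intermediate distances $d(f^j(x),f^j(y))$ are all $\le\sigma^{(h-j)/2}\diam\SP\le\diam\SP$.  It is the smallness of $\delta_1$ (equivalently of $\diam\SP$), not the size of $N$, that controls the distortion of the product $\prod_{j=h-k}^{h-1}\|Df(f^j(\cdot))^{-1}\|$ over $Q_x^h$; the conclusion one can actually extract this way is a $\sqrt{\sigma}$-hyperbolic time, which is the weakening being used (the constant in the statement is inherited from the cited source).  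The measure bookkeeping in the final display, and the derivation of items (4) and (5) from the containment of all preballs in $U_1$, are correct.
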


\begin{proof}
See \cite[Lemma 3.5]{AP06}.
\end{proof}

\begin{remark}\label{rmkX}
  The statement of Lemma~\ref{lemma3.5} remains valid, with
  the same proof, replacing $f$ by an open distance
  expanding and topologically transitive map $T$ of a
  compact metric space $X$; and $\Leb$ by a $\phi$-conformal
  measure $\nu$ for a continuous potential
  $\phi:M\rightarrow \RR$; recall Remark \ref{rmk0}.
\end{remark}

We use this covering lemma to prove the following. Fix a
reference probability measure $\nu$ for the space $M$. We
recall that a $f$-invariant probability measure $\mu$ is
$\nu$-weak-SRB-like if
\begin{align*}
\limsup\limits_{n\rightarrow+\infty}\frac{1}{n}\log\nu(A_{\varepsilon,n}(\mu))=0, \ \forall \varepsilon>0,
\end{align*}
where $A_{\varepsilon,n}(\mu)$ was defined at
\eqref{eq3}. We denote by $\SW_f^*(\nu)$ the set of
$\nu$-weak-SRB-like probability measures. When $\nu=\Leb$ we
denote $\SW_f^*(\Leb)$ by $\SW_f^*$. 


\begin{proposition}\label{teorema:as-medidas-SRB-like-tem-pressao-nao-negativa}
  Let $f:M \rightarrow M$ be a non-uniformly expanding
  map. For each $\mu\in\SW_f^*$ we have
  $h_{\mu}(f)+\int\psi d\mu\geq0$\footnote{Recall that
    $\psi=-\log Jf=-\log|\det Df|$.}.
\end{proposition}

\begin{proof}
  Given $\mu\in\SW_f^*$, let $\delta_1>0$ be as in
  Proposition~\ref{prop3.1} and fix $\tau>0$. Since
  $f:M \to M$ is a $C^1$ local diffeomorphism, then $f$ is a
  regular map. Let us take a partition $\SP$ of $M$ as given
  in Lemma~\ref{lemma: Initial partition} with $\nu=\mu$, so
  that $\diam(\SP)<\xi<\frac{\delta_1}{4}$ so that
  $|\psi(x)-\psi(y)|<\tau/2$ if $d(x,y)<\xi$ and also
  $\mu(\partial\SP)=0$.  Since $\mu$ is $f$-invariant, then
  the function $\lambda\mapsto h(\SP,\lambda)$ is upper
  semicontinuous at $\mu$, that is, for each small enough
  $\tau>0$ we can find $\delta_2>0$ such that $0<\delta_2<\tau$ and
\begin{eqnarray}\label{desigualdade-da-semi-cont-superior-de
    h(f,P)}
  \textrm{if} \ \ \dist(\mu, \tilde{\mu})\leq\delta_2,
  \ \ \textrm{then} \ \  h(\SP,\tilde{\mu})\leq h(\SP,\mu)+\tau.
\end{eqnarray}
Since $\mu$ is a weak-SRB-like probability measure, for any
given $0<\varepsilon<\delta_2/3$ there exists a subsequence
of integers $n_l\rightarrow+\infty$ such that
$\Leb(A_{\varepsilon,n_l}(\mu))>0$ for all $l>0$.  Let us
take $\epsilon=\delta_2/3>0$ and set $\gamma_0=\delta>0$
given by Lemma~\ref{lm1} and also $0<\gamma_1<\xi$ satisfying
$|\psi(x)-\psi(y)|<\delta_2/3$ if $d(x,y)<\gamma_1$. We
denote $\gamma=\min\{\gamma_0,\gamma_1\}\le\xi$.

Let us choose one interior point having density $\geq\theta$
of $\sigma$-hyperbolic times of $f$ in each atom
$P\in \mathcal{P}$ and form the set $W_0=\{w_1,\ldots,w_S\}$
where $S=\#\SP$.

We now use Lemma~\ref{lemma3.5} to obtain a covering of
$A_{\varepsilon,n_l}(\mu)$ by hyperbolic preballs. We take
positive integers $l,m$ and
$\beta_l= \frac{1}{n_l} \Leb(A_{\varepsilon,n_l}(\mu))>0$
such that $\sigma^{\frac{m}{2}}\delta_1/4<\gamma$. Then
there are integers $n_l<n_{l}+m\leq h_1<h_2<\ldots<h_{k} $
with $k=k\left(l\right)\geq1$ (here $\beta_l$ takes the
place of $\varepsilon$ in Lemma \ref{lemma3.5}) and families
$\SE_j$ of subsets of $M$, $j= 1,\ldots k$ so that
\begin{eqnarray}\label{desig12}
  \Leb(A_{\varepsilon,n_l}(\mu))&=& \sum\limits_{j=1}^{k}\Leb(A_{\varepsilon,n_l}(\mu)\cap \SE_{j})+\sum\limits_{j=1}^{k}\Leb(A_{\varepsilon,n_l}(\mu)\backslash \SE_{j})\nonumber\\
                                &\leq& \sum\limits_{j=1}^{h_{k}}\sum_{Q\in\SE_{j}}\Leb(A_{\varepsilon,n_l}(\mu)\cap Q)+\beta_l, \ \ \textrm{hence}\nonumber\\
  \Leb(A_{\varepsilon,n_l}(\mu))&\leq& \frac{n_l}{n_l-1}\sum\limits_{j=1}^{h_{k}}\sum_{Q\in\SE_{j}}\Leb(A_{\varepsilon,n_l}(\mu)\cap Q),
\end{eqnarray}
where $\SE_{j}=\SE_{h_j}$ and $A_{\varepsilon,n_l}(\mu)\cap \SE_{j}=\bigcup_{Q\in \SE_{j}}A_{\varepsilon,n_l}(\mu)\cap Q$.
\begin{figure}[htpb]
  \centering
  \includegraphics[scale=0.35]{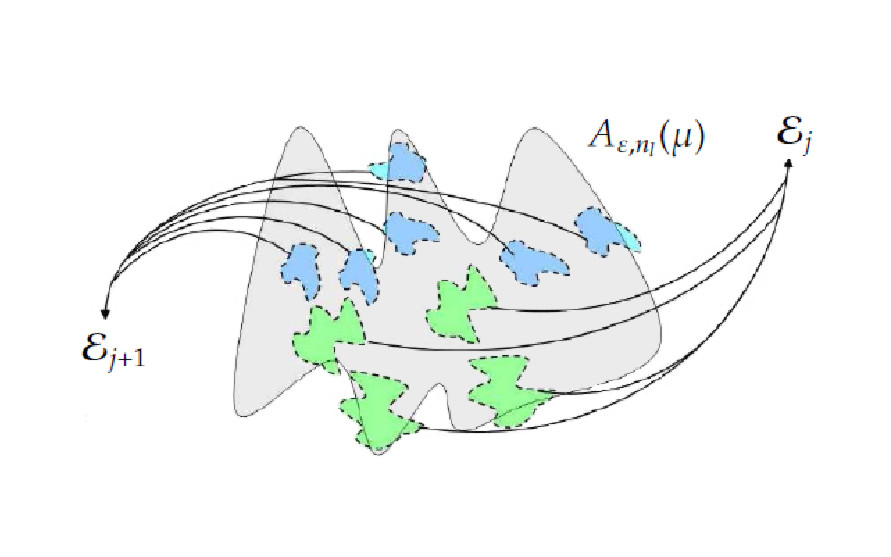}
  \caption{A sketch of $\SE_j$, the family of all sets
    $f^{-h_j}(P)$ which intersect $A_{\varepsilon,n_l}(\mu)$
    in points for which $h_j$ is a hyperbolic time, where
    $P\in\SP$. Analogously for $\SE_{j+1}$.}
  \label{conj cni}
\end{figure}
Since $\diam(\SP)<\frac{\delta_1}{4}$, by Lemma
\ref{lemma3.5}, $f^{h_j}|_{Q}:Q\rightarrow f^{h_j}(Q)$ is a
diffeomorphism for all $Q\in\SE_{j}$ and $1\leq j\leq k$,
where $h_j$ is a $\frac{\sigma}{2}$-hyperbolic time for
every point in $Q$.  We recall that $Q\in\SE_{j}$ is the
preimage of some element $P\in\SP$ under an inverse branch $\phi$
of $f^{h_j}$, $j=1,\ldots, k$.  Then
\begin{eqnarray*}
  \Leb(Q\cap A_{\varepsilon,n_l}(\mu))
  =\int_{f^{h_j}(Q\cap
  A_{\varepsilon,n_l}(\mu))}\!\!\!\!\!\!
  |\det Df^{-h_j}|\, d\Leb
  =
  \int_{f^{h_j}(Q\cap
  A_{\varepsilon,n_l}(\mu))}\!\!\!\!\!\!e^{S_{h_j}\psi}\circ\phi
  \,d\Leb.
\end{eqnarray*}
Note that
$S_{h_j}\psi(y)=S_{n_l}\psi(y)+S_{h_j-n_l}\psi(f^{n_l}(y))$
and, since $h_j\geq n_{l}+m$ is a hyperbolic time for all
$y\in \SE_j$, we have
$S_{h_j-n_l}\psi(f^{n_l}(y))=-\sum_{i=0}^{h_j-n_l-1}\log|\det
Df(f^{i+n_l}(y))|\leq0$ and so
$S_{h_j}\psi(y)\leq S_{n_l}\psi(y)$ for all
$y\in\SE_j$. Therefore,
\begin{eqnarray*}
  \Leb(Q\cap A_{\varepsilon,n_l}(\mu))
  \leq
  \int_{f^{h_j}(Q\cap
  A_{\varepsilon,n_l}(\mu))}\!\!\!\!\!\!\!e^{S_{n_l}\psi}\circ\phi
  \,d\Leb, \quad \forall Q\in\SE_j, \forall j=1,\ldots,k.
\end{eqnarray*}
Let us take $y_Q\in Q\cap A_{\varepsilon,n_l}(\mu)$ for each
$Q\in\SE_{j}$ such that
$\Leb(Q\cap A_{\varepsilon,n_l}(\mu))>0$, and let $x_Q\in Q$
be such that $f^{h_j}(x_Q)\in W_0$ (recall that elements of
$W_0$ are interior points of each atom of the partition
$\SP$). We write $W_{l}$ for the set of all points $x_Q$ for
all $Q\in\SE_j$ such that
$\Leb(Q\cap A_{\varepsilon,n_l}(\mu))>0$ for all
$j=1,\ldots,k$.

For each atom $P\in\SP^{n_l}$ with $P\cap W_l\neq\emptyset$ we
choose only one point of $P\cap W_l$ to form the subset
$\hat W_l$. We note that for any pair
$x_Q,x_Q'\in P\cap W_l$ we have
$d(f^i(x_Q),f^i(x_Q'))<\xi, i=0,\dots,n_l-1$ so
$Q\cup Q'\subset B(x_Q,n_l,\xi+\xi)$ and so from
\eqref{desig12} we obtain
\begin{align*}
  \Leb(A_{\varepsilon,n_l}(\mu))
  \leq
  \frac{n_l}{n_l-1}
  \sum_{x\in\hat W_l}\Leb(A_{\varepsilon,n_l}(\mu)\cap B(x,n_l,2\xi))
\end{align*}
and also by the choice of the initial partition and of $\xi$
\begin{align*}
  \Leb(B(x,n_l,2\xi))
  \le
  \int_{B(f^{n_l}(x),2\xi)}|\det Df^{-n_l}|\,d\Leb
  \le
  e^{S_{n_l}(\psi(x)+\tau/2)}\Leb(B(f^{n_l}(x),2\xi)).
\end{align*}
So we can write
$\Leb(A_{\varepsilon,n_l}(\mu)) \leq
\frac{n_l}{n_l-1}e^{\tau n_l/2}\sum_{x\in
\hat W_{n_l}}e^{S_{n_l}\psi(x)}$.  Setting
$L(n_l):=\sum_{x\in \hat W_{n_l}}e^{S_{n_l}\psi(x)}$ and
$\lambda_x:=\frac{1}{L(n_l)}e^{S_{n_l}\psi(x)}$ we can
rewrite
\begin{eqnarray}\label{equa12}
  \Leb(A_{\varepsilon,n_l}(\mu))
  &\leq&
         \frac{n_l}{n_l-1}
         \exp\left[
         n_l\left(\frac{\tau}2+\frac{1}{n_l}\log
         L(n_l)
         \right)\right].
\end{eqnarray}
Note that since $\sum_{x\in W_{l}}\lambda_x=1$, then
$\log L(n_l)=\sum_{x\in
  W_{l}}\lambda_x\big(S_{n_l}\psi(x)-\log\lambda_x\big)$ by
Lemma~\ref{lemma10.4.4}.

Defining the probability measures
$\nu_{n_l}:=\sum_{x\in W_{n_l}}\lambda_x\delta_{x}$ and
$\mu_{n_l}:=\frac{1}{n_l}\sum_{i=0}^{n_l-1}(f^i)_*(\nu_{n_l})=\sum_{x\in
  W_{n_l}}\lambda_x\sigma_{n_l}(x)$, we rewrite
again\footnote{Since the atoms of $\SP$ contain at most one
  element from $\hat W_l$.}
\begin{eqnarray}\label{eq-do-ln2}
\log L(n_l) = n_l\int\psi d\mu_{n_l}+H(\SP^{n_l},\nu_{n_l}).
\end{eqnarray}
Now we take a subsequence $n_{l_i}\nearrow\infty$ such that
$\mu_{n_{l_i}}\to\tilde{\mu}$ in the weak$^*$ topology and
also
\begin{eqnarray}\label{eq17N}
\lim_{i\to+\infty}\frac{1}{n_{l_i}}\log
  \Leb(A_{\epsilon,n_{l_i}}(\mu))
  =
  \limsup_{n\to+\infty}\frac{1}{n}\log \Leb(A_{\epsilon,n}(\mu)).
\end{eqnarray}
We keep the notation $n_l$ for simplicity in what follows.

From Proposition \ref{prop3.1}, we know that
\begin{align*}
  \max\{\diam(f^l(Q)); \ Q\in\SE_j, \ l=0,\ldots n_l\}
  <
  \sigma^{\frac{1}{2}(h_j-n_l)}\delta_1/4
  <
  \sigma^{\frac{m}{2}}\delta_1/4<\gamma\le\xi,
\end{align*}
for all $j=1,\ldots, k$. By uniform continuity of $\psi$ we
get $|\psi(f^l(x_Q))-\psi(f^l(y))|<\delta_2/3$ for all
$y\in Q$ and $l=0,\ldots, n_l-1$.
We observe that, for each $x\in \hat W_{n_l}$ there exists
$y_Q\in Q\cap A_{\varepsilon,n_l}(\mu)$ such that
$x,y_Q\in Q$. Hence $d(f^i(x),f^i(y_Q))<\gamma$ for all
$i=0,\ldots, n_l-1$.  We then have
$\dist(\sigma_{n_l}(x),\sigma_{n_l}(y_Q))<\delta_2/3$ by
Lemma~\ref{lm1} and  by the triangular inequality
$$\dist(\sigma_{n_l}(x),\mu)\leq\dist(\sigma_{n_l}(x),\sigma_{n_l}(y_Q))+\dist(\sigma_{n_l}(y_Q),\mu)<\frac{\delta_2}{3}+\varepsilon<\delta_2,$$
because $y_Q\in A_{\varepsilon,n_l}(\mu)$ and
$\varepsilon<\delta_2/3$. Thus, for any
$\varphi\in C^0(M,\RR)$,
\begin{eqnarray*}
  \left|\int\varphi d\mu_{n_l}- \int\varphi d\mu \right|
  &\leq&
         \sum\limits_{x\in W_{n_l}}\lambda_x
         \left|\int\varphi\,d\sigma_{n_l}(x)
         -\int\varphi d\mu \right|<\delta_2.
\end{eqnarray*}
Hence we have $\dist(\mu,\tilde{\mu})\leq\delta_2$ and consequently
\begin{eqnarray}\label{equa123}
  \int\psi d \mu_{n_l}\leq \int\psi d\mu+\delta_2
  \qand
  \int\psi d \tilde{\mu}\leq \int\psi d\mu+\delta_2.
\end{eqnarray}
We now use again item (2) of Lemma~\ref{lemma: Initial
  partition} with $C_0=W_0$, $\nu=\tilde\mu$,
$\nu_k=\mu_{k+1}, k\ge1$ and $\mu_1=\mu$. We obtain a small
perturbation $\tilde{\SP}$ of $\SP$ so that
$x \in \hat W_{l}$ still belongs to the same atom of the
$h_j-$th refinement of $\tilde{\SP}$ which intersects
$A_{\varepsilon,n_l}(\mu)$, for $Q\in \SE_j; j=1,\ldots, k$
and $l\geq1$.  More precisely, $\tilde{\SP}$ satisfies,
besides item (2) of Lemma~\ref{lemma: Initial partition},
the useful property: $x_Q\in \tilde{Q}\cap Q$ for
$Q\in\SE_{j}$ where $\tilde{Q}\in \tilde{\SE}_{j}$ and
$ f^{h_j}(x_Q)=w\in \tilde{\SP}(w)\cap \SP(w)$.

Now we observe that
$H(\tilde{\SP}^{n_l},\nu_{n_l})=H(\SP^{n_l},\nu_{n_l})$ by
definition of $\nu_{n_l}$ and by construction of the
$\tilde{\SP}$ as a perturbation of $\SP$. Following the
proof of Lemma \ref{lm2a} (see inequality
(\ref{desigualdade-das-entropias})) there exists $l_0\geq 0$
such that
\begin{eqnarray*}
  \frac{1}{n_l}H(\SP^{n_l},\nu_{n_l})
  =
  \frac{1}{n_l}H(\tilde{\SP}^{n_l},\nu_{n_l})
  \leq
  h(\tilde{\SP},\tilde{\mu})+\frac{\delta_2}{4}, \quad \forall l\geq l_0.
\end{eqnarray*}
We have
$H_{\tilde{\mu}}(\tilde{\SP}/\SP)<\frac{\delta_2}{4}$ by
Lemma \ref{lemma9.1.6} and item (2e) from Lemma~\ref{lemma:
  Initial partition}. Then, because
$h(\tilde{\SP},\tilde{\mu})\leq
h(\SP,\tilde{\mu})+H_{\tilde{\mu}}(\tilde{\SP}/\SP)$, we get
\begin{eqnarray*}
  \frac{1}{n_l}H(\SP^{n_l},\nu_{n_l})\leq h(\SP,\tilde{\mu})+\frac{\delta_2}{4}\leq h(\SP,\mu)+\frac{\delta_2}{4}+\tau, \ \ \forall l\geq l_0,
\end{eqnarray*}
where the last inequality follows by the choice of
$\delta_2$ in (\ref{desigualdade-da-semi-cont-superior-de
  h(f,P)}). Thus,
\begin{eqnarray}\label{equa441}
\frac{1}{n_l}H(\SP^{n_l},\nu_{n_l})\leq h(\SP,\mu)+\frac{\delta_2}{4}+\tau\leq h_{\mu}(f)+\frac{5}{4}\tau, \ \ \forall l\geq l_0.
\end{eqnarray}
and we recall that $\delta_2<\tau$.  Combining assertions
(\ref{equa12}), (\ref{eq-do-ln2}), (\ref{equa441}) and
(\ref{equa123}) we arrive at
\begin{align*}
  \Leb(A_{\varepsilon,n_l}(\mu))
  &\leq
    \frac{n_l}{n_l-1}\exp{\left[n_l\left(\frac{\tau}{2}+\frac{1}{n_l}\log
    L(n_l)\right)\right]}
  \\
  &\leq
    \frac{n_l}{n_l-1}\exp{\left[n_l\left(\frac{\tau}{2}+\int\psi\,
    d\mu_{n_l}+\frac{1}{n_l}H(\SP^{n_l},\nu_{n_l})\right)\right]}
  \\
  &\leq
    \frac{n_l}{n_l-1}\exp{\left[n_l\left(3\tau+h_{\mu}(f)+\int\psi d\mu\right)\right]}.
\end{align*}
Hence we obtain
\begin{eqnarray*}
\frac{1}{n_l}\log \Leb(A_{\varepsilon,n_l}(\mu))
&\leq& \frac{1}{n_l}\log\left(\frac{n_l}{n_l-1}\right)+h_{\mu}(f)+\int\psi d\mu+3\tau, \ \ \forall l\geq l_0.
\end{eqnarray*}
Since $\mu\in\SW_f^*$, we conclude
$$0=\limsup\limits_{n\rightarrow+\infty}\frac{1}{n}\log \Leb(A_{\varepsilon,n}(\mu))=\lim\limits_{l\rightarrow+\infty}\frac{1}{n_l}\log\Leb(A_{\varepsilon,n_l}(\mu))\leq h_{\mu}(f)+\int\psi d\mu+3\tau.$$

As $\tau > 0$ is arbitrary, the proof of the proposition is
complete.
\end{proof}

\begin{remark}\label{rmkY}
  The statement of
  Proposition~\ref{teorema:as-medidas-SRB-like-tem-pressao-nao-negativa}
  is still valid if: (i) we replace $f$ by an open distance
  expanding and topologically transitive map $T$ of a
  compact metric space $X$; and (ii) $\Leb$ by a
  $\phi$-conformal measure $\nu$ with
  $\mathcal{L}_{\phi}^*(\nu)=\lambda\nu$, for some
  $\lambda>0$ and a continuous potential
  $\phi:X\rightarrow\RR$. Indeed, the proof uses that $\Leb$
  is $\psi$-conformal together with a covering lemma that
  clearly holds for distance expanding maps (recall
  Remark~\ref{rmkX}). Thus, we obtain
  $h_{\mu}(f)+\int\psi d\mu-\log\lambda\geq0, \ \textrm{for
    all} \ \mu\in\SW_T^*(\nu).$ Together with
  Lemma~\ref{lema1}, this shows that
  $P_{\textrm{top}}(T,\phi)=\log\lambda$ and so all
  $\nu$-weak-SRB-like probability measures are
  $\phi$-equilibrium states.
\end{remark}

\subsection{Proof of Theorem \ref{mthm:formula de pesin para
    difeo-local com HT}}
\label{sec:proof-theorem-refmth}

To prove Theorem \ref{mthm:formula de pesin para difeo-local
  com HT}, consider an expanding weak-SRB-like measure
$\mu$. Then there exists $\sigma\in(0,1)$ such that
$\limsup_{n\to\infty}\frac{1}{n}\sum_{i=0}^{n-1}\log\|Df(f^j(x))^{-1}\|\leq\log\sigma<0$
for $\mu$-a.e $x\in M$.

Thus, the Lyapunov exponents are non-negative. Hence the sum
$\Sigma^+(x)$ of the positive Lyapunov exponents of a
$\mu-$generic point $x$, counting multiplicities, is such
that
$\Sigma^+(x)=\lim_{n\rightarrow+\infty}\frac{1}{n}\log|\det
Df^n(x)|$ (by the Multiplicative Ergodic Theorem) and
$\int \Sigma^+ d\mu=\int\log|\det Df| d\mu=-\int\psi d\mu$
by the standard Ergodic Theorem.

For $C^1$-systems, Ruelle's Inequality \cite{Ru78} states
that for any $f$-invariant probability measure $\mu$ on the
Borel $\sigma$-algebra of $M$, the corresponding
measure-theoretic entropy $h_{\mu}(f)$ satisfies
$h_{\mu}(f)\leq\int\Sigma^+ d\mu$ and consequently
$h_{\mu}(f)+\int\psi d\mu\leq0$.

By definition, Pesin's Entropy Formula holds if the latter
difference is equal to zero. Since $\mu\in\SW_f^*$, by
Proposition
\ref{teorema:as-medidas-SRB-like-tem-pressao-nao-negativa},
we have that $h_{\mu}(f)+\int\psi d\mu=0$ which proves the
first statement of Theorem \ref{mthm:formula de pesin para
  difeo-local com HT}.

The next result completes the proof of
Theorem~\ref{mthm:formula de pesin para difeo-local com HT}.

\begin{proposition}\label{corolario:SRB-like-e-expansora}
  Let $f:M \rightarrow M$ be non-uniformly expanding. Then
  all the ergodic $SRB$-like probability measures are
  expanding probability measures.
\end{proposition}

\begin{proof}
  The assumptions on $f$ ensure that there exists
  $\sigma\in(0,1)$ such that $\Leb(H(\sigma))=1$.  The proof
  uses the following.

  \begin{lemma}\label{lema-simples1}
    If $f:M\rightarrow M$ is a $C^1$ local diffeomorphism
    such that $\Leb(H(\sigma))=1$ for some $\sigma\in(0,1)$,
    then each $\mu\in\SW_f$ satisfies
    $\int\log\|(Df)^{-1}\|d\mu<\log\sqrt{\sigma}$.
\end{lemma}

\begin{proof}
  Fix $0<\varepsilon<-\frac{1}{2}\log\sigma$ small
  enough. Since that $\varphi(x):=\log\|Df(x)^{-1}\|$ is a
  continuous potential, from the definition of the weak$^*$
  topology in space $\SM_1$ of probability measures, we
  deduce that there exists $0<\varepsilon_1<\varepsilon$
  such that if $\dist(\mu,\nu)<\varepsilon_1$ then
  $|\int\varphi d\mu-\int\varphi d\nu|<\varepsilon$ for all
  $\mu,\nu\in\SM_1$.

  Let us take $\mu\in\SW_f$, then
  $\Leb(A_{\varepsilon_1}(\mu))>0$. Let
  $x\in A_{\varepsilon_1}(\mu)\cap H(\sigma)$ and consider
  $\nu_x\in p\omega(x)$ such that
  $\dist(\mu,\nu_x)<\varepsilon_1$. Then
$$\left|\int \log\|(Df)^{-1}\|d\mu-\int\log\|(Df)^{-1}\|d\nu_x\right|<\varepsilon,$$
and therefore there exists $n_k\nearrow\infty$ so that
$\sigma_{n_k}(x)\xrightarrow[]{w^*}\nu_x$. Thus
\begin{align*}
  \int \log\|(Df)^{-1}\|d\mu
  &\leq
    \int\log\|(Df)^{-1}\|d\nu_x +\varepsilon
    =
    \lim\limits_{k\rightarrow+\infty}\frac{1}{n_k}\sum\limits_{j=0}^{n_k-1}\log\|Df(f^j(x))^{-1}\|+\varepsilon
  \\
  &\leq
    \limsup\limits_{n\rightarrow+\infty}\frac{1}{n}\sum\limits_{j=0}^{n-1}\log\|Df(f^j(x))^{-1}\|+\varepsilon
    < \log\sigma+\varepsilon<\log\sqrt{\sigma}
\end{align*}
as stated.
\end{proof}

Going back to the proof of the proposition, since $\mu$ is
$f$-invariant and ergodic, then by the previous lemma and by
the standard Ergodic Theorem
$$\lim\limits_{n\rightarrow+\infty}\frac{1}{n}\sum\limits_{j=0}^{n-1}\log\|Df(f^j(y))^{-1}\|=\int \log\|(Df)^{-1}\| d\mu<\log\sqrt{\sigma} \ \ \textrm{for} \ \mu-\textrm{a.e} \ y\in M.$$
Therefore $\mu$ is an expanding measure. This finishes the
proof of the proposition.
\end{proof}
This completes the proof of Theorem~\ref{mthm:formula de
  pesin para difeo-local com HT}.


\section{Ergodic weak-SRB-like measure}
\label{sec:Ergodic SRB-like measure}

In this section, we prove Corollary
\ref{mthm:pressao-top-zero-implica-existencia-de-medida-ergodica}
on the existence of ergodic weak-SRB-like measures for
non-uniformly expanding local diffeomorphisms
$f:M\to M$.

\subsection{Ergodic expanding invariant measures}

\begin{theorem}\label{teo*}
  Let $f:M\rightarrow M$ be a $C^1$ local diffeomorphism. If
  $\mu$ is an ergodic expanding $f$-invariant probability
  measure, then
\begin{eqnarray}\label{desigualdade 1}
\lim\limits_{\varepsilon\rightarrow 0^+}\limsup\limits_{n\rightarrow+\infty}\frac{\log(\Leb(A_{\varepsilon,n}(\mu)))}{n}\geq \int\psi d\mu+h_{\mu}(f).
\end{eqnarray}
\end{theorem}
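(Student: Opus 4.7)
The plan is to prove a volume-lemma style lower bound by combining the dynamical structure at hyperbolic times (Proposition~\ref{prop3.1}) with Birkhoff's theorem, the Brin--Katok local entropy formula, the weak$^*$ convergence of empirical measures, and the covering Lemma~\ref{lemma3.3}. First I would note that since $\mu$ is ergodic and expanding, Birkhoff's theorem and the definition of $H(\sigma)$ give $\int\log\|Df^{-1}\|\,d\mu\leq\log\sigma<0$, so a Pliss-type argument (as in Theorem~\ref{prop 2.12}, with $\mu$ replacing $\Leb$) produces $\sigma'\in(\sigma,1)$ and $\theta>0$ such that $\mu$-a.e.\ $x$ has asymptotic density $>2\theta$ of $\sigma'$-hyperbolic times.

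Fix $\varepsilon,\eta>0$ and pick $\delta\in(0,\delta_1)$ small enough that Lemma~\ref{lm1} forces $\dist(\sigma_n(x),\sigma_n(y))<\varepsilon/2$ whenever $d(f^j x,f^j y)<\delta$ for all $0\leq j<n$. Combining ergodicity of $\mu$ with Birkhoff, the Brin--Katok local entropy formula, and $\sigma_n(x)\to\mu$ a.e., an Egorov-type argument lets me choose a Borel set $B\subset M$ with $\mu(B)>1-\eta$ and $N_0\in\NN$ such that every $x\in B$ has density $>2\theta$ of $\sigma'$-hyperbolic times and, for all $n\geq N_0$ and $x\in B$,
$$\dist(\sigma_n(x),\mu)<\tfrac{\varepsilon}{2},\qquad \tfrac{1}{n}S_n\psi(x)>\int\psi\,d\mu-\eta,\qquad \mu(B(x,n,2\delta))\leq e^{-n(h_\mu(f)-\eta)}.$$
Lemma~\ref{lemma3.3} applied to $\mu|_B/\mu(B)$ produces, for each $n\geq N_0$, some $h>n$ such that $G_h:=\{x\in B: h\text{ is a }\sigma'\text{-hyperbolic time of }x\}$ satisfies $\mu(G_h)\geq\theta\mu(B)/2$. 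Extracting a maximal $(h,2\delta)$-separated set $E\subset G_h$, maximality and the Brin--Katok bound give
$$|E|\geq\frac{\mu(G_h)}{\sup_{x\in E}\mu(B(x,h,2\delta))}\geq\tfrac{\theta\mu(B)}{2}\,e^{h(h_\mu(f)-\eta)},$$
while separation makes the dynamical balls $\{B(x,h,\delta):x\in E\}$ pairwise disjoint; the choice of $\delta$ and condition on $\sigma_n(x)$ ensure $B(x,h,\delta)\subset A_{\varepsilon,h}(\mu)$ for each $x\in E$.

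The technical core is a $C^1$ volume estimate on each $B(x,h,\delta)$. By Proposition~\ref{prop3.1}, $f^h:B(x,h,\delta)\to B(f^h(x),\delta)$ is a diffeomorphism with $d(f^{h-k}(y),f^{h-k}(x))\leq(\sigma')^{k/2}\delta$ for $y\in B(x,h,\delta)$, so letting $\omega$ be a modulus of uniform continuity of $\log|\det Df|$ on $M$, telescoping yields
$$\bigl|\log|\det Df^h(y)|-\log|\det Df^h(x)|\bigr|\leq\sum_{k=1}^{h}\omega((\sigma')^{k/2}\delta)=:T_h.$$
The series defining $T_h$ need not converge --- this is precisely where the $C^1$ case differs from $C^{1+\alpha}$ --- but $T_h/h\to 0$ since each summand tends to $0$. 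Change of variables then gives $\Leb(B(x,h,\delta))\geq c(\delta)\,e^{-T_h}\,e^{h(\int\psi\,d\mu-\eta)}$ with $c(\delta):=\inf_z\Leb(B(z,\delta))>0$, and summing over the disjoint family indexed by $E$,
$$\Leb(A_{\varepsilon,h}(\mu))\geq\tfrac{\theta\mu(B)}{2}\,c(\delta)\,e^{-T_h}\,e^{h(h_\mu(f)+\int\psi\,d\mu-2\eta)}.$$
Taking $\log$, dividing by $h$, using $T_h/h\to 0$ together with $h>n\to\infty$, then letting $\eta\to 0$ at fixed $\varepsilon$, and finally $\varepsilon\to 0^+$, yields~(\ref{desigualdade 1}). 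The main obstacle is precisely controlling $T_h$ in the $C^1$ category: without H\"older regularity one cannot bound $T_h$ uniformly in $h$, but the sublinear growth $T_h=o(h)$ extracted from uniform continuity of $\log|\det Df|$ and the exponential backwards contraction at hyperbolic times is exactly what allows distortion to be absorbed into the exponential rate.
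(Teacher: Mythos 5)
Your proof is correct and gives a genuinely different, more self-contained argument than the paper's. The paper factors the estimate through the quantity $h_{\Leb}(f,\mu)=\mu\text{-}\operatorname{ess\,sup}\,h_{\Leb}(f,x)$ defined in~\eqref{def-de-entropia-via-supremo-essencial}: it first proves $\liminf_n\frac1n\log\Leb(A_{\varepsilon,n}(\mu))\geq h_\mu(f)-h_{\Leb}(f,\mu)$ via Katok's spanning/separated-set formula for entropy together with the Lebesgue local entropy of $\mu$-generic points, and then separately shows $h_{\Leb}(f,\mu)\leq -\int\psi\,d\mu+\varepsilon/4$ using the Jacobian estimate on dynamical balls at hyperbolic times, with a crude per-step distortion bound of $\varepsilon/4$ coming from uniform continuity of $\psi$ on the (undeformed) dynamical ball. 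You bypass $h_{\Leb}(f,\mu)$ entirely and prove a direct volume lemma: you pick a maximal $(h,2\delta)$-separated set inside a hyperbolic-time set $G_h$ of definite $\mu$-measure (via Lemma~\ref{lemma3.3}), use the Brin--Katok local entropy formula for $\mu$ to lower-bound the cardinality, and lower-bound each $\Leb(B(x,h,\delta))$ by a change of variables together with the telescoping distortion sum $T_h=\sum_{k\leq h}\omega((\sigma')^{k/2}\delta)$, which exploits the exponential backward contraction at hyperbolic times to get $T_h=o(h)$ from mere uniform continuity (the sharper observation in the $C^1$ category). Each approach has its merits: the paper's version gives the stronger $\liminf$ lower bound and makes the object $h_{\Leb}(f,\mu)$ explicit, which it reuses elsewhere (cf.\ the hypothesis of Corollary~\ref{mthm:pressao-nao-negativa-implica-caracterizacao-dos-estados-de-equilibrio}); yours is shorter, makes the sublinear-distortion mechanism transparent, and requires only the $\limsup$ conclusion stated.
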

\begin{proof}


  Since $\mu$ is an ergodic probability measure, we have
  $\lim_{n\rightarrow+\infty}\sigma_n(x) = \mu$ for
  $\mu$-a.e. $x\in M$. So for $\mu$-a.e. $x\in M$, there
  exists $N(x)\geq 1$ such that
  $\dist(\sigma_n(x),\mu)<\varepsilon/4$
  $\forall n\geq N(x).$

  Given $\varepsilon>0$ and any natural value of $N\geq 1$,
  define the set
\begin{eqnarray}\label{conj-BN}
B_N := \{x\in M: \  \dist(\sigma_n(x),\mu) < \varepsilon/4 \ \ \forall n\geq N\}.
\end{eqnarray}

Consider $\delta_1>0$ such that for each $\sigma$-hyperbolic
time $h\geq1$ time for $x$, $f^h|_{B(x,h,\delta_1)}$ maps
$B(x,h,\delta_1)$ diffeomorphically to the ball of radius
$\delta_1$ around $f^h(x)$.

Fix $\delta>0$ such that Lemma \ref{lm1} holds with
$\varepsilon/8$ in the place of $\varepsilon$ and fix
$\xi>0$ satisfying
$|\psi(x)-\psi(y)|<\frac{\varepsilon }{4}$ if $d(x,y)<\xi$.

Consider $0<\gamma_0<\min\{\xi,
\delta,\delta_1/2\}$ 
and let $N(n, \gamma, b)$ be the minimum number of points
needed to $(n, \gamma)$-span a set of $\mu$-measure $b$ (see
(\ref{def-entropia-do-conj-K})). Choose
$0 < \gamma_1 < \gamma_0$ such that
\begin{eqnarray}\label{def-de-entropia}
\liminf_{n\rightarrow+\infty}\frac{1}{n}\log
  N(n,4\gamma,1/2)
  \geq
  h_{\mu}(f)-\frac{\epsilon}{2}, \quad \forall\gamma < \gamma_1.
\end{eqnarray}
Set
$A=\left\{x\in M: \
  \limsup_{n\rightarrow+\infty}-\frac{1}{n}\log
  \Leb(B(x,n,\gamma_2))\leq
  h_{\Leb}(f,\mu)+\frac{\varepsilon}{4}\right\}$ where
$0<\gamma_2\leq\gamma_1$ is such that
\begin{eqnarray}\label{conj-CN}
  \mu(A)>2/3.
\end{eqnarray}
This is possible by definition of $h_{\Leb}(f,\mu)$ (see
(\ref{def-de-entropia-via-supremo-essencial})). We have
implicitly assumed that $h_{\Leb}(f,\mu)<\infty$ here. If
$h_{\Leb}(f,\mu)=\infty$, then there is nothing to prove
since $h_{\mu}(f) < h_{top}(f) <\infty$ because $f$ is a
$C^1$ local diffeomorphism.

We note that $B_N\subset B_{N+1}$ and $\mu(\cup B_N) =
1$. So there exists $N\geq 1$ such that
$\mu(B_{N})\geq \frac{5}{6}$. If $C_N:=A\cap B_N$, then
$\mu(C_{N})\geq \frac{1}{2}$ and for all $ x\in C_N$ and
$n\geq N(x)$ we have:

\begin{enumerate}
  \item [(1)] $B(x,n,\gamma_2)\subset A_{\varepsilon,n}(\mu)$;
  \item [(2)] $\Leb(B(x,n,\gamma_2)\geq e^{-(h_{\Leb}(f,\mu)+\varepsilon/2)n}$.
\end{enumerate}

Note that $(2)$ immediately follows from
(\ref{conj-CN}). Moreover, $(1)$ holds
because, 
given $y\in B(x,n,\gamma)$ then $d(f^j(y),f^j(x))<\gamma$
for all $j=0,\ldots,n-1$. By Lemma \ref{lm1} we have
$\dist(\sigma_{n}(y),\sigma_{n}(x))<\frac{\varepsilon}{8}$. Since
$x\in B_N$, by the triangular inequality

$$\dist(\sigma_{n}(y),\mu)\leq\dist(\sigma_{n}(y),\sigma_{n}(x))+\dist(\sigma_{n}(x),\mu)< \varepsilon.$$
Therefore, $y\in A_{\varepsilon,n}(\mu)$.

For each $n$, let $E_n=E_n(2\gamma_2)$ be a maximal
$(n, 2\gamma_2)$-separated subset of points contained in
$C_N$. Then
$\bigcup\limits_{x\in E_n}B(x,4\gamma_2,n)\supset C_N$ by
maximality of $E_n$ and so $\#E_n\geq N(n, 4\gamma_2,
1/2)$. Also, given $x,y\in E_n$, $x \neq y$ then
$B(x, n ,\gamma_2) \cap B(y, n, \gamma_2) =
\emptyset$. Thus,
\begin{eqnarray*}
\liminf\limits_{n\rightarrow+\infty}\frac{1}{n}\log \Leb(A_{\varepsilon,n}(\mu))&\geq& \liminf\limits_{n\rightarrow+\infty}\frac{1}{n}\log\sum\limits_{x\in E_n}\Leb(B(x, n ,\gamma_2))\\
&\geq& \liminf\limits_{n\rightarrow+\infty}\frac{1}{n}\log\left(\#{E_n}\cdot e^{-(h_{\Leb}(f,\mu)+\varepsilon/2)n}\right)\\
&\geq& \liminf\limits_{n\rightarrow+\infty}\frac{1}{n}\log N(n,4\gamma_2,1/2) -h_{\Leb}(f,\mu)-\varepsilon/2.
\end{eqnarray*}
Thus, by (\ref{def-de-entropia}) we have,
\begin{eqnarray}\label{desigualdade-a}
\liminf\limits_{n\rightarrow+\infty}\frac{1}{n}\log \Leb(A_{\varepsilon,n}(\mu))\geq h_{\mu}(f)-h_{\Leb}(f,\mu).
\end{eqnarray}
Moreover, since
$\mu$ is expanding, then there exist
$\theta>0$ and $\sigma\in(0,1)$ such that $\mu$-a.e. $x\in
M$ has positive frequency
$\geq\theta$ of $\sigma$-hyperbolic times of
$f$. Let $\tilde{M}\subset M$ be such that for all $x\in
\tilde{M}$,
$\lim\limits_{n\rightarrow+\infty}\sigma_n(x)=\mu$ and
$f^h|_{B(x,h,\gamma_2)}$ maps
$B(x,h,\gamma_2)$ diffeomorphically to the ball of radius
$\gamma_2$ around $f^h(x)$, for $h$ a
$\sigma$-hyperbolic time for $x$.
We observe that
$|\psi(f^j(y))-\psi(f^j(z))|<\frac{\varepsilon
}{4}$ for all $z\in
B(y,h,\gamma_2/2)$, since
$d(f^jx,f^jy)\leq\gamma_2/2$ for all $j=0,\ldots,h-1$.

Hence, by uniform continuity of
$\psi$ and because Lebesgue measure assigns mass uniformly
bounded away from zero to balls of fixed radius, we obtain
\begin{eqnarray*}
  0<\alpha(\gamma_2)&\leq&\Leb(B(f^{h}x,\gamma_2))=\int_{B(x,h,\gamma_2)}|\det Df^{h}|\, d\Leb\\
                    &\leq& \int_{B(x,h,\gamma_2)}e^{-S_{h}\psi}\, d\Leb \leq e^{-S_{h}\psi(x)+h\varepsilon/4} \cdot \Leb(B(x,h,\gamma_2)).
\end{eqnarray*}
Thus
\begin{eqnarray*}
0&=& \liminf\limits_{h\rightarrow+\infty}\frac{1}{h}\log\alpha(\gamma_2/2)\leq\liminf\limits_{h\rightarrow+\infty}-\frac{1}{h}S_{h}\psi(x)+\frac{\varepsilon}{4} +  \liminf\limits_{h\rightarrow+\infty}\frac{1}{h}\log\Leb(B(x,h,\gamma_2))\\
&\leq&-\limsup\limits_{h\rightarrow+\infty}\int\psi d\sigma_{h}(x)+\frac{\varepsilon}{4}-  \limsup\limits_{h\rightarrow+\infty}-\frac{1}{h}\log\Leb(B(x,h,\gamma_2))\\
&=&-\int\psi d\mu+\frac{\varepsilon}{4} -h_{\Leb}(f,x).
\end{eqnarray*}
Therefore, $h_{\Leb}(f,x)\leq -\int\psi
d\mu+\frac{\varepsilon}{4}$ for $\mu$-a.e $x\in
M$. Hence $h_{\Leb}(f,\mu)\leq-\int\psi
d\mu+\frac{\varepsilon}{4}$ and by (\ref{desigualdade-a})
$$\lim_{\varepsilon\rightarrow0^+}\limsup_{n\rightarrow+\infty}\frac{1}{n}\log \Leb(A_{\varepsilon,n}(\mu))\geq h_{\mu}(f)+\int\psi d\mu,$$
which gives the desired inequality.
\end{proof}

\begin{proposition}\label{corolario:medida-ergodica-expansora-com-pressao-nao-negativa-eh-SRB-like}
  Let $f:M\rightarrow M$ be a $C^1$ local
  diffeomorphism. Every expanding ergodic $f$-invariant
  probability measure $\mu$ such that
  $\int\psi d\mu+h_{\mu}(f)\geq0$ is a weak-$SRB$-like
  probability measure.
\end{proposition}

\begin{proof}
  Let $\mu\in\SM_f$ be a expanding ergodic probability
  measure such that $\int\psi d\mu+h_{\mu}(f)\geq0$. By
  Theorem~\ref{teo*}, we have that
$$\lim\limits_{\varepsilon\rightarrow0^+}\limsup\limits_{n\rightarrow+\infty}\frac{\log(\Leb(A_{\varepsilon,n}(\mu)))}{n}\geq h_{\mu}(f)+\int\psi d\mu\geq 0.$$

Moreover, if $\varepsilon_1 < \varepsilon_2$ then
$A_{\varepsilon_1, n}(\mu)\subset A_{\varepsilon_2,
  n}(\mu)$. So
$\limsup_{n\rightarrow+\infty}\frac{\log(\Leb(A_{\varepsilon,n}(\mu)))}{n}$
is increasing with $\varepsilon > 0$. Thus
$\limsup_{n\rightarrow+\infty}\frac{\log(\Leb(A_{\varepsilon,n}(\mu)))}{n}\geq0$
for all $\varepsilon>0$. But since $\Leb$ is a probability
measure, we conclude that
$$\limsup\limits_{n\rightarrow+\infty}\frac{\log(\Leb(A_{\varepsilon,n}(\mu)))}{n}=0,
\quad\forall \varepsilon>0.$$
Then $\mu$ is a weak-SRB-like measure.
\end{proof}

\begin{remark}\label{rmkZ}
  The statement of Theorem~\ref{teo*} still holds if we
  replace $f$ by an open distance expanding and
  topologically transitive map $T$ of a compact metric space
  $X$; $\Leb$ by a $\phi$-conformal measure $\nu$ for some
  continuous potential $\phi:X\rightarrow\RR$ with
  $\mathcal{L}_{\phi}^*(\nu)=\lambda\nu$ for some
  $\lambda>0$. So we get
$$\lim\limits_{\varepsilon\rightarrow0^+}\limsup\limits_{n\rightarrow+\infty}\frac{1}{n}\log \nu(A_{\varepsilon,n}(\mu))\geq h_{\mu}(f)+\int\psi d\mu-\log\lambda,$$
since, in the proof of Theorem~\ref{teo*}, we used that
$\Leb$ is $\psi$-conformal together with general results
from Ergodic Theory. Analogously for Proposition
\ref{corolario:medida-ergodica-expansora-com-pressao-nao-negativa-eh-SRB-like}.
\end{remark}

\begin{proposition}\label{corolario:toda-medida-ergodica-expansora-que-satisfaz-a-formula-de-Pesin-eh-SRB-like}
  Let $f:M\rightarrow M$ be a $C^1$ local
  diffeomorphism. Every expanding ergodic $f$-invariant
  probability measure that satisfies Pesin's Entropy Formula
  is a weak-$SRB$-like probability measure.
\end{proposition}

\begin{proof}
  Let $\mu\in\SM_f$ be an expanding ergodic probability
  measure such that $h_{\mu}(f)=\int\sum^+d\mu$, where
  $\sum^+(x)$ is the sum of the positive Lyapunov exponents
  of a $\mu$-generic point $x$ counting multiplicities.

  Since $\mu$ is an expanding probability measure, we deduce
  by the Multiplicative Ergodic that
  $-\int\psi d\mu\leq\int\sum^+d\mu=h_{\mu}(f)$. Therefore,
  $h_{\mu}(f)+\int\psi d\mu\geq0$ and the result follows
  from Proposition
  \ref{corolario:medida-ergodica-expansora-com-pressao-nao-negativa-eh-SRB-like}.
\end{proof}

\subsection{Ergodic expanding weak-SRB-like measures}
\label{sec:ergodic-expand-srb}

Now we are ready to prove Corollary
\ref{mthm:pressao-top-zero-implica-existencia-de-medida-ergodica}.

\begin{proof}[Proof of Corollary~\ref{mthm:pressao-top-zero-implica-existencia-de-medida-ergodica}]
  Since $P_{\textrm{top}}(f,\psi)=0$, we conclude by
  Proposition~\ref{teorema:as-medidas-SRB-like-tem-pressao-nao-negativa}
  that all weak-SRB-like measures are equilibrium states for
  the potential $\psi$, that is,
  $h_{\mu}(f)+\int\psi d\mu=0$ for all $\mu\in\SW_f^*$.

  We know that there exists $\sigma\in(0,1)$ such that
  $\Leb(H(\sigma))=1$. Given $\mu\in\SW_f$, by Lemma
  \ref{lema-simples1} we have that
  $\int \log\|(Df)^{-1}\|d\mu\leq\log\sqrt{\sigma}$.

  By the Ergodic Decomposition Theorem, there exists
  $A\subset M$ such that $\mu(A)>0$ and for all $y\in A$,
  $\int\log\|(Df)^{-1}\|d\mu_y\leq\log\sqrt{\sigma}$, where
  $\mu_y$ is an ergodic component of $\mu$.

  Fix $y_0\in A$. By Birkhoff's Ergodic
  Theorem,
  $$\lim\limits_{n\rightarrow+\infty}\frac{1}{n}\sum\limits_{j=0}^{n-1}\log\|Df(f^j(y))^{-1}\|=\int\log\|(Df)^{-1}\|d\mu_{y_0}\leq\log\sqrt{\sigma},$$
for $\mu_{y_0}$-a.e. $y\in M$. Therefore $\mu_{y_0}$ is an
expanding probability measure.


Since $P_{\textrm{top}}(f,\psi)=0$ and
$h_{\mu}(f)+\int\psi d\mu=0$ we conclude that
$h_{\mu_{y}}(f)+\int\psi d\mu_{y}=0$ for $\mu$-a.e $x\in
M$. In particular, by
Corollary~\ref{corolario:medida-ergodica-expansora-com-pressao-nao-negativa-eh-SRB-like}
we conclude that there exist $y_0\in A$ such that
$\mu_{y_0}\in\SW_f^*$ showing that there are ergodic
expanding weak-SRB-like probability measures which satisfy
Pesin's Entropy Formula, completing the proof of Corollary
\ref{mthm:pressao-top-zero-implica-existencia-de-medida-ergodica}.
\end{proof}

\section{ Weak-Expanding non-uniformly expanding maps}
\label{sec:weak-expanding}

In this section we reformulate Pesin's Entropy Formula for a
class of weak-expanding and non-uniformly expanding maps
with $C^1$ regularity and prove Corollary \ref{mthm:WE}.

\subsection{Weak-SRB-like, equilibrium and expanding
  measures}
We divide the proof of Corollary \ref{mthm:WE} into the next
two results below.

\begin{proposition}\label{corolario:medidas SRB-like sao
    estados de equilibrio p/ WE}
  Let $f:M \rightarrow M$ be weak-expanding and
  non-uniformly expanding. Then, all (necessarily existing)
  weak-SRB-like probability measures are $\psi$-equilibrium
  states and, in particular, satisfy Pesin's Entropy
  Formula. Moreover, there exists some ergodic weak-SRB-like
  probability measure.
\end{proposition}

\begin{proof}
  Let $f:M\rightarrow M$ be as in statement of Corollary
  \ref{mthm:WE}. Then, for every $x\in M$ and all
  $v\in T_xM\backslash\{0\}$ we have
  $\liminf_{n\rightarrow+\infty}\frac{1}{n}\log\|Df^n(x)\cdot
  v\|\geq0$. Thus, the Lyapunov exponents for any given
  $f-$invariant probability measure $\mu$ are
  non-negative. Hence
  $\Sigma^+(x)=\lim_{n\rightarrow+\infty}\frac{1}{n}\log|\det
  Df^n(x)|$ and
  $\int \Sigma^+ d\mu=\int\log|\det Df|\, d\mu=-\int\psi\,
  d\mu.$

  For $C^1$-systems, Ruelle's Inequality \cite{Ru78} ensures
  $h_{\mu}(f)\leq\int\Sigma^+ d\mu$, so
  $h_{\mu}(f)+\int\psi d\mu\leq0$. By Proposition
  \ref{teorema:as-medidas-SRB-like-tem-pressao-nao-negativa},
  we have that $P_{\textrm{top}}(f,\psi)=0$ and
  $ h_{\mu}(f)+\int\psi d\mu=0$ for all $\mu\in\SW_f^*$.

  Therefore, all weak-SRB-like probability measures are
  $\psi$-equilibrium states and satisfy Pesin's Entropy
  Formula. Moreover, by Corollary
  \ref{mthm:pressao-top-zero-implica-existencia-de-medida-ergodica},
  we conclude that there exist ergodic weak-SRB-like
  measures.
\end{proof}

Here we obtain a sufficient condition to guarantee that all
$\psi$-equilibrium states are generalized convex linear
combinations of weak-SRB-like measures.

\begin{proposition}\label{corolario2}
  Let $f:M \rightarrow M$ be weak-expanding and
  non-uniformly expanding. If $\psi<0$, then there is no atomic
  weak-SRB-like probability measure. Moreover, if
  $\SD=\{x\in M; \|Df(x)^{-1}\|=1\}$ is finite and $\psi<0$,
  then almost all ergodic components of a $\psi$-equilibrium
  state are weak-SRB-like measures and all weak-SRB-like
  probability measures $\mu$ have ergodic components $\mu_x$
  which are expanding weak-SRB-like probability measures for
  $\mu$-a.e. $x\in M\backslash \SD$.
\end{proposition}

\begin{proof}
  Note that, if $\psi<0$, then $\int\psi d\mu<0$ for all
  $\mu\in\SM_f$. On the other hand, if $\mu\in\SM_f$ is an
  atomic invariant probability measure, then
  $h_{\mu}(f)=0$. Therefore $\mu$ does not satisfy Pesin’s
  Entropy Formula and, by Proposition~\ref{corolario:medidas
    SRB-like sao estados de equilibrio p/ WE}, we conclude
  that there is no atomic weak-SRB-like probability measure.

  Since $\int\psi d\mu<0$ and $h_{\mu}(f)\leq-\int\psi d\mu$
  for all $\mu\in\SM_f$, then a $\psi$-equilibrium state
  satisfies $h_{\mu}(f)+\int\psi d\mu=0$ and so
  $h_{\mu_x}(f)+\int\psi d\mu_x=0$, $\mu$-a.e. $x$ by the
  Ergodic Decomposition Theorem. Thus $h_{\mu_x}(f)>0$.

  Hence $\mu_x$ is non-atomic and thus expanding because
  $\int\log\|(Df)^{-1}\|d\mu_x<0$ and $\mu_x$ is ergodic,
  for $\mu$-a.e $x$. Such $\mu_x$ also satisfy the Entropy
  Formula. Therefore Proposition
  \ref{corolario:toda-medida-ergodica-expansora-que-satisfaz-a-formula-de-Pesin-eh-SRB-like}
  ensures that $\mu_x$ is weak-SRB-like for $\mu$-a.e. $x$.

  Suppose now that $\SD$ is finite and consider
  $\mu\in\SW_f^*$. Then $\int\log\|(Df)^{-1}\|d\mu<0$, for
  otherwise, we would have $\supp(\mu)\subset \SD$ so that
  $\mu$ is purely atomic, which is a contradiction with the
  previous conclusions.  Because $\mu$ is a
  $\psi$-equilibrium state, then
  $h_{\mu}(f)+\int\psi d\mu=0$. Moreover,
  $0>\int\log\|(Df)^{-1}\|d\mu=\int_{M\backslash\SD}\log\|(Df)^{-1}\|d\mu$
  thus, by the Ergodic Decomposition Theorem, we conclude
  that for $\mu$-a.e $x\in M\backslash\SD$ we have
  $h_{\mu_{x}}(f)+\int\psi d\mu_{x}=0$ (remember that
  $P_{top}(f,\psi)=0$) and
  $\int\log\|(Df)^{-1}\|d\mu_x<0$.

  Therefore $\mu$-a.e $x\in M\backslash\SD$ has expanding
  ergodic components which are $\psi$-equilibrium states. By
  Proposition
  \ref{corolario:medida-ergodica-expansora-com-pressao-nao-negativa-eh-SRB-like},
  we deduce that $\mu_{x}$ is a weak-SRB-like probability
  measure for $\mu$-a.e $x\in M\backslash\SD$. The proof is
  complete.
\end{proof}

Putting Propositions~\ref{corolario:medidas SRB-like sao
  estados de equilibrio p/ WE} and~\ref{corolario2} together
we complete the proof of Corollary~\ref{mthm:WE}.

\subsection{Expanding Case}
\label{sec:expanding-case}

Corollary \ref{mthm:Expansora} improves the main result of
\cite{CatsEnrich2012} and allows rewriting all the results
from \cite{CatsEnrich2012}, which were only proved for
$C^1$-expanding maps in circle. In this section we prove
Corollaries \ref{mthm:Expansora} and
\ref{mthm:estabilidade-estatistica}.

\begin{proof}[Proof of Corollary \ref{mthm:Expansora}]
  The assumptions on $f$ ensure that all $f$-invariant
  probability measures $\mu$ are expanding. Moreover, by
  Proposition
  \ref{teorema:as-medidas-SRB-like-tem-pressao-nao-negativa}
  and Ruelle's Inequality, we conclude that
  $P_{top}(f,\psi)=0$ and all weak-SRB-like probability
  measures are $\psi$-equilibrium states satisfying Pesin's
  Entropy Formula.

  Then, on the one hand, if $\mu\in\SM_f$ satisfies Pesin's
  Entropy Formula, then $h_{\mu}(f)+\int\psi d\mu=0$. By the
  Ergodic Decomposition Theorem, we have that
  $h_{\mu_{x}}(f)+\int\psi d\mu_{x}=0$ for $\mu$-a.e
  $x\in M$, because $P_{\textrm{top}}(f,\psi)=0$. By
  Proposition
  \ref{corolario:medida-ergodica-expansora-com-pressao-nao-negativa-eh-SRB-like},
  we have that $\mu_{x}$ is a weak-SRB-like probability
  measure for $\mu$-a.e $x\in M$.

  On the other hand, if $\mu\in\SM_f$ is such that its
  ergodic components $\mu_x$ are weak-SRB-like probability
  measures for $\mu$-a.e. $x\in M$, then
  $h_{\mu_x}(f)+\int\psi d\mu_x=0$ for $\mu$-a.e $x\in
  M$. Thus, by the Ergodic Decomposition Theorem we have
  that
$$-\int\psi d\mu=-\int\left(\int\psi
  d\mu_x\right)d\mu(x)=\int\left(
  h_{\mu_x}(f)\right)d\mu(x)=h_{\mu}(f).$$

Assume now that $\mu$ is the unique weak-SRB-like
probability measure. By \cite[item G of Theorem
1]{catsigeras2016weak} we have that $\mu$ is SRB and
$\Leb(B(\mu))=1$.

This finishes the proof of Corollary \ref{mthm:Expansora},
except for the large deviations result which will be
obtained together with the proof of
Corollary~\ref{mthm:pressao-nao-negativa-implica-caracterizacao-dos-estados-de-equilibrio}
in Subsection \ref{sec:proof-coroll-refmthm}.
\end{proof}

As observed in \cite{CatsEnrich2012}, the $SRB$-like
condition is a sufficient but not necessary condition for a
measure $\mu$ to be an equilibrium state for the potential
$\psi$, because it may exist a non-ergodic invariant measure
$\mu\ll\Leb$ that is neither $SRB$ nor $SRB$-like (see
\cite{ANQuas}). In such a case $\mu$ satisfies Pesin’s
Entropy Formula, as follows.

\begin{proposition}\label{cor5}
  Let $f:M \rightarrow M$ be a $C^1$-expanding map. Let
  $\mu$ be a non-ergodic $f$-invariant probability such that
  $\mu\ll\Leb$. Then $\mu$ satisfies Pesin’s Entropy Formula.
\end{proposition}

\begin{proof}
  See \cite[Corollary 2.6]{CatsEnrich2012}.
\end{proof}

Now we are ready to prove Corollary
\ref{mthm:estabilidade-estatistica}.

\begin{proof}[Proof of Corollary
  \ref{mthm:estabilidade-estatistica}]
  Let $f_n\xrightarrow[n\rightarrow+\infty]{}f$ in the
  $C^1$-topology, where $f_n,f:M\to M$ are $C^1$-expanding
  maps for all $n\geq1$. For each $n\geq1$ consider $\mu_n$
  weak-SRB-like measures associated to $f_n$ and let $\mu$
  be a weak$^*$ limit point:
  $\mu=\lim_{j\rightarrow+\infty}\mu_{n_j}$.

  Fix $\SP$ a generating partition for every $f_{n_j}$ for
  all $j\geq1$ and such that $\mu(\partial\SP) = 0$.  This
  is possible, since $f_{n_j}$ is $C^1$-expanding and
  $f_n\xrightarrow[]{}f$ in $C^1$-topology and $f$ is also
  $C^1$-expanding.

  By the Kolmogorov-Sinai Theorem, this implies that
  $h_{\mu_n}(f_n)=h(\SP,\mu_n)$ and $h_{\mu}(f)=h(\SP,\mu)$,
  that is,
$$h_{\mu_{n_j}}(f_{n_j})=\inf\limits_{k\geq1}\frac{1}{k}H(\SP_{n_j}^k,\mu_{n_j}) \ \ \textrm{and} \ \ h_{\mu}(f)=\inf\limits_{k\geq1}\frac{1}{k}H(\SP^k,\mu)$$
Since $\mu$ gives zero measure to the boundary of $\SP$ then
$H(\SP_{n_j}^k,\mu_{n_j})$ converge to $H(\SP^k,\mu)$ as
$j\rightarrow\infty$.  Furthermore, for every
$\varepsilon>0$ there is $n_0\geq 1$ such that
$$
h_{\mu_{n_j}}(f_{n_j})\leq\frac{1}{n_0}H(\SP_{n_j}^{n_0},\mu_{n_j})\leq \frac{1}{n_0}H(\SP^{n_0},\mu)+\varepsilon\leq h_{\mu}(f)+2\varepsilon.
$$
By Corollary \ref{mthm:Expansora},
$h_{\mu_{n_j}}(f_{n_j})+\int\psi_{n_j} d\mu_{n_j}=0$ for all
$j\geq 0$, since $\mu_{n_j}$ is a weak-SRB-like probability
measure and $\psi_{n_j}=-\log|\det Df_{n_j}|$.

Since $\psi_{n_j}\rightarrow \psi$ in the topology of
uniform converge, we have that
$\int\psi_{n_j} d\mu_{n_j}\rightarrow\int\psi d\mu$. By
Ruelle's inequality, $h_{\nu}(f)+\int\psi d\nu\leq0$ for any
$f$-invariant probability measure $\nu$ on the Borel
$\sigma$-algebra of $M$. Thus,
$$0\geq h_{\mu}(f)+\int\psi d\mu\geq\limsup_{n\rightarrow+\infty}\left(h_{\mu_n}(f_n)+\int\psi_n d\mu_n\right)=0.$$
This shows that $\mu$ satisfies Pesin's Entropy Formula, is
a $\psi$-equilibrium state since
$P_{\textrm{top}}(f,\psi)=0$ and, by Corollary
\ref{mthm:Expansora}, its ergodic components $\mu_x$ are
weak-SRB-like probability measures for $\mu$-a.e $x\in M$.
\end{proof}

\subsection{Proof of Corollary
  \ref{mthm:pressao-nao-negativa-implica-caracterizacao-dos-estados-de-equilibrio}}
\label{sec:proof-coroll-refmthm}

We are now ready to prove Corollary
\ref{mthm:pressao-nao-negativa-implica-caracterizacao-dos-estados-de-equilibrio}.

\begin{proof}[Proof of Corollary
  \ref{mthm:pressao-nao-negativa-implica-caracterizacao-dos-estados-de-equilibrio}]
  From Remarks~\ref{rmkX} and~\ref{rmkY}, we can use
  Proposition
  \ref{teorema:as-medidas-SRB-like-tem-pressao-nao-negativa},
  Theorem~\ref{teo*} and Proposition
  \ref{corolario:medida-ergodica-expansora-com-pressao-nao-negativa-eh-SRB-like}
  in the setting of Theorem
  \ref{mthm:Toda-medida-nu-SRB-like-e-estado-de-equilibrio}. Let
  $\mu\in\SM_T$ be such that
\begin{eqnarray*}
  h_{\mu}(T)+\int\phi\, d\mu
  =
  P_{\textrm{top}}(T,\phi)
  =
  \int\left(h_{\mu_x}(T)+\int\phi\, d\mu_x\right)\,d\mu(x).
\end{eqnarray*}
Since we also have
$h_{\mu_x}(T)+\int\phi\,d\mu_x\leq P_{\textrm{top}}(T,\phi)$
then $h_{\mu_x}(T)+\int\phi\,d\mu_x=P_{\textrm{top}}(T,\phi)$
for $\mu$-a.e $x$. Now from Theorem \ref{teo*} (see Remark
\ref{rmkZ}) we get
$$\lim_{\epsilon\to0^+}\limsup_{n\to+\infty}
\frac{1}{n}\log\nu(A_{\epsilon,n}(\mu_x))
\geq h_{\mu_x}(T)+\int\phi\,d\mu_x
-\log\lambda
$$
and then we conclude that $\mu_x\in\SW_T^*(\nu)$ following
the same arguments in the proof of Proposition
\ref{corolario:medida-ergodica-expansora-com-pressao-nao-negativa-eh-SRB-like}.

Assume now that $\mu$ is the unique $\phi$-equilibrium
state. By Proposition \ref{proposition:Wf-compacto} and
Theorem
\ref{mthm:Toda-medida-nu-SRB-like-e-estado-de-equilibrio}, we
conclude that there exist a unique $\nu$-SRB-like measure
$\mu$ and, by \cite[Theorem 1.6]{CatsEnrich2011}, it follows
that $\mu$ is $\nu$-SRB and $\nu(B(\mu))=1$.

Let now $\SV$ be a small neighborhood of $\mu$ in
$\SM$. Since $\{\SK_r(\phi)\}_r$ is decreasing with $r$ and
$\{\mu\}=\SK_0(\phi)=\cap_{r>0}\SK_r(\phi)$, we have that
there exists $r_0>0$ such that $\SK_r(\phi)\subset\SV$ for
all $0<r<r_0$. Since $\mathcal{K}_r(\phi)$ is weak$^*$
compact (by upper semicontinuity of the metric entropy) we
have
$\mathcal{K}_r(\phi)=\bigcap_{\varepsilon>0}\mathcal{K}^{\varepsilon}_r(\phi)$,
where
$\mathcal{K}^{\varepsilon}_r(\phi)=\big\{\mu\in\mathcal{M}_T:
\ \dist(\mu,\mathcal{K}_r(\phi))\leq \varepsilon\big\}$ with
the weak$^*$ distance defined in \eqref{eq13}. Let us take
$0<\varepsilon<r_0$ such that
$\mathcal{K}^{\varepsilon}_r(\phi)\subset\SV$. By
Proposition \ref{Lm:Large deviation lemma for distance
  expanding}, there exists $n_0\geq1$ and
$\kappa=\kappa(\varepsilon,r)>0$ such that
\begin{eqnarray*}
  \nu(\{x\in X: 
  \sigma_n(x)\in\SM\backslash\SV\})
  &\leq&
         \nu(\{x\in X:
         \sigma_n(x)\in\SM\backslash\SK_r^{\varepsilon}(\phi)\})
  \\
  &=&
      \nu(\{x\in X:
      \dist(\sigma_n(x),\SK_r(\phi))\geq\varepsilon\})
      < \kappa e^{n(\varepsilon-r)},
\end{eqnarray*}
for all $n\geq n_0$. Thus
$\limsup_{n\rightarrow+\infty}\frac{1}{n}\log\nu(\{x\in X: \
\sigma_n(x)\in\SM\backslash\SV\})< \varepsilon-r$. As
$\varepsilon>0$ can be taken arbitrary small, we conclude
that
$\limsup_{n\rightarrow+\infty}\frac{1}{n}\log\nu(\{x\in X: \
\sigma_n(x)\in\SM\backslash\SV\})<-r$ for all $0<r<r_0$
and $r_0=I(\SV):=\sup\{r>0; \
\SK_r(\phi)\subset\SV\}$. Therefore
$$\limsup\limits_{n\rightarrow+\infty}\frac{1}{n}\log\nu(\{x\in X: \ \sigma_n(x)\in\SM\backslash\SV\})\leq-I(\SV).$$

This shows that
$\nu(\{x\in X: \ \sigma_n(x)\in\SM\backslash\SV\})$
decreases exponentially fast with $n$ at a rate that depends
on the ``size'' of $\SV$.

The assumptions on $\mu$ are the same as in Corollary
\ref{mthm:Expansora} with $\nu = \Leb$ and $\phi =\psi$, so
the upper large deviations statement of this corollary
follows with the same proof.
\end{proof}


 \def\cprime{$'$}

 \bibliographystyle{abbrv}

\end{document}